\tikzset{node distance=3cm, auto}
\newtheorem{theorem}{Theorem}[section]
\newtheorem{proposition}[theorem]{Proposition}
\newtheorem{definition}[theorem]{Definition}
\newtheorem{corollary}[theorem]{Corollary}
\newtheorem{lemma}[theorem]{Lemma}
\def\A{\mathcal{A}}
\def\HL{\mathcal{H}L_0}
\def\Hl{\mathcal{H}l_0}
\def\G{\mathcal{G}_0}
\def\L{\mathcal{L}}
\def\B{\mathcal{B}}
\def\H{\mathcal{H}}
\def\F{\mathcal{F}}
\def\K{\mathcal{K}}
\def\QN{\mathcal{QN}}
\def\W{\mathcal{W}}
\def\N{\mathcal{N}}
\def\C{\mathbb{C}}
\def\R{\mathbb{R}}
\def\Im{\mathrm{Im}}
\def\abco{\mathrm{abco}}
\def\co{\mathrm{co}}
\def\lin{\mathrm{lin}}
\def\Lipo{\mathrm{Lip}_0}
\begin{document}

\title[$\mathcal{A}$-compact holomorphic Lipschitz mappings]{$\mathcal{A}$-compact holomorphic Lipschitz mappings\\ on the unit ball of a Banach space}

\author[A. Jim\'enez-Vargas]{A. Jim\'enez-Vargas}
\address[A. Jim\'enez-Vargas]{Departamento de Matem\'aticas, Universidad de Almer\'ia, Ctra. de Sacramento s/n, 04120 La Ca\~{n}ada de San Urbano, Almer\'ia, Spain}
\email{ajimenez@ual.es}

\author[D. Ruiz-Casternado]{D. Ruiz-Casternado}
\address[D. Ruiz-Casternado]{Departamento de Matem\'aticas, Universidad de Almer\'ia, Ctra. de Sacramento s/n, 04120 La Ca\~{n}ada de San Urbano, Almer\'ia, Spain}
\email{drc446@ual.es}

\keywords{Vector-valued holomorphic Lipschitz mapping, linearization, $\A$-compact operator, operator ideal, factorization theorems}
\subjclass[2020]{46T25, 46E50, 46T20, 47L20}
\date{\today}

\thanks{This research was partially supported by Junta de Andaluc\'ia grant FQM194. The first author was supported by Ministerio de Ciencia e Innovaci\'{o}n grant PID2021-122126NB-C31 (funded by MICIU/AEI/10.13039/501100011033 and by ERDF/EU); and the second author by an FPU predoctoral fellowship of the Spanish Ministry of Universities (FPU23/03235).}

\begin{abstract}
Let $X$ and $Y$ be complex Banach spaces, $B_X$ be the open unit ball of $X$ and $\HL(B_X,Y)$ be the Banach space of all holomorphic Lipschitz maps $f\colon B_X\to Y$ such that $f(0)=0$, endowed with the Lipschitz norm. Given a Banach operator ideal $\A$, we use the property of $\A$-compactness by Carl and Stephani to introduce and study the subclass of those functions in $\HL(B_X,Y)$ for which its Lipschitz image is a relatively $\A$-compact subset of $Y$. We focus our attention on its structure as a composition Banach holomorphic Lipschitz ideal by using its connection with $\A$-compact linear operators through linearization/transposition techniques. 
\end{abstract}
\maketitle


\section*{Introduction and preliminaries}

Let $X$ and $Y$ be complex Banach spaces and let $B_X$ be the open unit ball of $X$. If $\Lipo(B_X,Y)$ denotes the set of all zero-preserving Lipschitz maps from $B_X$ into $Y$ and $\H^\infty(B_X,Y)$ stands for the set of all bounded holomorphic maps from $B_X$ into $Y$, let 
$$
\HL(B_X,Y):=\Lipo(B_X,Y)\cap \H^\infty(B_X,Y)
$$
be the Banach space of all holomorphic Lipschitz maps $f\colon B_X\to Y$ such that $f(0)=0$, endowed with the Lipschitz norm
$$
L(f)=\sup\left\{\frac{\|f(x)-f(y)\|}{\|x-y\|}\colon x,y \in B_X,\, x\neq y\right\}.
$$
The little holomorphic Lipschitz space $\Hl(B_X,Y)$ is the closed subspace of $\HL(B_X,Y)$ consisting of all those mappings $f\colon B_X\to Y$ which satisfy the following property:
$$
\forall\varepsilon>0,\quad \exists\delta>0\colon x,y\in B_X,\, 0<\left\|x-y\right\|<\delta\quad \Rightarrow\quad \frac{\left\|f(x)-f(y)\right\|}{\left\|x-y\right\|}<\varepsilon.
$$
We will write $\HL(B_X)$ and $\Hl(B_X)$ instead of $\HL(B_X,\C)$ and $\Hl(B_X,\C)$, respectively.

Recently, the linearization of such holomorphic Lipschitz mappings has been addressed by Aron et al. in \cite{Aro24}. Using the Dixmier--Ng Theorem, they not only showed that $\HL(B_X,Y)$ is indeed a dual space, but that there exist a complex Banach space $\G(B_X)$ and a map $\delta_X\in\HL(B_X,\G(B_X))$ satisfying the following universal property: for every complex Banach space $Y$ and every map $f\in\HL(B_X,Y)$, there exists a unique operator $T_f\in\L(\G(B_X),Y)$ such that $T_f\circ\delta_X=f$. In fact, $\G(B_X)=\overline{\lin}(\delta_X(B_X))\subseteq\HL(B_X)^*$ and $\delta_X\colon B_X\to\G(B_X)$ is the isometry defined by $\delta_X(x)(g)=g(x)$ for all $x\in B_X$ and $g\in\HL(B_X)$. Furthermore, the map $f\mapsto T_f$ is an isometric isomorphism from $\HL(B_X,Y)$ onto $\L(\G(B_X),Y)$.  

Given a Banach operator ideal $\A$, our objective in this paper is to study the $\A$-compactness and the $\A$-boundedness of holomorphic Lipschitz maps from $B_X$ into $Y$. 

Carl and Stephani \cite{CarSte84} introduced a refinement of the notion of compactness involved to a given Banach operator ideal $[\mathcal{A},\left\|\cdot\right\|_\A]$. For a Banach space $X$, a set $K\subseteq X$ is said to be relatively $\A$-compact if there exist a Banach space $Y$, an operator $T\in\A(Y,X)$ and a relatively compact set $M\subseteq Y$ such that $K\subseteq T(M)$. We denote by $\mathcal{K}_\A(X)$ the collection of all relatively $\A$-compact subsets of $X$. Lassalle and Turco \cite{LasTur13} proposed a measure of the size of $\A$-compactness of $K\in\mathcal{K}_\A(X)$ defining $m_\A(K)=\inf\{\|T\|_\A\}$, where the infimum is taken over all such $Y$, $T$ and $M$ as above. 

The $\A$-compactness yields the so-called $\A$-compact operators that are those $T\in\L(X,Y)$ satisfying that $T(\overline{B}_X)$ is a relatively $\A$-compact subset of $Y$, where $\overline{B}_X$ denotes the closed unit ball of $X$. The set of such operators is denoted as $\K_\A(X,Y)$, and $[\K_\A,\|\cdot\|_{\K_\A}]$ is a Banach operator ideal with the norm
$$
\|T\|_{\K_\A} = m_\A(T(\overline{B}_X)) \qquad (T \in \K_\A(X,Y))
$$
(see \cite[Section 2]{LasTur13}). Some examples of operator ideals $\K_\A$ are particularly interesting. For $p \in [1,\infty)$, the ideal of $p$-compact operators, -- introduced by Sinha and Karn \cite{SinKar02} -- , coincides with the ideal of $\mathcal{N}_p$-compact operators, where $\mathcal{N}_p$ is the ideal of right $p$-nuclear operators (see \cite{Rya02}). Moreover, Ain, Lillemets and Oja \cite{AinLilOja12,AinOja15} generalized the ideal of $p$-compact operators introducing the ideal of $(p,q)$-compact operators for $p\in [1,\infty]$ and $q\in [1,p^*]$, which in this case coincides with the ideal of $\mathcal{N}_{(p,1,q^*)}$-compact operators, where $\mathcal{N}_{(p,1,q^*)}$ stands for the ideal of $(p,1,q^*)$-nuclear operators (see \cite[18.1.1]{Pie80}).

The Banach ideals of $\A$-compact operators, polynomials and holomorphic functions and their related approximation properties have been studied in \cite{Kim-19, LasTur13, LasTur18, Tur16}. In particular, the ideals of $p$-compact and $(p,q)$-compact operators have been extended to different non-linear contexts as, for example, holomorphic \cite{AroMaeRue10, LasTur12}, bounded-holomorphic \cite{Jim23,JimRui243}, polynomial \cite{AroRue11}, and Lipschitz \cite{AchDahTur19}.


We have divided this paper into three sections. Section \ref{Section 1} contains some preliminary results where we introduce the concept of ideal of holomorphic Lipschitz maps (Subsection \ref{Subsection 1}) and present both procedures to generate new Banach holomorphic Lipschitz ideals by composition (Subsection \ref{Subsection 2}) and duality (Subsection \ref{Subsection 3}). In particular, the transposition of functions permit us to identify in Corollary \ref{teo-4-1} the space $\HL(B_X,Y)$ with the space of all weak*-to-weak* continuous linear operators from $Y^*$ into $\HL(B_X)$. 

Given an operator ideal $\A$, our objective in Section \ref{Section 2} is to introduce a variant for holomorphic Lipschitz maps of the concept of $\A$-compact linear operators between Banach spaces \cite{CarSte84,LasTur13}. It is important to note that we do not use the global range of a map $f \in \HL(B_X,Y)$, but its Lipchitz image 
$$
\Im_{L}(f) := \left\{\frac{f(x)-f(y)}{\|x-y\|}\colon x,y\in B_X,\, x\neq y\right\} \subseteq Y.
$$
So, a map $f\in\HL(B_X,Y)$ is said to be $\A$-compact holomorphic Lipschitz if $\Im_L(f)$ is a relatively $\A$-compact subset of $Y$. In Propositions \ref{Theorem: characterizations A-compact} and \ref{Theorem: characterizations A-compact-2}, we obtain some immediate characterizations of $\A$-compact holomorphic Lipschitz maps applying the different descriptions of $\A$-compacts sets obtained in \cite{CarSte84,LasTur13}. Theorem \ref{Theorem: linearization A-compact} provides a useful characterization of such maps in terms of their linearizations. In particular, interesting descriptions of $\K$-compact and $\K_p$-compact holomorphic Lipschitz maps in term of the continuity and compactness of their transposes are stated (see Theorem \ref{teo-4-2}, and Propositions \ref{cor-4-1}-\ref{teo-4-22}). 

The $\A$-compactness is generalized by the property of $\A$-boundedness due to Stephani \cite{Ste-80}. An extension of this property to the holomorphic Lipschitz setting is presented in Section \ref{Section 3}. This approach permits us to study some new classes of holomorphic Lipschitz maps that are not covered by the $\A$-compactness such as weakly compact, Rosenthal, Banach--Saks, Asplund, finite-rank and approximable holomorphic Lipschitz maps. Some variants of classical results on ideals of factorizable operators are established for the associate ideals of holomorphic Lipschitz mappings (see Theorems \ref{teo-4-3}-\ref{teo-4-3-1}). By transposition, we identify certain spaces of holomorphic (little) Lipschitz maps from $B_X$ into $Y$ with some distinguished spaces of linear operators between  $Y^*$ and $\HL(B_X)$ equipped both with suitable topologies (see Propositions \ref{cor-4-1}-\ref{teo-4-22}-\ref{cor-4-1b} and Theorems \ref{teo-4-4}-\ref{Theorem: linearization finite-dimensional}). For future reference, we conclude the paper with an appeal to the approximation property of $\HL(B_X)$.


\section{Preliminary results}\label{Section 1}

\textbf{Notation.} Throughout this paper, $X$ and $Y$ will be complex Banach spaces. We denote by $\L(X,Y)$, $\F(X,Y)$, $\W(X,Y)$, $\K(X,Y)$ and $\overline{\F}(X,Y)$ the spaces of all bounded, finite-rank bounded, weakly compact, compact and approximable linear operators from $X$ to $Y$, respectively, endowed with the canonical operator norm $\left\|\cdot\right\|$. As usual, $X^*$ and $X^{**}$ stand for the topological dual and the topological bidual of $X$, respectively. We denote by $B_X$ and $\overline{B}_X$ the open unit ball and the closed unit ball of $X$, respectively. Given $A\subseteq X$, $\co(A)$, $\abco(A)$ and $\overline{\abco}(A)$ represent the convex hull, the absolutely convex hull and the norm-closed absolutely convex hull of $A$ in $X$. For any $x\in X$ and $x^*\in X^*$, we will use the notation $\langle x^*,x\rangle=x^*(x)$. We denote by $\HL(B_X,B_Y)$ the set of all zero-preserving holomorphic Lipschitz maps from $B_X$ into $B_Y$.

If $E$ and $F$ are locally convex Hausdorff spaces, $\L(E;F)$ stands for the linear space of all continuous linear operators from $E$ into $F$. Unless stated otherwise, if $E$ and $F$ are Banach spaces, we will understand that they are endowed with the norm topology. We refer to the monograph \cite{Meg-98} for definitions and main properties of the weak* topology $w^*$, the weak topology $w$ and the bounded weak* topology $bw^*$. The rest of the necessary notation will be introduced throughout the paper.


\subsection{Ideals of holomorphic Lipschitz mappings}\label{Subsection 1}

Based on the notion of operator ideal introduced by Pietsch \cite[1.1]{Pie80}, we present a version of this concept for holomorphic Lipschitz maps.

\begin{definition}\label{Definition: holomorphic Lipschitz ideal}
    An ideal of holomorphic Lipschitz maps (also called holomorphic Lipschitz ideal) is a subclass $\A^{\HL}$ of $\HL$ such that for any complex Banach spaces $X$ and $Y$, the components
    $$
    \A^{\HL}(B_X,Y) := \HL(B_X,Y) \cap  \A^{\HL}
    $$
    satisfy the following conditions:
\begin{itemize}
  \item[(I1)] $\A^{\HL}(B_X,Y)$ is a linear subspace of $\HL(B_X,Y)$.
  \item[(I2)] Given $h \in \HL(B_X)$ and $y \in Y$, the map $h \cdot y: B_X \to Y$, defined as $(h \cdot y)(x)=h(x)y$ for all $x\in B_X$, is in $\A^{\HL}(B_X,Y)$. 
  \item[(I3)] The ideal property: if $W$ and $Z$ are complex Banach spaces, $f \in \A^{\HL}(B_X,Y)$, $T \in \L(Y,W)$ and $g \in \HL(B_Z,B_X)$, then we have that $T \circ f \circ g \in \A^{\HL}(B_Z,W)$.    
\end{itemize}
If we endow this subclass $\A^{\HL}$ with a function $\left\|\cdot\right\|_{\A^{\HL}}: \A^{\HL} \to \R_0^+$ satisfying the properties:
    \begin{itemize}
        \item[(N1)] $(\A^{\HL}(B_X,Y),\left\|\cdot\right\|_{\A^{\HL}})$ is a ($s$-Banach) $s$-normed space with $s \in (0,1]$ and $L(f) \leq \|f\|_{\A^{\HL}}$ for all $f \in \A^{\HL}(B_X,Y)$,
        \item[(N2)] $\|h \cdot y\|_{\A^{\HL}} = L(h)\|y\|$ for all $h \in \HL(B_X)$ and $y \in Y$,
        \item[(N3)] $\|T \circ f \circ g\|_{\A^{\HL}} \leq \|T\|\|f\|_{\A^{\HL}}L(g)$ if $f \in \A^{\HL}(B_X,Y)$, $T \in \L(Y,W)$ and $g \in \HL(B_Z,B_X)$, 
    \end{itemize}
then we say that $[\A^{\HL},\left\|\cdot\right\|_{\A^{\HL}}]$ is a ($s$-Banach) $s$-normed holomorphic Lipschitz ideal.

If $[\A^{\HL},\left\|\cdot\right\|_{\A^{\HL}}]$ and $[\B^{\HL},\left\|\cdot\right\|_{\B^{\HL}}]$ are $s$-normed holomorphic Lipschitz ideals, we will write 
$$
[\A^{\HL},\left\|\cdot\right\|_{\A^{\HL}}] \leq [\B^{\HL},\left\|\cdot\right\|_{\B^{\HL}}]
$$
whenever $\A^{\HL}\subseteq\B^{\HL}$ and $\|f\|_{\B^{\HL}} \leq \|f\|_{\A^{\HL}}$ for all $f \in \A(B_X,Y)$. 
\end{definition}

Note that this concept of holomorphic Lipschitz ideal is closely related to the notion of hyper-ideal of polynomials since it contains finite-rank holomorphic Lipschitz mappings but this does not happen with polynomial ideals in general.

Influenced by \cite[8.1]{Pie80}, we now introduce a method to construct new $s$-normed holomorphic Lipschitz ideals from given ones.

\begin{definition}
A holomorphic Lipschitz procedure is a correspondence 
$$
[\A^{\HL},\|\cdot\|_{\A^{\HL}}]\mapsto[(\A^{\HL})^{new},\|\cdot\|_{\A^{\HL}}^{new}]
$$
that maps an $s$-normed holomorphic Lipschitz ideal $[\A^{\HL},\|\cdot\|_{\A^{\HL}}]$ to an $s$-normed holomorphic Lipschitz ideal $[(\A^{\HL})^{new},\|\cdot\|_{\A^{\HL}}^{new}]$. If we note
$$
[\A^{\HL},\|\cdot\|_{\A^{\HL}}]^{new}:=[(\A^{\HL})^{new},\|\cdot\|_{\A^{\HL}}^{new}],
$$
a holomorphic Lipschitz procedure is called:
\begin{enumerate}
	\item[(M)] Monotone if $[\A^{\HL},\|\cdot\|_{\A^{\HL}}]\leq [\B^{\HL},\|\cdot\|_{\B^{\HL}}]$ implies $[\A^{\HL},\|\cdot\|_{\A^{\HL}}]^{new}\leq [\B^{\HL},\|\cdot\|_{\B^{\HL}}]^{new}$.
	\item[(I)] Idempotent if $([\A^{\HL},\|\cdot\|_{\A^{\HL}}]^{new})^{new}=[\A^{\HL},\|\cdot\|_{\A^{\HL}}]^{new}$.
  \item[(H)] Hull if it is monotone, idempotent and $[\A^{\HL},\|\cdot\|_{\A^{\HL}}]\leq [\A^{\HL},\|\cdot\|_{\A^{\HL}}]^{new}$.
\end{enumerate}
\end{definition}

\begin{definition}
Let $X$ and $Y$ be complex Banach spaces. For a normed holomorphic Lipschitz ideal $[\A^{\HL},L(\cdot)]$, we set 
$$
(\A^{\HL})^{clos}(B_X,Y)=\left\{f\in\HL(B_X,Y)\;|\; \exists (f_n)_{n=1}^\infty\subseteq\A^{\HL}(B_X,Y) \colon \lim_{n\to\infty} L(f_n-f)=0\right\};
$$
and for an $s$-normed holomorphic Lipschitz ideal $[\A^{\HL},\|\cdot\|_{\A^{\HL}}]$, we define  
$$
(\A^{\HL})^{reg}(B_X,Y)=\left\{f\in\HL(B_X,Y)\colon \kappa_Y\circ f\in\A^{\HL}(B_X,Y^{**})\right\},
$$
where $\kappa_Y: Y \to Y^{**}$ is the canonical isometric linear embedding.
\end{definition}

The following result can be proved easily.

\begin{proposition}
The correspondences 
$$
[\A^{\HL},L(\cdot)]\mapsto[(\A^{\HL})^{clos},L(\cdot)]
$$
and 
$$
[\A^{\HL},\|\cdot\|_{\A^{\HL}}]\mapsto[\A^{\HL},\|\cdot\|_{\A^{\HL}}]^{reg}
$$
are hull holomorphic Lipschitz procedures. The holomorphic Lipschitz spaces $(\A^{\HL})^{clos}(B_X,Y)$ and $(\A^{\HL})^{reg}(B_X,Y)$ are called the closed hull and regular hull of $\A^{\HL}(B_X,Y)$, respectively. $\hfill\qed$
\end{proposition}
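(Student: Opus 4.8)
The plan is to treat the two procedures separately; in each case, verifying that the new object is an ($s$-Banach) $s$-normed holomorphic Lipschitz ideal and checking (M), (I) and (H) reduces to standard facts once the right identification is made, the only step needing a little care being the idempotency of the regular hull.

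For the closed hull, I would first observe that $(\A^{\HL})^{clos}(B_X,Y)$ is exactly the norm-closure of the subspace $\A^{\HL}(B_X,Y)$ in the Banach space $(\HL(B_X,Y),L)$. Then (I1) and (N1) are immediate, since the closure of a linear subspace of a Banach space is a closed linear subspace, hence complete, and $L(f)\le L(f)$ trivially (so $s=1$); (I2) holds because the constant sequence shows $h\cdot y\in(\A^{\HL})^{clos}(B_X,Y)$, and (N2) is the elementary identity $L(h\cdot y)=L(h)\|y\|$; (N3) is just submultiplicativity of the Lipschitz norm, $L(T\circ f\circ g)\le\|T\|\,L(f)\,L(g)$, while (I3) follows from a one-line limiting argument: if $L(f_n-f)\to0$ with $f_n\in\A^{\HL}(B_X,Y)$, then $T\circ f_n\circ g\in\A^{\HL}(B_Z,W)$ and $L(T\circ f_n\circ g-T\circ f\circ g)\le\|T\|\,L(f_n-f)\,L(g)\to0$. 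Monotonicity holds because closures respect inclusions and both norms equal $L$; idempotency because a closed set is its own closure; and the inequality $[\A^{\HL},L]\le[(\A^{\HL})^{clos},L]$ is once more the constant-sequence remark.

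For the regular hull I would fix the norm $\|f\|_{\A^{\HL}}^{reg}:=\|\kappa_Y\circ f\|_{\A^{\HL}}$ and record the commutation rule $\kappa_W\circ T=T^{**}\circ\kappa_Y$, with $\|T^{**}\|=\|T\|$, valid for every $T\in\L(Y,W)$. Then (I1) and (I2) are routine, using linearity of $\kappa_Y$ and the identity $\kappa_Y\circ(h\cdot y)=h\cdot\kappa_Y(y)$, while (I3)/(N3) follow from $\kappa_W\circ T\circ f\circ g=T^{**}\circ(\kappa_Y\circ f)\circ g$ together with the ideal axioms of $\A^{\HL}$. For (N1): the $s$-norm axioms and the bound $L(f)=L(\kappa_Y\circ f)\le\|\kappa_Y\circ f\|_{\A^{\HL}}$ transfer from $\A^{\HL}$ through the injective isometry $\kappa_Y$; and, when $\A^{\HL}$ is $s$-Banach, completeness follows by taking a $\|\cdot\|_{\A^{\HL}}^{reg}$-Cauchy sequence $(f_n)$, passing to its limit $F$ in $\A^{\HL}(B_X,Y^{**})$ and to its limit $f$ in $\HL(B_X,Y)$, and identifying $F=\kappa_Y\circ f$ by uniqueness of limits in $\HL(B_X,Y^{**})$. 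Property (N2) is $\|h\cdot\kappa_Y(y)\|_{\A^{\HL}}=L(h)\|\kappa_Y(y)\|=L(h)\|y\|$; monotonicity is immediate; and the hull inequality follows by applying the ideal axiom to $\kappa_Y\circ f=\kappa_Y\circ f\circ\mathrm{id}_{B_X}$, together with $\|\kappa_Y\|=1$.

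The step I expect to require the most care is the idempotency of the regular hull, $((\A^{\HL})^{reg})^{reg}=(\A^{\HL})^{reg}$. One inclusion is free from monotonicity and the hull inequality just established. For the other, the key point is that the canonical operator $R:=(\kappa_{Y^*})^*\colon Y^{****}\to Y^{**}$ has norm $1$ and satisfies $R\circ\kappa_{Y^{**}}=\mathrm{id}_{Y^{**}}$, which is a direct duality computation. Then, if $f\in((\A^{\HL})^{reg})^{reg}(B_X,Y)$, we have $\kappa_{Y^{**}}\circ\kappa_Y\circ f\in\A^{\HL}(B_X,Y^{****})$, hence $\kappa_Y\circ f=R\circ(\kappa_{Y^{**}}\circ\kappa_Y\circ f)\in\A^{\HL}(B_X,Y^{**})$ by (I3), so $f\in(\A^{\HL})^{reg}(B_X,Y)$; and since the double-regular norm of $f$ equals $\|\kappa_{Y^{**}}\circ\kappa_Y\circ f\|_{\A^{\HL}}$, the bound $\|\kappa_Y\circ f\|_{\A^{\HL}}\le\|R\|\,\|\kappa_{Y^{**}}\circ\kappa_Y\circ f\|_{\A^{\HL}}$ from (N3) delivers the norm inequality required for (I). All remaining assertions are routine bookkeeping.
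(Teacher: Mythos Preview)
Your proposal is correct. The paper does not actually give a proof of this proposition: it prefaces the statement with ``The following result can be proved easily'' and closes it with a $\qed$ symbol, leaving the verification to the reader. What you have supplied is exactly the routine check that the paper omits, and your handling of the only delicate point---idempotency of the regular hull via the retraction $R=(\kappa_{Y^*})^*$ satisfying $R\circ\kappa_{Y^{**}}=\mathrm{id}_{Y^{**}}$---is the standard argument, directly parallel to Pietsch's treatment of the regular hull for operator ideals.
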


The preceding result motivates the introduction of the following concepts.

\begin{definition}
A normed holomorphic Lipschitz ideal $[\A^{\HL},L(\cdot)]$ is called closed if $[\A^{\HL},L(\cdot)]=[(\A^{\HL})^{clos},L(\cdot)]$, that is, every component $\A^{\HL}(B_X,Y)$ is a closed subspace of $\HL(B_X,Y)$ with the topology induced by the Lipschitz norm.

an $s$-normed holomorphic Lipschitz ideal $[\A^{\HL},\|\cdot\|_{\A^{\HL}}]$ is called regular if $[\A^{\HL},\|\cdot\|_{\A^{\HL}}]=[\A^{\HL},\|\cdot\|_{\A^{\HL}}]^{reg}$, that is, if $f\in\HL(B_X,Y)$ and $\kappa_Y \circ f \in \A^{\HL}(B_X,Y^{**})$, then $f\in\A^{\HL}(B_X,Y)$ with $\|f\|_{\A^{\HL}}=\|\kappa_Y \circ f\|_{\A^{\HL}}$.
\end{definition}


\subsection{Composition ideals of holomorphic Lipschitz mappings}\label{Subsection 2}

We now introduce the composition procedure to generate $s$-normed holomorphic Lipschitz ideals.

\begin{definition}\label{Definition: composition procedure}
Let $[\A,\left\|\cdot\right\|_\A]$ be an $s$-normed operator ideal. For any complex Banach spaces $X$ and $Y$, define
$$
\A \circ \HL(B_X,Y) = \left\{f \in \HL(B_X,Y) \;|\; \exists S\in\A(Z,Y),\; \exists h\in\HL(B_X,Z)\colon f=S\circ h\right\},
$$
and 
$$
\|f\|_{\A \circ \HL}=\inf\{\|S\|_\A L(h)\},
$$
where the infimum extends over all factorizations of $f$ as above.
\end{definition}

The belonging of a holomorphic Lipschitz map to $\A \circ \HL$ may be characterized by means of its linearization as follows. This result can be compared to \cite[Theorem 3.2]{Aro10} for bounded holomorphic functions.

\begin{theorem}\label{Theorem: linearization}
Let $[\A,\left\|\cdot\right\|_\A]$ be an $s$-normed operator ideal and $f\in\HL(B_X,Y)$. The following statements are equivalent:
\begin{enumerate}
	\item $f$ belongs to $\A\circ\HL(B_X,Y)$.
	\item $T_f$ belongs to $\A(\G(B_X),Y)$.
\end{enumerate}
In such a case, $\|f\|_{\A \circ \HL} = \|T_f\|_\A$. 
\end{theorem}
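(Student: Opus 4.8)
The plan is to prove the equivalence by exploiting the universal linearization property of $\delta_X$ and the isometric identification $f \mapsto T_f$ between $\HL(B_X,Y)$ and $\L(\G(B_X),Y)$ recalled in the introduction, together with the ideal properties of $[\A,\|\cdot\|_\A]$ and of $[\A^{\HL},\|\cdot\|_{\A^{\HL}}]$-type objects.

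\medskip

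\textbf{From (2) to (1).} Suppose $T_f \in \A(\G(B_X),Y)$. Since $T_f \circ \delta_X = f$ and $\delta_X \in \HL(B_X,\G(B_X))$, this factorization $f = T_f \circ \delta_X$ with $T_f \in \A(\G(B_X),Y)$ and $\delta_X \in \HL(B_X,\G(B_X))$ exhibits $f$ as an element of $\A \circ \HL(B_X,Y)$. Moreover, since $\delta_X$ is an isometry we have $L(\delta_X) = 1$, so taking the infimum over factorizations gives $\|f\|_{\A \circ \HL} \leq \|T_f\|_\A L(\delta_X) = \|T_f\|_\A$.

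\medskip

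\textbf{From (1) to (2).} Suppose $f = S \circ h$ with $S \in \A(Z,Y)$ and $h \in \HL(B_X,Z)$. By the universal property applied to $h$, there is $T_h \in \L(\G(B_X),Z)$ with $T_h \circ \delta_X = h$ and $\|T_h\| = L(h)$. Then $(S \circ T_h) \circ \delta_X = S \circ h = f = T_f \circ \delta_X$; since $\delta_X(B_X)$ spans a dense subspace of $\G(B_X)$ and both $S \circ T_h$ and $T_f$ are bounded linear operators agreeing on $\delta_X(B_X)$, uniqueness in the universal property (equivalently, density) forces $T_f = S \circ T_h$. Because $[\A,\|\cdot\|_\A]$ is an operator ideal, $T_f = S \circ T_h \in \A(\G(B_X),Y)$ with $\|T_f\|_\A \leq \|S\|_\A \|T_h\| = \|S\|_\A L(h)$. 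Taking the infimum over all factorizations of $f$ yields $\|T_f\|_\A \leq \|f\|_{\A \circ \HL}$, which together with the reverse inequality from the previous paragraph gives $\|f\|_{\A \circ \HL} = \|T_f\|_\A$.

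\medskip

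I do not expect a genuine obstacle here: the argument is a transcription of the classical linearization trick (as in \cite[Theorem 3.2]{Aro10} for bounded holomorphic maps) into the holomorphic Lipschitz setting, and every ingredient---the universal property of $\G(B_X)$, the isometry $f \mapsto T_f$, and the $s$-normed ideal axioms---is already available. The only point requiring a little care is the identification $T_f = S \circ T_h$: one must invoke density of $\lin(\delta_X(B_X))$ in $\G(B_X)$ (equivalently, the uniqueness clause of the universal property) rather than just pointwise agreement on $B_X$, and one should check that the norm estimate $\|T_h\| = L(h)$ is exactly what the linearization theorem of Aron et al.\ provides so that no constant is lost. With these observations the two inequalities match and the claimed isometric formula $\|f\|_{\A \circ \HL} = \|T_f\|_\A$ follows.
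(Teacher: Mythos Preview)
Your proof is correct and essentially identical to the paper's own argument: both directions use the universal factorization $f = T_f \circ \delta_X$ with $L(\delta_X)=1$ for $(ii)\Rightarrow(i)$, and for $(i)\Rightarrow(ii)$ linearize the inner map $h$ to $T_h$, deduce $T_f = S \circ T_h$ from density of $\lin(\delta_X(B_X))$ in $\G(B_X)$, and conclude via the ideal property with the matching norm estimates. The only cosmetic difference is that you prove the implications in the opposite order.
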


\begin{proof}
$(i)\Rightarrow(ii)$: Assume that $f\in\A\circ\HL(B_X,Y)$. Then $f=S\circ h$, where $Z$ is a complex Banach space, $S\in\A(Z,Y)$ and $h\in\HL(B_X,Z)$. By \cite[Proposition 2.3 (c)]{Aro24}, there exist $T_f\in\L(\G(B_X),Y)$ and $T_h\in\L(\G(B_X),Z)$ with $\left\|T_f\right\|=L(f)$ and $\left\|T_h\right\|=L(h)$ such that $f=T_f\circ\delta_X$ and $h=T_h\circ\delta_X$. It follows that $T_f\circ\delta_X=S\circ T_h\circ\delta_X$, and since $\lin(\delta_X(B_X))$ is a dense linear subspace of $\G(B_X)$ by \cite[Proposition 2.3 (b)]{Aro24}, we deduce that $T_f=S\circ T_h$ and $\left\|T_f\right\|_\A\leq\left\|S\right\|_\A L(h)$. By the ideal property of $\A$, we conclude that $T_f\in\A(\G(B_X),Y)$, and passing to the infimum over all the factorizations of $f$ as above, we obtain that $\left\|T_f\right\|_\A\leq\|f\|_{\A \circ\HL}$. 

$(ii)\Rightarrow(i)$: Assume that $T_f\in\A(\G(B_X),Y)$. Notice that $f = T_f \circ \delta_X \in \A \circ \HL(B_X,Y)$ since $\delta_X \in \HL(B_X,\G(B_X))$ with $L(\delta_X) = 1$ by \cite[Proposition 2.3 (a)]{Aro24}. Moreover,
$$
\|f\|_{\A \circ \HL} = \|T_f \circ \delta_X\|_{\A \circ \HL} \leq \|T_f\|_\A L(\delta_X) = \|T_f\|_\A.
$$
\end{proof}

Some properties of an operator ideal $\A$ can be inherited by $\A\circ \HL$ as we see below.

\begin{corollary}\label{Theorem: composition ideal}
If $[\A,\left\|\cdot\right\|_\A]$ is an $s$-normed operator ideal, then $[\A \circ \HL,\left\|\cdot\right\|_{\A \circ \HL}]$ is an $s$-normed holomorphic Lipschitz ideal called the holomorphic Lipschitz composition ideal of $\A$. Moreover, $[\A \circ \HL,\left\|\cdot\right\|_{\A \circ \HL}]$ is closed (resp., $s$-Banach, regular) whenever $[\A,\left\|\cdot\right\|]$ (resp., $[\A,\left\|\cdot\right\|_\A]$) is so. 
\end{corollary}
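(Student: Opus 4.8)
The plan is to verify directly that $\A \circ \HL$ satisfies all the axioms of an $s$-normed holomorphic Lipschitz ideal, namely (I1)--(I3) and (N1)--(N3), exploiting the linearization identity $\|f\|_{\A\circ\HL} = \|T_f\|_\A$ from Theorem \ref{Theorem: linearization} to transport each property from the operator ideal $[\A,\|\cdot\|_\A]$. The key observation throughout is that $f \mapsto T_f$ is an isometric isomorphism from $\HL(B_X,Y)$ onto $\L(\G(B_X),Y)$ which behaves well under composition: $T_{T\circ f} = T\circ T_f$ for $T\in\L(Y,W)$, and if $g\in\HL(B_Z,B_X)$ then $T_{f\circ g} = T_f\circ \widehat{g}$ where $\widehat{g}\in\L(\G(B_Z),\G(B_X))$ is the linearization of $\delta_X\circ g$ with $\|\widehat{g}\| = L(g)$ (this is the ``$\mathcal{G}$-functoriality'' already implicit in \cite{Aro24} and used in the proof of Theorem \ref{Theorem: linearization}).

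\textbf{Order of steps.} First I would establish (I1)/(N1): $\A\circ\HL(B_X,Y)$ corresponds under linearization to $\A(\G(B_X),Y)$, which is a linear subspace of $\L(\G(B_X),Y)$; since $[\A,\|\cdot\|_\A]$ is $s$-normed and $\|f\|_{\A\circ\HL}=\|T_f\|_\A$, the triangle-type inequality $\|f_1+f_2\|_{\A\circ\HL}^s \le \|f_1\|_{\A\circ\HL}^s + \|f_2\|_{\A\circ\HL}^s$ and the estimate $L(f)=\|T_f\|\le\|T_f\|_\A=\|f\|_{\A\circ\HL}$ follow immediately. Next, (I2)/(N2): for $h\in\HL(B_X)$ and $y\in Y$, the linearization of $h\cdot y$ is the rank-one operator $T_h(\cdot)\,y$ on $\G(B_X)$, which lies in every operator ideal $\A$ with ideal norm equal to $\|T_h\|\,\|y\| = L(h)\|y\|$ by the normalization axiom for $\A$; hence $h\cdot y\in\A\circ\HL(B_X,Y)$ with $\|h\cdot y\|_{\A\circ\HL}=L(h)\|y\|$. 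Then (I3)/(N3): for $f\in\A\circ\HL(B_X,Y)$, $T\in\L(Y,W)$, $g\in\HL(B_Z,B_X)$, the identity $T_{T\circ f\circ g} = T\circ T_f\circ\widehat{g}$ together with the ideal-norm inequality for $\A$ gives $\|T\circ f\circ g\|_{\A\circ\HL} = \|T\circ T_f\circ\widehat{g}\|_\A \le \|T\|\,\|T_f\|_\A\,\|\widehat{g}\| = \|T\|\,\|f\|_{\A\circ\HL}\,L(g)$. Finally I would address the three inheritance claims: $s$-Banach-ness follows because $\A(\G(B_X),Y)$ is $\|\cdot\|_\A$-complete and linearization is an isometry, so a Cauchy sequence in $\A\circ\HL(B_X,Y)$ transfers to a Cauchy sequence in $\A(\G(B_X),Y)$ whose limit delinearizes back; closedness is the special case with $\|\cdot\|_\A = \|\cdot\|$ (operator norm), using that $\K$-type or more generally closed ideals $\A$ are norm-closed in $\L$ and the Lipschitz norm on $\HL$ matches the operator norm on $\L(\G(B_X),Y)$ under linearization; and regularity follows from the commuting square $T_{\kappa_Y\circ f} = \kappa_Y\circ T_f$, so $\kappa_Y\circ f\in\A\circ\HL(B_X,Y^{**})$ iff $\kappa_Y\circ T_f\in\A(\G(B_X),Y^{**})$ iff (by regularity of $\A$) $T_f\in\A(\G(B_X),Y)$ iff $f\in\A\circ\HL(B_X,Y)$, with matching norms.

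\textbf{Main obstacle.} The conceptual content is light --- essentially everything is a transfer of axioms along the isometric isomorphism $f\mapsto T_f$ --- so the only real care needed is in the functoriality statement $T_{f\circ g} = T_f\circ\widehat{g}$ and the value $\|\widehat{g}\| = L(g)$: one must check that $g\in\HL(B_Z,B_X)$ genuinely induces a bounded linear map $\widehat{g}\colon\G(B_Z)\to\G(B_X)$ with the right norm, which amounts to applying the universal property of $\delta_Z$ to the map $\delta_X\circ g\in\HL(B_Z,\G(B_X))$ and noting $L(\delta_X\circ g)\le L(\delta_X)L(g)=L(g)$, while the reverse inequality $L(g)\le\|\widehat g\|$ comes from $\delta_X$ being an isometry. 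A second minor point is that for the closedness and $s$-Banach assertions one should confirm that the infimum defining $\|f\|_{\A\circ\HL}$ is attained (or well-approximated) by the canonical factorization $f = T_f\circ\delta_X$, which is exactly the content of the equality clause of Theorem \ref{Theorem: linearization} and removes any subtlety about the infimum. Beyond that, the proof is routine axiom-checking and I would present it compactly by citing Theorem \ref{Theorem: linearization} at each step rather than re-deriving the linearization identities.
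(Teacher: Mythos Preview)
Your proposal is correct and follows essentially the same route as the paper: both proofs reduce every axiom check to the corresponding property of the operator ideal $[\A,\|\cdot\|_\A]$ via the linearization isometry $f\mapsto T_f$ of Theorem~\ref{Theorem: linearization}, using the functoriality $T_{T\circ f\circ g}=T\circ T_f\circ\widehat{g}$ with $\|\widehat{g}\|=L(g)$ for (I3)/(N3), the rank-one identification $T_{h\cdot y}=T_h(\cdot)\,y$ for (I2)/(N2), and the relation $T_{\kappa_Y\circ f}=\kappa_Y\circ T_f$ for regularity. The only cosmetic difference is that the paper explicitly writes out the factorization $T\circ f\circ g=(T\circ T_f\circ\widehat{g})\circ\delta_Z$ and appeals to the infimum definition of $\|\cdot\|_{\A\circ\HL}$ for (N3), whereas you invoke the identity $\|f\|_{\A\circ\HL}=\|T_f\|_\A$ directly; these are equivalent.
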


\begin{proof}
We first prove that $\A\circ \HL(B_X,Y)$ is a linear subspace of $\HL(B_X,Y)$. Let $\lambda\in\mathbb{C}$ and $f,g\in\A\circ\HL(B_X,Y)$. Then $T_f,T_g\in\A(\G(B_X),Y)$ by Theorem \ref{Theorem: linearization}. Hence $T_{\lambda f}=\lambda T_f$ and $T_{f+g}=T_f+T_g$ belongs to $\A(\G(B_X),Y)$ by \cite[Proposition 2.3 (c)]{Aro24} and the ideal property of $\A$. This implies that $\lambda f,f+g\in\A\circ\HL(B_X,Y)$ again by Theorem \ref{Theorem: linearization}, as required. 

We now prove the conditions required in Definition \ref{Definition: holomorphic Lipschitz ideal}. 

(I1-N1): Using Theorem \ref{Theorem: linearization} and \cite[Proposition 2.3 (c)]{Aro24}, we obtain that $\left\|\cdot\right\|_{\A \circ \HL}$ is a $s$-norm on $\A\circ \HL(B_X,Y)$ since 
\[\begin{split}
\left\|\lambda f\right\|_{\A\circ\HL}&=\left\|T_{\lambda f}\right\|_{\A}=\left\|\lambda T_f\right\|_{\A}=\left|\lambda\right|\left\|T_f\right\|_{\A}=\left|\lambda\right|\left\|f\right\|_{\A\circ\HL},\\
\left\|f+g\right\|^s_{\A\circ\HL}&=\left\|T_{f+g}\right\|^s_\A=\left\|T_f+T_g\right\|^s_{\A}\leq \left\|T_f\right\|^s_{\A}+\left\|T_g\right\|^s_{\A}=\left\|f\right\|^s_{\A \circ \HL}+\left\|g\right\|^s_{\A \circ \HL},\\
\left\|f\right\|_{\A \circ \HL}&=0\Rightarrow \left\|T_f\right\|_\A=0\Rightarrow T_f=0\Rightarrow f=T_f\circ\delta_X=0.
\end{split}\]
Using \cite[Proposition 2.3 (c)]{Aro24}, the fact that $[\A,\left\|\cdot\right\|_\A]$ is an $s$-normed operator ideal and Theorem \ref{Theorem: linearization}, we deduce that $L(f)=\left\|T_f\right\|\leq\left\|T_f\right\|_\A=\left\|f\right\|_{\A \circ \HL}$ for all $f\in\A\circ\HL$. 

(I2-N2): Let $h\in\HL(B_X)$ and $y\in Y$. It is easy to prove that $h\cdot y\in\HL(B_X,Y)$ with $L(h\cdot y)=L(h)\left\|y\right\|$. Note that $T_{h\cdot y}=T_h\cdot y$ since $T_h\cdot y\in\L(\G(B_X),Y)$ and
$$
(h\cdot y)(x)=h(x)y=T_h(\delta_X(x))y=(T_h\cdot y)(\delta_X(x))=((T_h\cdot y)\circ\delta_X)(x)
$$
for all $x\in B_X$. By the ideal property of $[\A,\|\cdot\|_\A]$, we have that $T_{h\cdot y}\in\A(\G(B_X),Y)$ with $\left\|T_{h\cdot y}\right\|_\A=\left\|T_h\right\|\left\|y\right\|=L(h)\left\|y\right\|$. Hence $h\cdot y\in\A\circ\HL(B_X,Y)$ with $\left\|h\cdot y\right\|_{\A\circ\HL}=L(h)\left\|y\right\|$ by Theorem \ref{Theorem: linearization}.

(I3-N3): Let $W,Z$ be complex Banach spaces, $f\in\A\circ\HL(B_X,Y)$, $T\in\L(Y,W)$ and $g\in\HL(B_Z,B_X)$. In light of \cite[p. 3033]{Aro24}, we can take $\hat{g}:=T_{\delta_Y\circ g}\in\L(\G(B_Z),\G(B_X))$ such that $\hat{g}\circ\delta_Z=\delta_X\circ g$ with $\|\hat{g}\|=L(g)$. Note that
$$
T\circ f\circ g=T\circ(T_f\circ\delta_X)\circ g=(T\circ T_f\circ\hat{g})\circ\delta_Z,
$$
where $T\circ T_f\circ\hat{g}\in\A(\G(B_Z),W)$ by the ideal property of $\A$ and $\delta_Z\in\HL(B_Z,\G(B_Z))$. Hence, $T\circ f\circ g\in\A\circ\HL(B_Z,W)$ and 
$$
\|T \circ f \circ g\|_{\A \circ \HL}\leq \|T \circ T_f \circ \hat{g}\|_\A L(\delta_Z)\leq \|T\|\|T_f\|_\A\|\hat{g}\|=\|T\|\|f\|_{\A\circ \HL}L(g).
$$

Hence, $[\A\circ \HL,\left\|\cdot\right\|_{\A \circ \HL}]$ is an $s$-normed holomorphic Lipschitz ideal. To see that the $s$-norm $\left\|\cdot\right\|_{\A \circ \HL}$ is complete whenever $\left\|\cdot\right\|_\A$ is so, it suffices to note that $f\mapsto T_f$ is an isometric isomorphism from $(\A \circ \HL(B_X,Y),\left\|\cdot\right\|_{\A \circ \HL})$ onto $(\A(\G(B_X),Y),\left\|\cdot\right\|_\A)$ by \cite[Proposition 2.3 (c)]{Aro24} and Theorem \ref{Theorem: linearization}.

Assume that $[\A,\left\|\cdot\right\|]$ is closed and let us show that so is $[\A \circ \HL,\left\|\cdot\right\|_{\A \circ \HL}]$. Let $f \in \HL(B_X,Y)$ and let $(f_n)$ be a sequence in $\A \circ \HL(B_X,Y)$ satisfying $\lim_{n\to\infty}L(f_n-f)=0$. By Theorem \ref{Theorem: linearization} and \cite[Proposition 2.3 (c)]{Aro24}, we have that $T_{f_n}\in\A(\G(B_X),Y)$ and $\|T_{f_n}-T_f\|=\|T_{f_n-f}\|=L(f_n-f)\to 0$ as $n\to\infty$. Hence $T_f \in \A(\G(B_X),Y)$ and reapplying Theorem \ref{Theorem: linearization} we conclude that $f\in\A\circ\HL(B_X,Y)$.

Finally, to prove the regularity, let us suppose that $[\A,\left\|\cdot\right\|_\A]$ is regular. Let $f \in \HL(B_X,Y)$ and assume that $\kappa_Y \circ f \in \A \circ \HL(B_X,Y^{**})$. Clearly, $T_{\kappa_Y \circ f}=\kappa_Y\circ T_f$. Since $T_{\kappa_Y \circ f}\in\A(\G(B_X),Y^{**})$ by Theorem \ref{Theorem: linearization} and $[\A,\left\|\cdot\right\|_\A]$ is regular, it follows that $T_f\in\A(\G(B_X),Y)$ with $\|T_f\|_\A = \|\kappa_Y \circ T_f\|_\A$. Hence, $f \in \A \circ \HL(B_X,Y)$ and
$$
\|f\|_{\A \circ \HL} = \|T_f\|_\A = \|\kappa_Y \circ T_f\|_\A = \|T_{\kappa_Y \circ f}\|_\A = \|\kappa_Y \circ f\|_{\A \circ \HL}.
$$
\end{proof}


\subsection{Dual ideal of holomorphic Lipschitz mappings}\label{Subsection 3}

Let $[\A,\left\|\cdot\right\|_\A]$ be an $s$-normed operator ideal. Following \cite[4.2]{Pie80}, $[\A^{\mathrm{dual}},\|\cdot\|_{\A^{\mathrm{dual}}}]$ determines an $s$-normed operator ideal, -- called the dual ideal of $\A$ -- , whose components are the spaces 
$$
\A^{\mathrm{dual}}(X,Y) = \{T \in \L(X,Y): T^* \in \A(Y^*,X^*)\}
$$
for any Banach spaces $X$ and $Y$, equipped with the $s$-norm given by
$$
\|T\|_{\A^{\mathrm{dual}}} = \|T^*\|_\A \qquad (T \in \A^{\mathrm{dual}}(X,Y)).
$$

The introduction of a version of this concept for ideals of holomorphic Lipschitz maps requires an equivalent of the notion of adjoint operator in this setting, as follows.

\begin{definition}
Given $f \in \HL(B_X,Y)$, the holomorphic Lipschitz transpose of $f$ is the map $f^t\colon Y^*\to\HL(B_X)$ given by $f^t(y^*)=y^*\circ f$ for all $y^*\in Y^*$.
\end{definition}

To show that $f^t$ satisfies the desired properties, let us recall that $\HL(B_X)$ is isometrically isomorphic to $\G(B_X)^*$ throughout the map $\Lambda_X:\HL(B_X) \to \G(B_X)^*$ defined as
$$
\Lambda_X(g)=T_g\qquad (g\in\HL(B_X)).
$$

\begin{proposition}\label{Proposition: transpose properties}
Let $f \in \HL(B_X,Y)$. Then $f^t \in \L(Y^*,\HL(B_X))$ with $\|f^t\|=L(f)$ and $f^t = \Lambda_X^{-1}\circ (T_f)^*$.
\end{proposition}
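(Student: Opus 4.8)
The plan is to verify the three assertions — linearity and boundedness of $f^t$, the norm identity $\|f^t\|=L(f)$, and the factorization $f^t=\Lambda_X^{-1}\circ(T_f)^*$ — in the order that makes each one feed the next, using the isometric identifications $\HL(B_X,Y)\cong\L(\G(B_X),Y)$ (via $f\mapsto T_f$) and $\HL(B_X)\cong\G(B_X)^*$ (via $\Lambda_X$) recalled just above. First I would check that $f^t$ is well defined: for $y^*\in Y^*$, the composition $y^*\circ f$ is a scalar map on $B_X$ that preserves $0$ and satisfies $L(y^*\circ f)\le\|y^*\|L(f)$, hence $y^*\circ f\in\HL(B_X)$, so $f^t(y^*)\in\HL(B_X)$. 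Linearity of $y^*\mapsto y^*\circ f$ in $y^*$ is immediate, and the estimate $L(f^t(y^*))\le\|y^*\|L(f)$ already gives $f^t\in\L(Y^*,\HL(B_X))$ with $\|f^t\|\le L(f)$.

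Next I would establish the factorization identity $f^t=\Lambda_X^{-1}\circ(T_f)^*$, since the reverse norm inequality will follow cleanly from it. For $y^*\in Y^*$, the element $(T_f)^*(y^*)\in\G(B_X)^*$ acts on $\G(B_X)$ by $\gamma\mapsto\langle y^*,T_f\gamma\rangle$. Applying $\Lambda_X^{-1}$ identifies this functional with the unique $g\in\HL(B_X)$ such that $T_g=(T_f)^*(y^*)$; to see that $g=y^*\circ f$, it suffices to test against the norming set $\delta_X(B_X)$: for $x\in B_X$,
\[
\langle T_g,\delta_X(x)\rangle=\langle (T_f)^*(y^*),\delta_X(x)\rangle=\langle y^*,T_f(\delta_X(x))\rangle=\langle y^*,f(x)\rangle=(y^*\circ f)(x),
\]
while $\langle T_g,\delta_X(x)\rangle=T_g(\delta_X(x))=g(x)$ by definition of $\Lambda_X$ and $\delta_X$. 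Since $\lin(\delta_X(B_X))$ is dense in $\G(B_X)$ and both sides are continuous, $g=y^*\circ f=f^t(y^*)$, which is exactly $f^t=\Lambda_X^{-1}\circ(T_f)^*$.

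Finally, the norm identity: since $\Lambda_X$ is an isometric isomorphism and taking adjoints of bounded operators is isometric, $\|f^t\|=\|\Lambda_X^{-1}\circ(T_f)^*\|=\|(T_f)^*\|=\|T_f\|$, and $\|T_f\|=L(f)$ by \cite[Proposition 2.3 (c)]{Aro24}. This also re-proves $f^t\in\L(Y^*,\HL(B_X))$ without the direct estimate, so one could instead present the factorization first and deduce everything from it. I do not anticipate a genuine obstacle here; the only point requiring a little care is the identification step in the previous paragraph — making sure one correctly unwinds the definitions of $\Lambda_X$, $T_g$, and the duality pairing, and invokes density of $\lin(\delta_X(B_X))$ in $\G(B_X)$ to pass from agreement on $\delta_X(B_X)$ to equality of functionals.
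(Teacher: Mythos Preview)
Your proof is correct. The factorization step and the upper bound $\|f^t\|\le L(f)$ are carried out essentially as in the paper (testing against $\delta_X(B_X)$ and invoking density of $\lin(\delta_X(B_X))$). The one genuine difference is the reverse inequality $\|f^t\|\ge L(f)$: the paper proves it directly by a Hahn--Banach argument --- choosing $x,y\in B_X$ with $\|f(x)-f(y)\|/\|x-y\|>L(f)-\varepsilon$ and a norming functional $z^*\in S_{Y^*}$ for $f(x)-f(y)$ --- whereas you obtain it as a byproduct of the factorization via $\|f^t\|=\|\Lambda_X^{-1}\circ(T_f)^*\|=\|(T_f)^*\|=\|T_f\|=L(f)$. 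Your route is a bit slicker, since it reuses the isometric identifications already in hand and makes the direct $\varepsilon$-argument unnecessary; the paper's route has the minor advantage of being self-contained (the norm equality is established before, and independently of, the factorization). A small cosmetic point: you might make explicit that $y^*\circ f$ is holomorphic (as the paper does), though this is immediate from composition with a bounded linear functional.
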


\begin{proof}
Given $y^* \in Y^*$, note that $(y^* \circ f)(0)=0$ and $y^*\circ f\in\H^\infty(B_X,Y)$ with $(y^*\circ f)'=y^*\circ f'$ and $\left\|y^*\circ f\right\|_\infty\leq \left\|y^*\right\|\left\|f\right\|_\infty$. Moreover, for all $x,y\in B_X$ with $x\neq y$, we have 
$$
\frac{|(y^*\circ f)(x)-(y^*\circ f)(y)|}{\|x-y\|}
\leq \|y^*\|\frac{\|f(x)-f(y)\|}{\|x-y\|}\leq \|y^*\|L(f),
$$
and thus $y^* \circ f\in \HL(B_X,Y)$ with $L(y^* \circ f)\leq \left\|y^*\right\|L(f)$. It follows that $f^t\in\L(Y^*,\HL(B_X))$ with $\|f^t\|\leq L(f)$. On the other hand, let $\varepsilon \in (0,L(f))$ and consider $x,y \in B_X$ satisfying $\|f(x)-f(y)\|/\|x-y\| > L(f)-\varepsilon$. By the Hahn--Banach Theorem, there is $z^* \in Y^*$ with $\|z^*\|=1$ such that $|z^*(f(x)-f(y))|=\|f(x)-f(y)\|$. Thus,
\[\begin{split}
\|f^t\| &\geq \sup_{y^* \neq 0} \frac{L(f^t(y^*))}{\|y^*\|} \geq \frac{L(z^* \circ f)}{\|z^*\|}\geq \frac{|z^*(f(x))-z^*(f(y))|}{\|x-y\|}\\
&= \frac{\|f(x)-f(y)\|}{\|x-y\|} > L(f)-\varepsilon.
\end{split}\]
Just doing $\varepsilon\to 0$ we conclude that $\|f^t\| = L(f)$. Furthermore, 
\[\begin{split}
\langle(\Lambda_X \circ f^t)(y^*),\delta_X(x)\rangle 
&=\langle \Lambda_X(y^* \circ f),\delta_X(x)\rangle = \langle T_{y^* \circ f},\delta_X(x)\rangle\\
&=\langle y^* \circ T_f,\delta_X(x)\rangle = \langle (T_f)^*(y^*),\delta_X(x)\rangle
\end{split}\]
for all $y^* \in Y^*$ and $x \in B_X$. Since $\G(B_X) = \overline{\lin}(\delta_X(B_X))$, we conclude that $f^t = \Lambda_X^{-1}\circ (T_f)^*$.
\end{proof}

$\HL(B_X,Y)$ can be identified with the subspace of $\L(Y^*,\HL(B_X))$ consisting of all weak*-to-weak* continuous linear operators from $Y^*$ into $\HL(B_X))$.

\begin{corollary}\label{teo-4-1}
The correspondence $f\mapsto f^t$ is an isometric isomorphism from $\HL(B_X,Y)$ onto $\L((Y^*,w^*);(\HL(B_X),w^*))$.
\end{corollary}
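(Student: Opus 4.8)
The plan is to verify that $f \mapsto f^t$ is well-defined into $\L((Y^*,w^*);(\HL(B_X),w^*))$, that it is isometric, and that it is surjective onto this space. The first two claims come almost for free from the preceding results: by Proposition \ref{Proposition: transpose properties}, $f^t = \Lambda_X^{-1}\circ (T_f)^*$ with $\|f^t\| = L(f)$, so linearity and the isometry are already in hand. Moreover $(T_f)^*\colon Y^* \to \G(B_X)^*$ is weak*-to-weak* continuous because it is the adjoint of a bounded operator $T_f\in\L(\G(B_X),Y)$ (adjoints are always weak*-to-weak* continuous), and $\Lambda_X^{-1}\colon \G(B_X)^* \to \HL(B_X)$ is a weak*-to-weak* homeomorphism since it is the inverse of the canonical isometric identification $\Lambda_X$ of $\HL(B_X)$ with the dual $\G(B_X)^*$ (the weak* topology on $\HL(B_X)$ is, by definition/convention, transported from $\G(B_X)^*$ via $\Lambda_X$). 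Composing, $f^t$ lands in $\L((Y^*,w^*);(\HL(B_X),w^*))$.

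For surjectivity, I would start from an arbitrary $\Phi \in \L((Y^*,w^*);(\HL(B_X),w^*))$ and produce $f \in \HL(B_X,Y)$ with $f^t = \Phi$. Pushing $\Phi$ through $\Lambda_X$, the operator $\Lambda_X \circ \Phi\colon (Y^*,w^*)\to(\G(B_X)^*,w^*)$ is weak*-to-weak* continuous, hence it is the adjoint of a (unique) bounded operator $S\colon \G(B_X)\to Y$; this is the standard fact that weak*-to-weak* continuous operators between dual spaces are exactly the adjoints of bounded operators (see e.g.\ \cite{Meg-98}). Set $f := S \circ \delta_X \in \HL(B_X,Y)$, which makes sense since $\delta_X \in \HL(B_X,\G(B_X))$. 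Then $T_f = S$ by the uniqueness in the universal property of $\G(B_X)$ (as $S\circ\delta_X = f = T_f\circ\delta_X$ and $\lin(\delta_X(B_X))$ is dense in $\G(B_X)$), so $(T_f)^* = S^* = \Lambda_X\circ\Phi$, and therefore $f^t = \Lambda_X^{-1}\circ(T_f)^* = \Phi$ by Proposition \ref{Proposition: transpose properties}. This establishes that the map is onto.

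The only genuinely non-routine point is the identification of $\L((Y^*,w^*);(\HL(B_X),w^*))$ with $\L((Y^*,w^*);(\G(B_X)^*,w^*))$ and then with $\L(\G(B_X),Y)$ via adjoints; everything else is bookkeeping with the already-proved Proposition \ref{Proposition: transpose properties} and the linearization results of \cite{Aro24}. I would phrase the argument as the commuting diagram $f \mapsto T_f \mapsto (T_f)^* \mapsto \Lambda_X^{-1}\circ(T_f)^* = f^t$, where the first arrow is the isometric isomorphism $\HL(B_X,Y)\cong\L(\G(B_X),Y)$, the second is the isometric bijection onto $w^*$-$w^*$-continuous operators $\G(B_X)^{**}\supseteq Y^* \to \G(B_X)^*$ wait — more precisely onto $\L((Y^*,w^*);(\G(B_X)^*,w^*))$, and the third is the weak*-homeomorphic isometry $\Lambda_X^{-1}$. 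Composing these three isometric isomorphisms yields the claim.
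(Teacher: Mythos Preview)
Your proof is correct and follows essentially the same route as the paper's: both use Proposition \ref{Proposition: transpose properties} to write $f^t=\Lambda_X^{-1}\circ(T_f)^*$, invoke the standard fact that adjoints are exactly the weak*-to-weak* continuous operators, and for surjectivity push an arbitrary $\Phi$ through $\Lambda_X$ to obtain $S^*=\Lambda_X\circ\Phi$ for some $S\in\L(\G(B_X),Y)$, then recover $f$ via the linearization isomorphism of \cite{Aro24}. The only cosmetic difference is that you define $f:=S\circ\delta_X$ and argue $T_f=S$ by uniqueness, whereas the paper quotes \cite[Proposition 2.3 (c)]{Aro24} directly; you should also clean up the stray ``wait --- more precisely'' aside in your final paragraph before submission.
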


\begin{proof}
Let $f\in\HL(B_X,Y)$. Hence $f^t=\Lambda_X^{-1}\circ (T_f)^*\in\L((Y^*,w^*);(\HL(B_X)),w^*)$ by Proposition \ref{Proposition: transpose properties} and \cite[Theorem 3.1.11]{Meg-98}. We have $||f^t||=L(f)$, and in order to prove that the map in the statement is onto, let $T\in\L((Y^*,w^*);(\HL(B_X)),w^*)$. Then $\Lambda_X\circ T\in\L((Y^*,w^*);(\G(B_X)^*,w^*))$ and therefore $S^*=\Lambda_X\circ T$ for some $S\in\L(\G(B_X),Y)$. By \cite[Proposition 2.3 (c)]{Aro24}, there exists $f\in\HL(B_X,Y)$ such that $T_f=S$ and we conclude that $T=\Lambda_X^{-1}\circ(T_f)^*=f^t$. 
\end{proof}

We now introduce a notion of holomorphic Lipschitz dual ideal of an operator ideal as desired.

\begin{definition}
Let $[\A,\left\|\cdot\right\|_\A]$ be an $s$-normed operator ideal. Given complex Banach spaces $X$ and $Y$, define 
$$
\A^{\HL\text{-}\mathrm{dual}}(B_X,Y) = \{f \in \HL(B_X,Y)\colon f^t \in \A(Y^*,\HL(B_X))\},
$$
and 
$$
\|f\|_{\A^{\HL\text{-}\mathrm{dual}}} = \|f^t\|_\A\qquad (f\in \A^{\HL\text{-}\mathrm{dual}}(B_X,Y)).
$$
\end{definition}

The pair $[\A^{\HL\text{-}\mathrm{dual}},\left\|\cdot\right\|_{\A^{\HL\text{-}\mathrm{dual}}}]$ is an $s$-normed operator ideal of composition type called the holomorphic Lipschitz dual ideal of $\A$. To be more precise, we have the following.

\begin{theorem}\label{Theorem: dual ideal}
Let $[\A,\left\|\cdot\right\|_\A]$ be an $s$-normed operator ideal. Then 
$$
[\A^{\HL\text{-}\mathrm{dual}},\left\|\cdot\right\|_{\A^{\HL\text{-}\mathrm{dual}}}] = [\A^{\mathrm{dual}}\circ \HL,\left\|\cdot\right\|_{\A^{\mathrm{dual}}\circ \HL}].
$$
\end{theorem}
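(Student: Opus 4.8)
The plan is to collapse both sides of the claimed identity onto the same object, namely the adjoint $(T_f)^*$ of the linearization of $f$, using Theorem \ref{Theorem: linearization} (applied to the operator ideal $\A^{\mathrm{dual}}$) on the right-hand side and the factorization $f^t=\Lambda_X^{-1}\circ(T_f)^*$ from Proposition \ref{Proposition: transpose properties} on the left-hand side. Since $[\A^{\mathrm{dual}},\|\cdot\|_{\A^{\mathrm{dual}}}]$ is itself an $s$-normed operator ideal (by \cite[4.2]{Pie80}), Corollary \ref{Theorem: composition ideal} already guarantees that $[\A^{\mathrm{dual}}\circ\HL,\|\cdot\|_{\A^{\mathrm{dual}}\circ\HL}]$ is an $s$-normed holomorphic Lipschitz ideal, so it suffices to match the components and the $s$-norms.

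First I would fix complex Banach spaces $X$ and $Y$ and a map $f\in\HL(B_X,Y)$, and unwind the right-hand side: by Theorem \ref{Theorem: linearization} applied to $\A^{\mathrm{dual}}$, we have $f\in\A^{\mathrm{dual}}\circ\HL(B_X,Y)$ if and only if $T_f\in\A^{\mathrm{dual}}(\G(B_X),Y)$, and in that case $\|f\|_{\A^{\mathrm{dual}}\circ\HL}=\|T_f\|_{\A^{\mathrm{dual}}}$. By the very definition of the dual operator ideal, $T_f\in\A^{\mathrm{dual}}(\G(B_X),Y)$ is equivalent to $(T_f)^*\in\A(Y^*,\G(B_X)^*)$, with $\|T_f\|_{\A^{\mathrm{dual}}}=\|(T_f)^*\|_\A$. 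Then I would unwind the left-hand side: by definition $f\in\A^{\HL\text{-}\mathrm{dual}}(B_X,Y)$ means $f^t\in\A(Y^*,\HL(B_X))$, with $\|f\|_{\A^{\HL\text{-}\mathrm{dual}}}=\|f^t\|_\A$, and Proposition \ref{Proposition: transpose properties} gives $f^t=\Lambda_X^{-1}\circ(T_f)^*$, where $\Lambda_X\colon\HL(B_X)\to\G(B_X)^*$ is an isometric isomorphism.

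The remaining point is that composing with an isometric isomorphism changes neither membership in $\A$ nor the $\A$-norm. This is a one-line consequence of the ideal property of $[\A,\|\cdot\|_\A]$: from $f^t=\Lambda_X^{-1}\circ(T_f)^*$ we get $\|f^t\|_\A\le\|\Lambda_X^{-1}\|\,\|(T_f)^*\|_\A=\|(T_f)^*\|_\A$ whenever $(T_f)^*\in\A(Y^*,\G(B_X)^*)$, and conversely from $(T_f)^*=\Lambda_X\circ f^t$ we get $\|(T_f)^*\|_\A\le\|\Lambda_X\|\,\|f^t\|_\A=\|f^t\|_\A$ whenever $f^t\in\A(Y^*,\HL(B_X))$; hence $f^t\in\A(Y^*,\HL(B_X))\iff(T_f)^*\in\A(Y^*,\G(B_X)^*)$ and $\|f^t\|_\A=\|(T_f)^*\|_\A$. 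Chaining the two strings of equivalences yields $\A^{\HL\text{-}\mathrm{dual}}(B_X,Y)=\A^{\mathrm{dual}}\circ\HL(B_X,Y)$ as sets, and the norm identities combine to $\|f\|_{\A^{\HL\text{-}\mathrm{dual}}}=\|(T_f)^*\|_\A=\|f\|_{\A^{\mathrm{dual}}\circ\HL}$; since $X$ and $Y$ are arbitrary, the two $s$-normed holomorphic Lipschitz ideals coincide.

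I do not expect a genuine obstacle here: the proof is a bookkeeping exercise knitting together Theorem \ref{Theorem: linearization}, the definition of $\A^{\mathrm{dual}}$, and Proposition \ref{Proposition: transpose properties}. The only spot that deserves a word of care is the observation about isometric isomorphisms acting trivially on operator-ideal norms, together with the fact that $(T_f)^*$ genuinely lands in $\A(Y^*,\G(B_X)^*)$ rather than in some formally larger space — but this is already encoded in the statements of Theorem \ref{Theorem: linearization} and Proposition \ref{Proposition: transpose properties}, which rest on $\G(B_X)=\overline{\lin}(\delta_X(B_X))$.
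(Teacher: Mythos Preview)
Your argument is correct and is essentially the paper's own proof: both directions rest on the identity $f^t=\Lambda_X^{-1}\circ(T_f)^*$ from Proposition \ref{Proposition: transpose properties} together with the ideal property of $\A$ and the linearization Theorem \ref{Theorem: linearization} applied to $\A^{\mathrm{dual}}$. The only cosmetic difference is that for the inclusion $\A^{\mathrm{dual}}\circ\HL\subseteq\A^{\HL\text{-}\mathrm{dual}}$ the paper works with an arbitrary factorization $f=T\circ h$ and checks $f^t=h^t\circ T^*$ directly before passing to the infimum, whereas you invoke Theorem \ref{Theorem: linearization} at the outset to reduce immediately to $T_f$; this is a harmless streamlining.
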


\begin{proof}
If $f\in\A^{\HL\text{-}\mathrm{dual}}(B_X,Y)$, then $f^t\in\A(Y^*,\HL(B_X))$. By \cite[Proposition 2.3 (c)]{Aro24}, there exists $T_f\in\L(\G(B_X),Y)$ such that $f=T_f \circ \delta_X$. Since $(T_f)^*=\Lambda_X\circ f^t$ by Proposition \ref{Proposition: transpose properties}, the ideal property of $\A$ allows us to ensure that $(T_f)^*\in\A(Y^*,\G(B_X)^*)$ with $\|(T_f)^*\|_\A \leq\|\Lambda_X\|\|f^t\|_\A = \|f^t\|_\A$. Hence, $T_f\in\A^{\mathrm{dual}}(\G(B_X),Y)$ with $\|T_f\|_{\A^{\mathrm{dual}}} = \|(T_f)^*\|_\A $. We conclude that $f\in\A^{\mathrm{dual}}\circ\HL(B_X,Y)$ with $\|f\|_{\A^{\mathrm{dual}}\circ \HL} = \|T_f\|_{\A^{\mathrm{dual}}}$ by Theorem \ref{Theorem: linearization A-compact} and thus $\|f\|_{\A^{\mathrm{dual}}\circ\HL}\leq\|f\|_{\A^{\HL\text{-}\mathrm{dual}}}$. 

Conversely, if $f\in\A^{\mathrm{dual}}\circ\HL(B_X,Y)$, then $f=T\circ h$, where $T \in \A^{\mathrm{dual}}(Z,Y)$ and $h \in \HL(B_X,Z)$ for some complex Banach space $Z$. Notice that $f^t = h^t \circ T^*$ since 
\[\begin{split}
f^t(y^*) &= (T \circ h)^t(y^*) = y^* \circ (T \circ h) = (y^* \circ T) \circ h\\
&=T^*(y^*)\circ h = h^t(T^*(y^*)) = (h^t \circ T^*)(y^*)
\end{split}\]
for all $y^*\in Y^*$. Hence $f^t\in\A(Y^*,\HL(B_X))$ by the ideal property of $\A$ since $T^*\in\A(Y^*,Z^*)$. Therefore $f\in\A^{\HL\text{-}\mathrm{dual}}(B_X,Y)$, and from the inequality
$$
\|f\|_{\A^{\HL\text{-}\mathrm{dual}}} = \|f^t\|_\A = \|h^t \circ T^*\|_\A \leq \|h^t\|\|T^*\|_\A = L(h)\|T\|_{\A^{\mathrm{dual}}},
$$
we infer that $\|f\|_{\A^{\HL\text{-}\mathrm{dual}}} \leq \|f\|_{\A^{\mathrm{dual}}\circ\HL}$ by taking the infimum over all such representations of $f$.
\end{proof}


\section{$\A$-Compact holomorphic Lipschitz mappings}\label{Section 2}

Following the comments in \cite[p. 1094]{LasTur18}, the original definitions of $\A$-compactness and its measure $m_\A$ were given in \cite{CarSte84,LasTur13} for normed operator ideals, but they can be easily extended for $s$-normed operator ideals with $s\in (0,1]$ and their properties also hold with slight modifications.

Given an $s$-normed operator ideal $\A$, our aim in this section is to introduce a variant for holomorphic Lipschitz maps of the concept of $\A$-compact linear operators between Banach spaces \cite{CarSte84,LasTur13}. Towards this end, we denote the Lipschitz image of a map $f\colon B_X\to Y$ by 
$$
\Im_{L}(f):=\left\{\frac{f(x)-f(y)}{\|x-y\|}\colon x,y \in B_X,\, x\neq y\right\}\subseteq Y.
$$ 
Notice that $f$ is Lipschitz if and only if $\Im_{L}(f)$ is a bounded subset of $Y$. This motivates the following.

\begin{definition}\label{Definition: A-compact holomorphic Lipschitz}
Let $[\A,\left\|\cdot\right\|_\A]$ be an $s$-normed operator ideal. Given complex Banach spaces $X$ and $Y$, a map $f\in\HL(B_X,Y)$ is said to be $\A$-compact holomorphic Lipschitz if $\Im_L(f)$ is a relatively $\A$-compact subset of $Y$. If we denote by $\HL^{\K_\A}(B_X,Y)$ the set of all $\A$-compact holomorphic Lipschitz maps from $B_X$ into $Y$, we define
$$
\|f\|_{\HL^{\K_\A}} = m_\A(\Im_{L}(f)) \qquad (f \in \HL^{\K_\A}(B_X,Y)).
$$
\end{definition}

The following characterizations of $\A$-compact holomorphic Lipschitz maps are immediate in light of \cite[Lemma 1.1 and Theorem 1.1]{CarSte84}. 

Let us recall (see \cite[Definition 1.2 and Lemma 1.2]{CarSte84}) that a sequence $(x_n)_{n=1}^\infty$ in $X$ is called $\A$-null if there is a Banach space $Y$, an operator $T \in \A(Y,X)$ and a null sequence $(y_n)_{n=1}^\infty$ in $Y$ satisfying that $x_n = T(y_n)$ for all $n \in \mathbb{N}$.

\begin{proposition}\label{Theorem: characterizations A-compact}
Let $[\A,\left\|\cdot\right\|_\A]$ be an $s$-normed operator ideal. The following are equivalent:
\begin{itemize}
\item[(i)] $f \in \HL^{\K_\A}(B_X,Y)$.
\item[(ii)] There are a complex Banach space $Z$ and an operator $T \in \A(Z,Y)$ so that for every $\varepsilon > 0$, there exists $(y^{\varepsilon }_j)_{j=1}^{k_\varepsilon}$ in $Y$ such that $\Im_{L}(f) \subseteq \bigcup_{j=1}^{k_\varepsilon} \{y^{\varepsilon}_j + \varepsilon T(\overline{B}_Z)\}$.
\item[(iii)] There is an $\A$-null sequence $(y_n)_{n=1}^\infty$ in $Y$ so that $\Im_{L}(f)\subseteq\co((y_n)_{n=1}^\infty)$.
\end{itemize}
$\hfill \square$
\end{proposition}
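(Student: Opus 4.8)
The plan is to read the three equivalences off the abstract description of relatively $\A$-compact sets due to Carl and Stephani, specialized to the set $K:=\Im_L(f)$. The only preliminary remark needed is that, $f$ being Lipschitz, $\Im_L(f)$ is a norm-bounded subset of $Y$, so it is meaningful to ask whether it is relatively $\A$-compact; and by Definition \ref{Definition: A-compact holomorphic Lipschitz}, assertion (i) is literally the statement $\Im_L(f)\in\K_\A(Y)$. Hence the proposition is exactly the instance $K=\Im_L(f)$ of the chain ``$K$ is relatively $\A$-compact if and only if $K$ has the covering property in (ii) if and only if $K$ is contained in the convex hull of an $\A$-null sequence'', which is the content of \cite[Lemma 1.1 and Theorem 1.1]{CarSte84}, with \cite[Definition 1.2 and Lemma 1.2]{CarSte84} supplying the notion of $\A$-null sequence.

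For completeness I would still record the two elementary implications explicitly. For (i)$\Rightarrow$(ii): by definition there are a complex Banach space $Z$, an operator $T\in\A(Z,Y)$ and a relatively compact set $M\subseteq Z$ with $\Im_L(f)\subseteq T(M)$; since $M$ is totally bounded, given $\varepsilon>0$ one picks a finite set $z_1,\dots,z_{k_\varepsilon}\in Z$ with $M\subseteq\bigcup_{j}(z_j+\varepsilon\overline{B}_Z)$, so that $\Im_L(f)\subseteq T(M)\subseteq\bigcup_{j}(T(z_j)+\varepsilon T(\overline{B}_Z))$, and (ii) holds with $y^\varepsilon_j:=T(z_j)$. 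For (iii)$\Rightarrow$(i): if $(y_n)$ is $\A$-null, say $y_n=T(x_n)$ with $T\in\A(Z,Y)$ and $x_n\to 0$ in $Z$, then $M:=\overline{\co}(\{x_n:n\in\mathbb{N}\}\cup\{0\})$ is a compact subset of $Z$, being the closed convex hull of a compact set in a Banach space, and since $T$ is linear $\co((y_n))=T(\co(\{x_n:n\in\mathbb{N}\}))\subseteq T(M)$; hence $\Im_L(f)\subseteq\co((y_n))$ forces $\Im_L(f)\in\K_\A(Y)$. The remaining implications (ii)$\Rightarrow$(i) and (i)$\Rightarrow$(iii) are precisely \cite[Lemma 1.1]{CarSte84} and \cite[Theorem 1.1]{CarSte84}.

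I do not anticipate any genuine obstacle: all the substance is already packaged in those two results, and the proposition merely records that they transfer verbatim once the abstract relatively $\A$-compact set is taken to be the Lipschitz image of a holomorphic Lipschitz map. If one insisted on a self-contained argument, the only non-routine ingredient would be the Grothendieck-type step hidden in \cite[Theorem 1.1]{CarSte84}: manufacturing, out of the finite $2^{-n}$-nets furnished by (ii), a single null sequence in $Z$ whose image under $T$ is $\A$-null and whose convex hull already contains $\Im_L(f)$. Everything else reduces to total boundedness, linearity of $T$, and compactness of closed convex hulls of compact subsets of Banach spaces.
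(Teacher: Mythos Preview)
Your proposal is correct and matches the paper's approach exactly: the paper itself gives no proof, merely recording that the equivalences are immediate from \cite[Lemma 1.1 and Theorem 1.1]{CarSte84} applied to the set $\Im_L(f)$, which is precisely what you do. If anything, you provide more detail than the paper by spelling out the elementary implications (i)$\Rightarrow$(ii) and (iii)$\Rightarrow$(i).
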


We also omit the proof of the following since it is a direct application of \cite[Theorem 1.2, Proposition 1.8 and Corollary 1.9]{LasTur13}. 

\begin{proposition}\label{Theorem: characterizations A-compact-2}
Let $[\A,\left\|\cdot\right\|_\A]$ be an $s$-normed operator ideal. The following are equivalent:
\begin{itemize}
\item[(i)] $f \in \HL^{\K_\A}(B_X,Y)$.
\item[(ii)] There are a complex Banach space $Z$, two operators $T \in \A(Z,Y)$ and $R \in \overline{\mathcal{F}}(\ell_1,Z)$ and a relatively compact set $K\subseteq\overline{B}_{\ell_1}$ so that $\Im_{L}(f) \subseteq (T \circ R)(K)$.  
\item[(iii)] There are an operator $T \in \A(\ell_1,Y)$ and a relatively compact set $K\subseteq\overline{B}_{\ell_1}$ so that $\Im_{L}(f) \subseteq T(K)$. 
\item[(iv)] $f \in \HL^{\K_{\A \circ \overline{\mathcal{F}}}}(B_X,Y)$.
\end{itemize}
In such a case, we have 
\[\begin{split}	
\|f\|_{\HL^{\K_\A}} &= \inf\{\|T\|_\A\left\|R\right\|\colon T \in \A(Z,Y),\, R \in \overline{\mathcal{F}}(\ell_1,Z),\, K \subseteq \overline{B}_{\ell_1},\, \Im_{L}(f) \subseteq (T\circ R)(K)\}\\
&=\inf\{\|T\|_\A\colon T \in \A(\ell_1,Y),\, K \subseteq \overline{B}_{\ell_1},\, \Im_{L}(f) \subseteq T(K)\}\\
&=m_{\A \circ \overline{\mathcal{F}}}(\Im_{L}(f)).
\end{split}\]
$\hfill \square$
\end{proposition}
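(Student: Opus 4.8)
The plan is to deduce everything from the corresponding linear results in \cite{LasTur13} applied not to a single operator but to the bounded set $\Im_L(f)$, together with the elementary observation that $\Im_L(f)$ is always a bounded subset of $Y$ (indeed contained in $L(f)\,\overline{B}_Y$), so all the machinery of \cite{LasTur13} for relatively $\A$-compact sets applies verbatim. Concretely, \cite[Theorem 1.2]{LasTur13} characterizes relatively $\A$-compact sets $K$ in a Banach space $Y$: $K$ is relatively $\A$-compact if and only if there exist a Banach space $Z$, operators $T\in\A(Z,Y)$ and $R\in\overline{\F}(\ell_1,Z)$ and a relatively compact $K'\subseteq\overline{B}_{\ell_1}$ with $K\subseteq (T\circ R)(K')$; and \cite[Proposition 1.8 and Corollary 1.9]{LasTur13} refine this to a single operator $T\in\A(\ell_1,Y)$ with $K\subseteq T(K')$, $K'\subseteq\overline{B}_{\ell_1}$ relatively compact, simultaneously giving the equalities of the associated measures $m_\A(K)=\inf\{\|T\|_\A\|R\|\}=\inf\{\|T\|_\A\}=m_{\A\circ\overline{\F}}(K)$. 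Applying these three facts with $K=\Im_L(f)$ gives immediately the equivalences $(i)\Leftrightarrow(ii)\Leftrightarrow(iii)$ together with the first two displayed equalities, and the last equality $m_{\A\circ\overline{\F}}(\Im_L(f))=\inf\{\|T\|_\A\}$ is part of the same \cite[Corollary 1.9]{LasTur13}.

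For the remaining equivalence with $(iv)$, I would argue that $(iv)$ says precisely $\Im_L(f)\in\K_{\A\circ\overline{\F}}(Y)$, i.e.\ $\Im_L(f)$ is relatively $(\A\circ\overline{\F})$-compact, and that by definition $\|f\|_{\HL^{\K_{\A\circ\overline{\F}}}}=m_{\A\circ\overline{\F}}(\Im_L(f))$; so $(i)\Leftrightarrow(iv)$ and the norm identity both reduce to the statement $\K_\A(Y)=\K_{\A\circ\overline{\F}}(Y)$ with $m_\A=m_{\A\circ\overline{\F}}$ on this common collection of sets, which is again exactly the content of \cite[Corollary 1.9]{LasTur13}. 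Here one should be mildly careful: \cite{LasTur13} is stated for normed operator ideals, but as noted in the paragraph preceding this proposition (following \cite[p.~1094]{LasTur18}), the definitions of $\A$-compactness and $m_\A$ and all of these characterizations extend without change to $s$-normed ideals, so this poses no real obstacle.

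I would therefore write the proof as essentially one sentence per equivalence: "$(i)\Leftrightarrow(ii)$ by \cite[Theorem 1.2]{LasTur13} applied to the bounded set $\Im_L(f)$; $(ii)\Leftrightarrow(iii)$ and the equality of the first two infima by \cite[Proposition 1.8]{LasTur13}; and $(i)\Leftrightarrow(iv)$ together with $\|f\|_{\HL^{\K_\A}}=m_{\A\circ\overline{\F}}(\Im_L(f))=\|f\|_{\HL^{\K_{\A\circ\overline{\F}}}}$ by \cite[Corollary 1.9]{LasTur13} and Definition \ref{Definition: A-compact holomorphic Lipschitz}." The only genuinely non-routine point worth a line is verifying $\Im_L(f)$ is bounded — which is the observation already recorded in the text just before Definition \ref{Definition: A-compact holomorphic Lipschitz}, namely that $f$ is Lipschitz iff $\Im_L(f)$ is bounded — so that the cited set-level results are applicable. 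Since the paper has chosen to "omit the proof" here, the honest summary is that there is no obstacle at all: the statement is a transcription of the set-theoretic characterizations of $\A$-compactness from \cite{CarSte84,LasTur13} with the distinguished set taken to be $\Im_L(f)$, and the main (trivial) step is simply to notice that this set is bounded so that those characterizations apply.

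\medskip

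\noindent\emph{Remark on the expected obstacle.} If one insisted on a self-contained proof rather than a citation, the one place requiring a small amount of work would be the passage from a factorization through an arbitrary $Z$ (item (ii)) to a factorization through $\ell_1$ with a relatively compact $K\subseteq\overline{B}_{\ell_1}$ (item (iii)): this uses that any relatively compact subset of a Banach space $Z$ is the image under some $R\in\overline{\F}(\ell_1,Z)$ of a relatively compact subset of $\overline{B}_{\ell_1}$ (a Grothendieck-type factorization of compact sets through $\ell_1$), and then absorbs $R$ into $T$ using the ideal property and $\|T\circ R\|_\A\le\|T\|_\A\|R\|$. All of this is already done in \cite{LasTur13}, so in the present paper it is legitimate to simply invoke it.
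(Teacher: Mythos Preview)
Your proposal is correct and matches the paper's approach exactly: the paper omits the proof entirely, stating only that it is a direct application of \cite[Theorem 1.2, Proposition 1.8 and Corollary 1.9]{LasTur13}, which is precisely what you invoke (applied to the bounded set $\Im_L(f)$). Your write-up is in fact more detailed than the paper's, since you spell out which cited result gives which equivalence and note the extension to $s$-normed ideals via \cite[p.~1094]{LasTur18}.
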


Another useful characterization of $\A$-compact holomorphic Lipschitz mappings can be established in terms of their linearizations. Compare it to its polynomial version \cite[Proposition 1]{Tur16}.

\begin{theorem}\label{Theorem: linearization A-compact}
Let $[\A,\left\|\cdot\right\|_\A]$ be an $s$-normed operator ideal and $f\in\HL(B_X,Y)$. The following are equivalent:
\begin{enumerate}
	\item $f$ belongs to $\HL^{\K_\A}(B_X,Y)$.
	\item $T_f$ belongs to $\K_\A(\G(B_X),Y)$.
\end{enumerate}
In such a case, $\|f\|_{\HL^{\K_\A}} = \|T_f\|_{\K_\A}$. As a consequence, $f \mapsto T_f$ is an isometric isomorphism from $(\HL^{\K_\A}(B_X,Y),\|\cdot\|_{\HL^{\K_\A}})$ onto $(\K_\A(\G(B_X),Y),\left\|\cdot\right\|_{\K_\A})$.
\end{theorem}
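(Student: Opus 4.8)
The plan is to connect the Lipschitz image $\Im_L(f)$ of $f$ with the image $T_f(\overline{B}_{\G(B_X)})$ of its linearization, and then invoke the definition of the ideal $\K_\A$ together with the identity $\|T_f\|_{\K_\A} = m_\A(T_f(\overline{B}_{\G(B_X)}))$. First I would recall that $\delta_X\colon B_X\to\G(B_X)$ is an isometry onto its image with $L(\delta_X)=1$ and that $\G(B_X)=\overline{\lin}(\delta_X(B_X))$ by \cite[Proposition 2.3]{Aro24}. From $f=T_f\circ\delta_X$ we get, for $x\neq y$ in $B_X$,
\[
\frac{f(x)-f(y)}{\|x-y\|} = T_f\!\left(\frac{\delta_X(x)-\delta_X(y)}{\|x-y\|}\right),
\]
and since $\delta_X$ is an isometry, the vectors $(\delta_X(x)-\delta_X(y))/\|x-y\|$ all lie in $\overline{B}_{\G(B_X)}$. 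Hence $\Im_L(f)\subseteq T_f(\overline{B}_{\G(B_X)})$, which already gives the implication $(ii)\Rightarrow(i)$: if $T_f\in\K_\A(\G(B_X),Y)$, then $T_f(\overline{B}_{\G(B_X)})$ is relatively $\A$-compact, and a subset of a relatively $\A$-compact set is again relatively $\A$-compact (a standard property from \cite{CarSte84}), so $\Im_L(f)\in\K_\A(Y)$ and $\|f\|_{\HL^{\K_\A}}=m_\A(\Im_L(f))\leq m_\A(T_f(\overline{B}_{\G(B_X)}))=\|T_f\|_{\K_\A}$.

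For the converse $(i)\Rightarrow(ii)$, the point is to show that $\overline{B}_{\G(B_X)}$ is contained in (the closed absolutely convex hull of) a scaled copy of the set $D:=\{(\delta_X(x)-\delta_X(y))/\|x-y\|\colon x,y\in B_X,\ x\neq y\}$, so that $T_f(\overline{B}_{\G(B_X)})$ is controlled by $\overline{\abco}(\Im_L(f))$. The natural route is: the set $D\cup(-D)$ is absolutely convex-generating for a dense subspace — indeed $\lin(\delta_X(B_X))=\lin(D\cup\{\delta_X(x):x\in B_X\})$, but since $\delta_X(0)=0$ one has $\delta_X(x)=\|x-0\|\cdot(\delta_X(x)-\delta_X(0))/\|x-0\|$, showing $\delta_X(x)\in\R_{>0}\cdot D$; so $\lin(D)$ is dense in $\G(B_X)$. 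Then I would appeal to the fact, implicit in \cite{Aro24}, that $\overline{B}_{\G(B_X)}=\overline{\abco}(\delta_X(B_X))$, and combine it with $\delta_X(B_X)\subseteq\overline{\abco}(D)$ to get $\overline{B}_{\G(B_X)}=\overline{\abco}(D)$. Now $\Im_L(f)=T_f(D)$, and since $T_f$ is linear and bounded, $T_f(\overline{\abco}(D))\subseteq\overline{\abco}(T_f(D))=\overline{\abco}(\Im_L(f))$. Using that relatively $\A$-compact sets are stable under taking closed absolutely convex hulls with $m_\A(\overline{\abco}(K))=m_\A(K)$ (see \cite[Proposition 1.8 or the remarks after]{LasTur13}, valid also in the $s$-normed setting), we conclude $T_f(\overline{B}_{\G(B_X)})$ is relatively $\A$-compact with $m_\A(T_f(\overline{B}_{\G(B_X)}))\leq m_\A(\Im_L(f))$, i.e. $T_f\in\K_\A(\G(B_X),Y)$ with $\|T_f\|_{\K_\A}\leq\|f\|_{\HL^{\K_\A}}$.

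Combining the two inequalities yields $\|f\|_{\HL^{\K_\A}}=\|T_f\|_{\K_\A}$, and the final ``as a consequence'' clause then follows immediately: by \cite[Proposition 2.3 (c)]{Aro24} the map $f\mapsto T_f$ is already a bijection from $\HL(B_X,Y)$ onto $\L(\G(B_X),Y)$, and the equivalence $(i)\Leftrightarrow(ii)$ shows it restricts to a bijection $\HL^{\K_\A}(B_X,Y)\to\K_\A(\G(B_X),Y)$; linearity of this correspondence is inherited from $f\mapsto T_f$, and the norm identity just proved makes it an isometry, hence an isometric isomorphism. I expect the main obstacle to be the identity $\overline{B}_{\G(B_X)}=\overline{\abco}(\Im_L(\delta_X))$ (equivalently $\overline{\abco}(D)$): one must be careful that $\Im_L(\delta_X)$ genuinely generates the whole unit ball and not merely a dense subset of a possibly smaller absolutely convex set, and that the relevant closure/hull operations are the right ones for the $\A$-compactness machinery to apply — this is where the precise statements from \cite{CarSte84,LasTur13} about stability of $\K_\A(Y)$ under subsets and closed absolutely convex hulls, and the behaviour of $m_\A$, need to be cited carefully, including the extension to $s\in(0,1]$ noted at the start of this section.
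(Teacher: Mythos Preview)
Your overall architecture is exactly the paper's: establish the chain
\[
\Im_L(f)=T_f(\Im_L(\delta_X))\subseteq T_f(\overline{B}_{\G(B_X)})\subseteq\overline{\abco}(\Im_L(f)),
\]
use the first inclusion for $(ii)\Rightarrow(i)$, the second for $(i)\Rightarrow(ii)$, and invoke the stability of relative $\A$-compactness under subsets and closed absolutely convex hulls together with $m_\A(\overline{\abco}(K))=m_\A(K)$ (the paper cites \cite[p.~1094]{LasTur18} for the latter). The final isometric-isomorphism clause is handled just as you say, via \cite[Proposition 2.3 (c)]{Aro24}.

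There is, however, a real gap in your justification of the pivotal identity $\overline{B}_{\G(B_X)}=\overline{\abco}(\Im_L(\delta_X))$. Your route goes through the claim that $\overline{B}_{\G(B_X)}=\overline{\abco}(\delta_X(B_X))$, and this is \emph{false} in general. By bipolar duality in the pair $(\G(B_X),\HL(B_X))$, the polar of $\delta_X(B_X)$ is $\{g\in\HL(B_X):\|g\|_\infty\le 1\}$, which strictly contains $\{g:L(g)\le 1\}$ whenever $L$ and $\|\cdot\|_\infty$ differ on $\HL(B_X)$ --- and they do: already for $X=\C$ and $g(z)=z^2$ one has $L(g)=2$ but $\|g\|_\infty=1$. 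Hence $\overline{\abco}(\delta_X(B_X))$ is in general a \emph{proper} subset of $\overline{B}_{\G(B_X)}$, and your deduction $\overline{B}_{\G(B_X)}\subseteq\overline{\abco}(D)$ collapses.

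The fix is simply to cite the identity $\overline{B}_{\G(B_X)}=\overline{\abco}(\Im_L(\delta_X))$ directly from \cite[Proposition 2.6]{Aro24}, as the paper does; you correctly flagged this identity as the main obstacle, and indeed it is not something one can bootstrap from $\overline{\abco}(\delta_X(B_X))$. With that single citation in place of your attempted derivation, your proof coincides with the paper's.
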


\begin{proof} 
Note first that an application of \cite[Propositions 2.3 and 2.6]{Aro24} yields the relations:
\[\begin{split}
\Im_{L}(f)=T_f(\Im_{L}(\delta_X)) &\subseteq T_f(\overline{\abco}(\Im_{L}(\delta_X)))=T_f(\overline{B}_{\G(B_X)})\\
                                  &\subseteq \overline{\abco}(T_f(\Im_{L}(\delta_X))) = \overline{\abco}(\Im_{L}(f)).
\end{split}\]

$(i)\Rightarrow(ii)$: If $f\in\HL^{\K_\A}(B_X,Y)$, then $\Im_{L}(f)$ is a relatively $\A$-compact subset of $Y$. A look at \cite[p. 1094]{LasTur18} shows that $\overline{\abco}(\Im_{L}(f))$ is relatively $\A$-compact in $Y$ with $m_\A(\overline{\abco}(\Im_{L}(f)))=m_\A(\Im_{L}(f))$. Hence there exist a complex Banach space $Z$, an operator $T \in \A(Z,X)$ and a compact set $M\subseteq Z$ such that $\overline{\abco}(\Im_{L}(f)) \subseteq T(M)$. By the second inclusion in the above chain of relations, we obtain that $T_f(\overline{B}_{\G(B_X)}) \subseteq T(M)$ and thus $T_f(\overline{B}_{\G(B_X)})$ is relatively $\A$-compact in $Y$ and $m_\A(T_f(\overline{B}_{\G(B_X)})) \leq m_\A(\overline{\abco}(\Im_{L}(f)))$. Therefore $T_f \in \K_\A(\G(B_X),Y)$ and
    $$
    \|T_f\|_{\K_\A} = m_\A(T_f(\overline{B}_{\G(B_X)})) \leq m_\A(\Im_{L}(f)) = \|f\|_{\HL^{\K_\A}}.
    $$

$(ii)\Rightarrow(i)$: Assume that $T_f \in \K_\A(\G(B_X),Y)$, that is, $T_f(\overline{B}_{\G(B_X)})$ is a relatively $\A$-compact subset of $Y$. Now, by the first inclusion in the cited chain, it follows that $\Im_{L}(f)$ is relatively $\A$-compact in $Y$ with $m_\A(\Im_{L}(f)) \leq m_\A(T_f(\overline{B}_{\G(B_X)}))$, that is, $f \in \HL^{\K_{\A}}(B_X,Y)$ and $\|f\|_{\HL^{\K_\A}}\leq \|T_f\|_{\K_\A}$.

Applying what we have just proved and \cite[Proposition 2.3 (c)]{Aro24}, the last assertion of the statement is deduced.
\end{proof}

The combination of Theorems \ref{Theorem: linearization} and \ref{Theorem: linearization A-compact} shows that $[\HL^{\K_\A},\|\cdot\|_{\HL^{\K_\A}}]$ is an $s$-normed holomorphic Lipschitz ideal of composition type which inherits the properties of $[\K_\A,\left\|\cdot\right\|_{\K_\A}]$ by Corollary \ref{Theorem: composition ideal}.

\begin{corollary}\label{Corollary: A-compact composition}
Let $[\A,\left\|\cdot\right\|_\A]$ be an $s$-normed operator ideal. Then 
$$
[\HL^{\K_\A},\left\|\cdot\right\|_{\HL^{\K_\A}}]=[\K_\A\circ\HL,\left\|\cdot\right\|_{\K_\A\circ\HL}].
$$
Moreover, $[\HL^{\K_\A},\left\|\cdot\right\|_{\HL^{\K_\A}}]$ is $s$-Banach (resp., regular) whenever $[\K_\A,\left\|\cdot\right\|_{\K_\A}]$ is so. $\hfill \square$
\end{corollary}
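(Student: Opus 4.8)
The plan is to derive both assertions directly from the two linearization results already established, Theorem \ref{Theorem: linearization} and Theorem \ref{Theorem: linearization A-compact}, thereby reducing everything to the corresponding statements for the linear operator ideal $\K_\A$. The guiding observation is that each of these theorems characterizes membership of a holomorphic Lipschitz map $f$ through a condition on its linearization $T_f$, and once we notice that the two conditions are literally the same, the identity of the two holomorphic Lipschitz ideals is immediate.

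First I would apply Theorem \ref{Theorem: linearization} with the $s$-normed operator ideal $[\A,\|\cdot\|_\A]$ replaced by $[\K_\A,\|\cdot\|_{\K_\A}]$ — this is legitimate, since, as recalled in the Introduction, $[\K_\A,\|\cdot\|_{\K_\A}]$ is a Banach operator ideal, in particular a $1$-normed (hence $s$-normed with $s=1$) operator ideal. This yields: $f\in\K_\A\circ\HL(B_X,Y)$ if and only if $T_f\in\K_\A(\G(B_X),Y)$, and in that case $\|f\|_{\K_\A\circ\HL}=\|T_f\|_{\K_\A}$. On the other hand, Theorem \ref{Theorem: linearization A-compact} says that $f\in\HL^{\K_\A}(B_X,Y)$ if and only if $T_f\in\K_\A(\G(B_X),Y)$, with $\|f\|_{\HL^{\K_\A}}=\|T_f\|_{\K_\A}$. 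Comparing the two, the components $\HL^{\K_\A}(B_X,Y)$ and $\K_\A\circ\HL(B_X,Y)$ coincide as subsets of $\HL(B_X,Y)$ and the associated norms take the same value on each $f$. Since $X$ and $Y$ are arbitrary, $[\HL^{\K_\A},\|\cdot\|_{\HL^{\K_\A}}]=[\K_\A\circ\HL,\|\cdot\|_{\K_\A\circ\HL}]$ (and $f\mapsto T_f$ is the common isometric identification with $\K_\A(\G(B_X),Y)$, already noted in Theorem \ref{Theorem: linearization A-compact}).

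For the ``moreover'' part I would invoke Corollary \ref{Theorem: composition ideal} applied to the operator ideal $[\K_\A,\|\cdot\|_{\K_\A}]$: it guarantees that $[\K_\A\circ\HL,\|\cdot\|_{\K_\A\circ\HL}]$ is $s$-Banach (here $1$-Banach) whenever $[\K_\A,\|\cdot\|_{\K_\A}]$ is so — which always holds — and that it is regular whenever $[\K_\A,\|\cdot\|_{\K_\A}]$ is regular. Transporting these properties through the isometric identity established in the previous paragraph gives the corresponding statements for $[\HL^{\K_\A},\|\cdot\|_{\HL^{\K_\A}}]$.

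I do not expect a genuine obstacle here: the analytic content was already spent in proving the linearization theorems, and what remains is bookkeeping. The only point deserving a moment's care is verifying that $\K_\A$ is an admissible input for Theorem \ref{Theorem: linearization} and Corollary \ref{Theorem: composition ideal}, i.e.\ that it is an $s$-normed operator ideal (it is, with $s=1$, by the Lassalle--Turco results cited in the Introduction), and checking that the two ``norms'' being compared are the same function of $T_f$, so that no constant or estimate is lost in the identification.
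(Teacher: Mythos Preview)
Your proposal is correct and matches the paper's own justification: the sentence preceding the corollary explicitly says that combining Theorems \ref{Theorem: linearization} and \ref{Theorem: linearization A-compact} gives the identification, with Corollary \ref{Theorem: composition ideal} supplying the inherited properties. One small caveat: for a general $s$-normed $\A$ the ideal $\K_\A$ need only be $s$-normed (not necessarily $1$-normed), as the paper notes at the start of Section \ref{Section 2}; this does not affect your argument since Theorem \ref{Theorem: linearization} and Corollary \ref{Theorem: composition ideal} are stated for arbitrary $s$-normed ideals, but your parenthetical ``here $1$-Banach'' and ``which always holds'' should be softened accordingly.
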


Let $[\A,\left\|\cdot\right\|_\A]$ be an $s$-normed operator ideal. Recall by \cite[4.7 and 8.5]{Pie80} that the surjective hull of $\A$ is defined for any Banach spaces $X$ and $Y$ by
$$
\A^{sur}(X,Y)=\{T\in \L(X,Y)\colon T \circ Q_X \in \A(\ell_1(\overline{B}_X),Y)\},
$$
where $Q_X\colon \ell_1(\overline{B}_X) \to X$ is the canonical metric surjection defined by 
$$
Q_X((\lambda_x)_{x\in \overline{B}_X})=\sum_{x\in\overline{B}_X}\lambda_x x\qquad \left((\lambda_x)_{x\in \overline{B}_X}\in\ell_1(\overline{B}_X)\right).
$$
Moreover, $[\A^{sur},\|\cdot\|_{\A^{sur}}]$ generates an $s$-normed operator ideal if we set
$$
\|T\|_{\A^{sur}}=\|T\circ Q_X\|_\A \qquad (T\in\A^{sur}(X,Y)).
$$

More descriptions of $\A$-compact holomorphic Lipschitz mappings can be given by composition with both $\K_{\A\circ\overline{\mathcal{F}}}$ and $(\A\circ\overline{\mathcal{F}})^{sur}$.

\begin{corollary}
Let $[\A,\left\|\cdot\right\|_\A]$ be a $s$-Banach operator ideal. Then
\[\begin{split}
[\HL^{\K_\A},\left\|\cdot\right\|_{\HL^{\K_\A}}]&=[\K_{\A\circ\overline{\mathcal{F}}}\circ\HL,\left\|\cdot\right\|_{\K_{\A\circ\overline{\mathcal{F}}}\circ\HL}]\\
&=[(\A\circ\overline{\mathcal{F}})^{sur}\circ\HL,\left\|\cdot\right\|_{(\A\circ\overline{\mathcal{F}})^{sur}\circ\HL}].
\end{split}\]
\end{corollary}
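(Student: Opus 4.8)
The plan is to prove the two equalities separately, reducing each to an identity of operator ideals and then transporting it to the holomorphic Lipschitz level. For the first equality, note that the equivalence (i)$\Leftrightarrow$(iv) of Proposition~\ref{Theorem: characterizations A-compact-2}, together with its norm formula and Definition~\ref{Definition: A-compact holomorphic Lipschitz} applied to the operator ideal $\A\circ\overline{\F}$, shows that $[\HL^{\K_\A},\|\cdot\|_{\HL^{\K_\A}}]=[\HL^{\K_{\A\circ\overline{\F}}},\|\cdot\|_{\HL^{\K_{\A\circ\overline{\F}}}}]$, since both $s$-norms of $f$ equal $m_{\A\circ\overline{\F}}(\Im_{L}(f))$. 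Because $[\A,\|\cdot\|_\A]$ is $s$-Banach and $[\overline{\F},\|\cdot\|]$ is a closed (hence $1$-Banach) operator ideal, $[\A\circ\overline{\F},\|\cdot\|_{\A\circ\overline{\F}}]$ is an $s$-Banach operator ideal (cf.\ \cite[Ch.~8]{Pie80}, the $s$-normed case requiring only routine modifications), so Corollary~\ref{Corollary: A-compact composition} applies to it and yields $[\HL^{\K_{\A\circ\overline{\F}}},\|\cdot\|_{\HL^{\K_{\A\circ\overline{\F}}}}]=[\K_{\A\circ\overline{\F}}\circ\HL,\|\cdot\|_{\K_{\A\circ\overline{\F}}\circ\HL}]$. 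Chaining the two identities gives the first equality.

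For the second equality I would isolate the purely linear statement $[\K_{\A\circ\overline{\F}},\|\cdot\|_{\K_{\A\circ\overline{\F}}}]=[(\A\circ\overline{\F})^{sur},\|\cdot\|_{(\A\circ\overline{\F})^{sur}}]$ as operator ideals, and then conclude immediately: the composition procedure of Definition~\ref{Definition: composition procedure} produces the same $s$-normed holomorphic Lipschitz ideal (Corollary~\ref{Theorem: composition ideal}) out of two coinciding $s$-normed operator ideals, so $[\K_{\A\circ\overline{\F}}\circ\HL,\|\cdot\|_{\K_{\A\circ\overline{\F}}\circ\HL}]=[(\A\circ\overline{\F})^{sur}\circ\HL,\|\cdot\|_{(\A\circ\overline{\F})^{sur}\circ\HL}]$ (one may also see this through Theorem~\ref{Theorem: linearization}, which identifies both sides isometrically with the operators $T_f$ on $\G(B_X)$). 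This operator-ideal identity may well be folklore in the theory of $\A$-compact operators; at any rate, here is a direct argument, in which I write $\mathcal{B}=\A\circ\overline{\F}$ and use two facts: $\K_{\mathcal{B}}=\K_\A$ holds isometrically (the linear counterpart of the equivalence used above; cf.\ \cite{LasTur13}), and $\K(\ell_1(\Gamma),Z)=\overline{\F}(\ell_1(\Gamma),Z)$ for every index set $\Gamma$ and every Banach space $Z$, because $\ell_1(\Gamma)$ has the approximation property.

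For the inclusion $(\A\circ\overline{\F})^{sur}\subseteq\K_{\mathcal{B}}$: if $T\circ Q_W=S\circ R$ with $S\in\A(Z,Y)$ and $R\in\overline{\F}(\ell_1(\overline{B}_W),Z)$, then $R(\overline{B}_{\ell_1(\overline{B}_W)})$ is relatively compact, so $T(\overline{B}_W)=(T\circ Q_W)(\overline{B}_{\ell_1(\overline{B}_W)})=S\bigl(R(\overline{B}_{\ell_1(\overline{B}_W)})\bigr)$ is covered by $S$ applied to a compact set; hence $T\in\K_\A(W,Y)=\K_{\mathcal{B}}(W,Y)$, and a rescaling of the factorization gives $\|T\|_{\K_{\mathcal{B}}}\le\|S\|_\A\|R\|$, whence $\|T\|_{\K_{\mathcal{B}}}\le\|T\|_{(\A\circ\overline{\F})^{sur}}$. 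For the reverse inclusion $\K_{\mathcal{B}}\subseteq(\A\circ\overline{\F})^{sur}$: given $T\in\K_{\mathcal{B}}(W,Y)=\K_\A(W,Y)$, the linear version of Proposition~\ref{Theorem: characterizations A-compact-2}(iii) (see \cite{LasTur13}) provides $S\in\A(\ell_1,Y)$ and a relatively compact $K\subseteq\overline{B}_{\ell_1}$ with $T(\overline{B}_W)\subseteq S(K)$ and $\|S\|_\A$ arbitrarily close to $\|T\|_{\K_\A}$; choosing for each $w\in\overline{B}_W$ a point $a_w\in K$ with $S(a_w)=T(w)$ defines a bounded operator $R\colon\ell_1(\overline{B}_W)\to\ell_1$ by $R(e_w)=a_w$, whose range satisfies $R(\overline{B}_{\ell_1(\overline{B}_W)})\subseteq\overline{\abco}(K)$ and is therefore relatively compact, so $R$ is compact and hence approximable; since $S\circ R$ and $T\circ Q_W$ agree on the unit vectors $e_w$ they coincide, so $T\circ Q_W=S\circ R\in\A\circ\overline{\F}$, i.e.\ $T\in(\A\circ\overline{\F})^{sur}(W,Y)$ with $\|T\|_{(\A\circ\overline{\F})^{sur}}\le\|S\|_\A$, and letting $\|S\|_\A\to\|T\|_{\K_\A}$ finishes the estimate.

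The main obstacle is precisely this reverse inclusion: one has to upgrade a mere covering $T(\overline{B}_W)\subseteq S(K)$ of a non-compact image ball by a compact set into a genuine factorization of $T\circ Q_W$ through an \emph{approximable} operator. What makes it work is that an arbitrary selection of preimages $a_w\in K$ automatically assembles into a bounded operator on the $\ell_1$-space $\ell_1(\overline{B}_W)$ whose range is trapped in the compact set $\overline{\abco}(K)$, so that it is compact and — only because the domain is an $\ell_1$-space, via the approximation property — even approximable. Everything else is bookkeeping: tracking the $s$-norms through the rescalings (which depends only on the normalisation of $m_\A$ fixed here) and the already-noted passage from operator ideals to holomorphic Lipschitz ideals via the composition procedure.
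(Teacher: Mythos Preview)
Your proposal is correct and follows essentially the same strategy as the paper: reduce everything to the operator ideal identities $[\K_\A,\|\cdot\|_{\K_\A}]=[\K_{\A\circ\overline{\F}},\|\cdot\|_{\K_{\A\circ\overline{\F}}}]=[(\A\circ\overline{\F})^{sur},\|\cdot\|_{(\A\circ\overline{\F})^{sur}}]$ and then transport them to the holomorphic Lipschitz level via linearization/composition (Theorems~\ref{Theorem: linearization} and \ref{Theorem: linearization A-compact}, Corollary~\ref{Corollary: A-compact composition}). The only real difference is that the paper simply cites \cite[Proposition~2.1]{LasTur13} for these operator ideal identities, whereas you reprove the second one by hand; your selection argument $w\mapsto a_w$ producing a compact (hence approximable, via the AP of $\ell_1(\overline{B}_W)$) lift $R$ is exactly the mechanism behind that proposition, so you have effectively reconstructed the cited result rather than taken a genuinely different route.
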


\begin{proof}
Let $f\in\HL(B_X,Y)$. Just applying Theorem \ref{Theorem: linearization A-compact}, Theorem \ref{Theorem: linearization} and \cite[Proposition 2.1]{LasTur13}, we have
\[\begin{split}
f \in \HL^{\K_\A}(B_X,Y) &\Leftrightarrow T_f \in \K_\A(\G(B_X),Y)\\
                         &\Leftrightarrow f \in \K_\A \circ \HL(B_X,Y)\\
												 &\Leftrightarrow f \in \K_{\A\circ\overline{\mathcal{F}}}\circ \HL(B_X,Y)\\
                         &\Leftrightarrow f \in (\A \circ \overline{\mathcal{F}})^{sur}\circ \HL(B_X,Y),
\end{split}\]
and
$$
\|f\|_{\HL^{\K_\A}} = \|T_f\|_{\K_\A} = \|f\|_{\K_\A \circ \HL} =\|f\|_{\K_{\A\circ\overline{\mathcal{F}}}\circ \HL}= \|f\|_{(\A \circ \overline{\mathcal{F}})^{sur}\circ \HL}.
$$
\end{proof}

Applying Corollary \ref{Corollary: A-compact composition}, \cite[Corollary 2.4]{LasTur13} and Theorem \ref{Theorem: dual ideal}, we also can describe the holomorphic Lipschitz ideal $\HL^{\K_\A}$ as a dual ideal as follows. A version of this result was established in the polynomial setting in \cite[Proposition 2]{Tur16}. 

\begin{corollary}
Let $[\A,\left\|\cdot\right\|_\A]$ be an $s$-normed operator ideal. Then 
\[\begin{split}
[\HL^{\K_\A},\left\|\cdot\right\|_{\HL^{\K_\A}}]&=[\K_\A\circ\HL,\left\|\cdot\right\|_{\K_\A\circ\HL}]\\
&=[\left(\K_\A^{\mathrm{dual}}\right)^{\mathrm{dual}}\circ\HL,\left\|\cdot\right\|_{\left(\K_\A^{\mathrm{dual}}\right)^{\mathrm{dual}}\circ\HL}]\\
&=[(\K_\A^{\mathrm{dual}})^{\HL\text{-}\mathrm{dual}},\left\|\cdot\right\|_{(\K_\A^{\mathrm{dual}})^{\HL\text{-}\mathrm{dual}}}].
\end{split}\] $\hfill \square$
\end{corollary}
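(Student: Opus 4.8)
The plan is to derive all three identities by concatenating results already available in the excerpt, so the corollary is essentially a bookkeeping statement: no new analytic input is needed, only Corollary~\ref{Corollary: A-compact composition}, the ($s$-normed version of the) regularity of $\K_\A$ from \cite[Corollary 2.4]{LasTur13}, and Theorem~\ref{Theorem: dual ideal} applied to a suitable operator ideal.

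First I would record that $[\HL^{\K_\A},\|\cdot\|_{\HL^{\K_\A}}]=[\K_\A\circ\HL,\|\cdot\|_{\K_\A\circ\HL}]$ is precisely Corollary~\ref{Corollary: A-compact composition}, which in turn comes from matching Theorems~\ref{Theorem: linearization} and~\ref{Theorem: linearization A-compact} via the linearization $f\mapsto T_f$. This settles the first equality with nothing further to prove.

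Next I would bring in \cite[Corollary 2.4]{LasTur13} --- read, as announced at the start of Section~\ref{Section 2}, for $s$-normed operator ideals rather than merely normed ones --- which asserts that $\K_\A$ is regular, i.e. $[\K_\A,\|\cdot\|_{\K_\A}]=[(\K_\A^{\mathrm{dual}})^{\mathrm{dual}},\|\cdot\|_{(\K_\A^{\mathrm{dual}})^{\mathrm{dual}}}]$ isometrically. For this one must observe that $\K_\A^{\mathrm{dual}}$ is again an $s$-normed operator ideal (the dual procedure of Subsection~\ref{Subsection 3} preserves the $s$-normed structure, and $\K_\A$ is $s$-normed whenever $\A$ is), so that the double dual $(\K_\A^{\mathrm{dual}})^{\mathrm{dual}}$ makes sense. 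Since the composition procedure $[\B,\|\cdot\|_\B]\mapsto[\B\circ\HL,\|\cdot\|_{\B\circ\HL}]$ of Definition~\ref{Definition: composition procedure} is determined solely by the underlying class $\B$ together with its $s$-norm, applying it to the isometrically equal operator ideals $\K_\A$ and $(\K_\A^{\mathrm{dual}})^{\mathrm{dual}}$ yields the isometrically equal holomorphic Lipschitz ideals $[\K_\A\circ\HL,\|\cdot\|_{\K_\A\circ\HL}]=[(\K_\A^{\mathrm{dual}})^{\mathrm{dual}}\circ\HL,\|\cdot\|_{(\K_\A^{\mathrm{dual}})^{\mathrm{dual}}\circ\HL}]$, the second equality.

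Finally I would invoke Theorem~\ref{Theorem: dual ideal} with the $s$-normed operator ideal $\K_\A^{\mathrm{dual}}$ playing the role of $\A$; this gives $[(\K_\A^{\mathrm{dual}})^{\HL\text{-}\mathrm{dual}},\|\cdot\|_{(\K_\A^{\mathrm{dual}})^{\HL\text{-}\mathrm{dual}}}]=[(\K_\A^{\mathrm{dual}})^{\mathrm{dual}}\circ\HL,\|\cdot\|_{(\K_\A^{\mathrm{dual}})^{\mathrm{dual}}\circ\HL}]$, the third term of the chain, and concatenating the three identities completes the argument. There is no genuine obstacle here; the only care required --- and the closest thing to a subtle point --- is the passage from normed to $s$-normed ideals when citing \cite[Corollary 2.4]{LasTur13}, together with the routine verification that every ideal fed into Theorem~\ref{Theorem: dual ideal} and into Definition~\ref{Definition: composition procedure} is of the required $s$-normed type.
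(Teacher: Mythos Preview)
Your proposal is correct and follows exactly the route indicated in the paper: the first equality is Corollary~\ref{Corollary: A-compact composition}, the second comes from \cite[Corollary 2.4]{LasTur13} (yielding $[\K_\A,\|\cdot\|_{\K_\A}]=[(\K_\A^{\mathrm{dual}})^{\mathrm{dual}},\|\cdot\|_{(\K_\A^{\mathrm{dual}})^{\mathrm{dual}}}]$) fed through the composition procedure, and the third is Theorem~\ref{Theorem: dual ideal} applied with $\K_\A^{\mathrm{dual}}$ in place of $\A$. One terminological caution: the identity $\K_\A=(\K_\A^{\mathrm{dual}})^{\mathrm{dual}}$ is not the same thing as regularity $\K_\A=\K_\A^{reg}$ in Pietsch's sense, so you should simply cite the double-dual identity from \cite{LasTur13} rather than label it ``regularity''.
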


We now provide some characterizations of $\K$-compact holomorphic Lipschitz maps in terms of the continuity and compactness of their transposes. 

From \cite[p. 2455]{LasTur13}, it is well known that the relatively $\K$-compact and the relatively $\overline{\F}$-compact sets coincide with the relatively compact sets and, consequently, $[\K_\A,\|\cdot\|_{\K_\A}]=[\K,\|\cdot\|]$ if $\A=\K,\overline{\F}$. For this reason, the elements of 
$$
[\HL^{\K_\K},\left\|\cdot\right\|_{\HL^{\K_\K}}]:=[\HL^{\K},L(\cdot)]
$$
will be referred to as compact holomorphic Lipschitz maps. The characterizations given below should be compared with \cite[Proposition 2.5]{LasTur13}. Note that the first equivalence in the next result provides a version of Schauder Theorem for holomorphic Lipschitz maps.

\begin{theorem}\label{teo-4-2}
Let $f\in\HL(B_X,Y)$. The following statements are equivalent:
\begin{enumerate}
  \item $f\colon B_X\to Y$ is compact holomorphic Lipschitz.
	\item $f^t\colon Y^*\to\HL(B_X)$ is compact.
	\item $f^t\colon Y^*\to\HL(B_X)$ is bounded-weak*-to-norm continuous. 
	\item $f^t\colon Y^*\to\HL(B_X)$ is compact and bounded-weak*-to-weak continuous. 
	\item $f^t\colon Y^*\to\HL(B_X)$ is compact and weak*-to-weak continuous.
\end{enumerate}
\end{theorem}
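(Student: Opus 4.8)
The plan is to prove the chain of equivalences by first reducing everything to a statement about the linearization $T_f \in \L(\G(B_X),Y)$ and its adjoint $(T_f)^*$, using the two key identities already established: $f^t = \Lambda_X^{-1} \circ (T_f)^*$ from Proposition~\ref{Proposition: transpose properties}, and the isometric identification $f \mapsto T_f$ of $\HL(B_X,Y)$ with $\L(\G(B_X),Y)$. Since $\Lambda_X$ is an isometric isomorphism, $f^t$ is compact (resp. bounded-weak*-to-norm continuous, etc.) if and only if $(T_f)^*\colon Y^* \to \G(B_X)^*$ is. Thus the whole theorem becomes the classical circle of equivalences for a bounded operator $S := T_f$ between Banach spaces and its adjoint $S^*$, specialized to the present setting. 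I would state this reduction explicitly at the start of the proof.

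For the equivalence $(i)\Leftrightarrow(ii)$, I would invoke Theorem~\ref{Theorem: linearization A-compact} with $\A = \K$ (using the observation recorded just before the theorem that $\K_\K = \K$): $f$ is compact holomorphic Lipschitz iff $T_f \in \K(\G(B_X),Y)$, and by the classical Schauder theorem this holds iff $(T_f)^*$ is compact, i.e. iff $f^t$ is compact. For $(ii)\Leftrightarrow(iii)$, the point is the standard fact that a bounded linear operator $R\colon Y^* \to Z$ (here $Z = \HL(B_X)$) is compact if and only if it is $bw^*$-to-norm continuous; the forward direction uses that $R$ restricted to $\overline{B}_{Y^*}$ is $w^*$-to-norm continuous when $R$ is compact (a net converging weak* in the ball has a subnet whose image converges in norm, and compactness forces the whole net to converge to the same limit), together with the Krein--Smulian-type characterization of $bw^*$ continuity via boundedness on balls; the converse direction uses that $\overline{B}_{Y^*}$ is $w^*$-compact, hence $bw^*$-compact, so its image under a $bw^*$-to-norm continuous map is norm-compact. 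I would cite \cite[Proposition 2.5]{LasTur13} and the relevant results in \cite{Meg-98} on the $bw^*$ topology rather than reproving these.

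For the remaining implications $(iii)\Rightarrow(iv)\Rightarrow(v)\Rightarrow(i)$ (or any convenient routing), I would argue: $(iii)\Rightarrow(iv)$ because norm convergence implies weak convergence, so $bw^*$-to-norm continuity implies $bw^*$-to-weak continuity, and compactness already follows from $(iii)$ via $(iii)\Rightarrow(ii)$; $(iv)\Rightarrow(v)$ is immediate since the $w^*$ topology is finer than $bw^*$ on bounded sets — more precisely, weak*-to-weak continuity of $f^t$ follows from Corollary~\ref{teo-4-1}, which already gives that every $f^t$ is weak*-to-weak* continuous, hence in particular weak*-to-weak continuous when composed appropriately; actually the cleanest route is to note $f^t$ is \emph{always} weak*-to-weak* continuous by Corollary~\ref{teo-4-1}, so $(v)$ reduces to ``$f^t$ compact,'' closing the loop to $(ii)$; and $(v)\Rightarrow(i)$ then follows from $(ii)\Leftrightarrow(i)$. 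I expect the main obstacle to be the careful handling of $(ii)\Leftrightarrow(iii)$ — the equivalence of compactness with $bw^*$-to-norm continuity of an adjoint-type map — since this is the one step that genuinely uses the interplay between the $w^*$-compactness of the dual ball and the metrizability/net arguments for $bw^*$; everything else is a bookkeeping consequence of the linearization dictionary and Schauder's theorem. I would present it by first isolating the lemma ``$R \in \L(Y^*,Z)$ is compact $\iff$ $R$ is $bw^*$-to-norm continuous'' (citing it if available, or giving the two-line net argument), then applying it to $R = (T_f)^*$ transported through $\Lambda_X$.
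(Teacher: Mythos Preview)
Your overall strategy---reduce to $T_f$ and $(T_f)^*$ via $f^t=\Lambda_X^{-1}\circ(T_f)^*$ and then quote Schauder and standard $bw^*$ facts---is exactly what the paper does. The paper simply cites \cite[Theorem 3.4.16]{Meg-98} for $(i)\Leftrightarrow(iii)$ and \cite[Proposition 3.1]{Kim-13} for $(iii)\Leftrightarrow(iv)\Leftrightarrow(v)$ rather than arguing by hand.

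However, two of your hand-made arguments contain genuine errors. First, the ``standard fact'' you isolate, that an arbitrary $R\in\L(Y^*,Z)$ is compact if and only if it is $bw^*$-to-norm continuous, is false as stated: take $Y=c_0$ and $R\colon\ell_1\to\C$, $R((a_n))=\sum_n a_n$, which is rank one but not $w^*$-to-norm continuous on $\overline{B}_{\ell_1}$ (consider $e_n\to 0$ weak*). Your net argument breaks precisely here: compactness gives a norm-convergent subnet of $R(y^*_\alpha)$, but you have no way to identify its limit as $R(y^*)$ without knowing in advance that $R$ is $w^*$-to-$w^*$ continuous. In the present situation $f^t$ \emph{is} $w^*$-to-$w^*$ continuous by Corollary~\ref{teo-4-1}, and that is exactly the missing ingredient; equivalently, the correct lemma (which is \cite[Theorem 3.4.16]{Meg-98}) is that $S\in\L(X,Y)$ is compact iff $S^*$ is $bw^*$-to-norm continuous, and you should apply it to $S=T_f$.

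Second, your route through $(iv)\Rightarrow(v)$ confuses the weak and weak* topologies on $\HL(B_X)$. Corollary~\ref{teo-4-1} gives that $f^t$ is always $w^*$-to-$w^*$ continuous, but the weak topology on $\HL(B_X)\cong\G(B_X)^*$ is \emph{finer} than its weak* topology, so this does not yield $w^*$-to-$w$ continuity, and $(v)$ does not ``reduce to $f^t$ compact''. The honest argument is: once $f^t$ is compact, $T_f$ is compact (Schauder), hence weakly compact, hence $(T_f)^*$ is $w^*$-to-$w$ continuous by Gantmacher--Nakamura, and composing with the isomorphism $\Lambda_X^{-1}$ (which is weak-to-weak continuous) gives that $f^t$ is $w^*$-to-$w$ continuous. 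Both gaps are easily repaired, but as written the proposal would not go through.
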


\begin{proof}
$(i)\Leftrightarrow (ii)$: Applying Theorem \ref{Theorem: linearization A-compact}, the Schauder Theorem, Proposition \ref{Proposition: transpose properties} and the ideal property of $\K$, one has 
\[\begin{split}
f\in\HL^{\K}(B_X,Y)
&\Leftrightarrow T_f\in\K(\G(B_X),Y)\\
&\Leftrightarrow (T_f)^*\in\K(Y^*,\G(B_X)^*)\\
&\Leftrightarrow f^t=\Lambda_X^{-1}\circ(T_f)^*\in\K(Y^*,\HL(B_X)).
\end{split}\]

$(i)\Leftrightarrow (iii)$: Similarly, 
\[\begin{split}
f\in\HL^{\K}(B_X,Y)
&\Leftrightarrow T_f\in\K(\G(B_X),Y)\\
&\Leftrightarrow (T_f)^*\in\L((Y^*,bw^*);\G(B_X)^*)\\
&\Leftrightarrow f^t=\Lambda_X^{-1}\circ(T_f)^*\in\L((Y^*,bw^*);\HL(B_X)),
\end{split}\]
by Theorem \ref{Theorem: linearization A-compact} and \cite[Theorem 3.4.16]{Meg-98}. 

$(iii)\Leftrightarrow (iv)\Leftrightarrow (v)$: It follows from \cite[Proposition 3.1]{Kim-13}.
\end{proof}

In the line of Corollary \ref{teo-4-1}, the transposition now identifies $\HL^{\K}(B_X,Y)$ with the space of all bounded-weak*-to-norm continuous linear operators from $Y^*$ into $\HL(B_X)$.

\begin{proposition}\label{cor-4-1}
The correspondence $f\mapsto f^t$ is an isometric isomorphism from $\HL^{\K}(B_X,Y)$ onto $\L((Y^*,bw^*);\HL(B_X))$.
\end{proposition}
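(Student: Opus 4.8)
The plan is to obtain the proposition by combining Corollary \ref{teo-4-1} with the equivalence $(i)\Leftrightarrow(iii)$ of Theorem \ref{teo-4-2}, noting first that $\L((Y^*,bw^*);\HL(B_X))$ is naturally normed by the operator norm: any $T$ in it is norm bounded, since its restriction to $\overline{B}_{Y^*}$ --- on which $bw^*$ agrees with $w^*$ --- maps the $w^*$-compact ball onto a norm-compact, hence bounded, set. Now by Corollary \ref{teo-4-1} the correspondence $f\mapsto f^t$ is linear, injective and, by Proposition \ref{Proposition: transpose properties}, isometric (with $\|f^t\|=L(f)$) on $\HL(B_X,Y)$; and by the equivalence $(i)\Leftrightarrow(iii)$ of Theorem \ref{teo-4-2}, an $f\in\HL(B_X,Y)$ lies in $\HL^{\K}(B_X,Y)$ exactly when $f^t$ is bounded-weak*-to-norm continuous, i.e. exactly when $f^t\in\L((Y^*,bw^*);\HL(B_X))$. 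Since $\|f\|_{\HL^{\K}}=L(f)$ for such $f$, the restriction of $f\mapsto f^t$ to $\HL^{\K}(B_X,Y)$ is an isometry into $\L((Y^*,bw^*);\HL(B_X))$.

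The content of the statement is surjectivity. Given $T\in\L((Y^*,bw^*);\HL(B_X))$, by Corollary \ref{teo-4-1} it suffices to show that $T$ is $w^*$-to-$w^*$ continuous; then $T=f^t$ for a unique $f\in\HL(B_X,Y)$, and $f^t=T$ being bounded-weak*-to-norm continuous forces $f\in\HL^{\K}(B_X,Y)$ by the implication $(iii)\Rightarrow(i)$ of Theorem \ref{teo-4-2}. To prove that $T$ is $w^*$-to-$w^*$ continuous I would mimic the proof of Corollary \ref{teo-4-1}: set $\widetilde{T}:=\Lambda_X\circ T\colon Y^*\to\G(B_X)^*$, which is again bounded-weak*-to-norm continuous since $\Lambda_X$ is an isometric isomorphism; for each $\gamma\in\G(B_X)$ the scalar functional $y^*\mapsto\langle\widetilde{T}(y^*),\gamma\rangle$ is the composition of $\widetilde{T}$ with a norm-continuous functional, hence $bw^*$-continuous on $Y^*$, and therefore $w^*$-continuous by the Banach--Dieudonn\'e theorem; thus it is given by evaluation at a point of $Y$, so that $\widetilde{T}$ is $w^*$-to-$w^*$ continuous, and hence so is $T=\Lambda_X^{-1}\circ\widetilde{T}$. (Alternatively, once $\widetilde{T}$ is known to be $w^*$-to-$w^*$ continuous, \cite[Theorem 3.1.11]{Meg-98} gives $\widetilde{T}=S^*$ for some $S\in\L(\G(B_X),Y)$, and since $S^*=\widetilde{T}$ is bounded-weak*-to-norm continuous, \cite[Theorem 3.4.16]{Meg-98} shows $S$ is compact; writing $S=T_f$ via \cite[Proposition 2.3 (c)]{Aro24} then yields $f^t=\Lambda_X^{-1}\circ(T_f)^*=T$ with $f\in\HL^{\K}(B_X,Y)$ by Theorem \ref{Theorem: linearization A-compact}.)

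I expect the only step that is not pure bookkeeping to be the upgrade from bounded-weak*-to-norm continuity of $T$ to $w^*$-to-$w^*$ continuity: the topology $bw^*$ is in general strictly finer than $w^*$, so this is not formal, and it rests on the one external input that a $bw^*$-continuous linear functional on a dual Banach space is automatically $w^*$-continuous --- a form of the Banach--Dieudonn\'e / Krein--\v{S}mulian theorem. Everything else --- linearity, injectivity, the isometric estimate, and the identification of the image with the $\A$-compact class --- is already provided by Corollary \ref{teo-4-1}, Theorem \ref{teo-4-2} and Proposition \ref{Proposition: transpose properties}.
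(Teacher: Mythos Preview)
Your proposal is correct and follows essentially the same approach as the paper: both arguments reduce surjectivity to showing that a $bw^*$-to-norm continuous $T$ is automatically $w^*$-to-$w^*$ continuous by testing against each $\gamma\in\G(B_X)$ and invoking the Banach--Dieudonn\'e/Krein--\v{S}mulian theorem (this is precisely \cite[Theorem 2.7.8]{Meg-98} in the paper). Your ``alternative'' route --- writing $\Lambda_X\circ T=S^*$, deducing $S\in\K(\G(B_X),Y)$ from \cite[Theorem 3.4.16]{Meg-98}, and concluding via Theorem \ref{Theorem: linearization A-compact} --- is in fact exactly the paper's proof; your ``main'' route differs only cosmetically, packaging the same steps through Corollary \ref{teo-4-1} and Theorem \ref{teo-4-2} instead.
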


\begin{proof}
Let $f\in\HL^{\K}(B_X,Y)$. Then $f^t\in\L((Y^*,bw^*);\HL(B_X))$ by Theorem \ref{teo-4-2} and $||f^t||=L(f)$ by Proposition \ref{Proposition: transpose properties}. To prove the surjectivity, take $T\in\L((Y^*,bw^*);\HL(B_X))$. Then $\Lambda_X\circ T\in\L((Y^*,bw^*);\G(B_X)^*)$. Then $\kappa_{\G(B_X)}(\gamma)\circ\Lambda_X\circ T\in\L((Y^*,bw^*);\C)$ for all $\gamma\in\G(B_X)$ and, by \cite[Theorem 2.7.8]{Meg-98}, $\kappa_{\G(B_X)}(\gamma)\circ\Lambda_X\circ T\in\L((Y^*,w^*);\C)$ for all $\gamma\in\G(B_X)$, that is, $\Lambda_X\circ T\in\L((Y^*,w^*);(\G(B_X)^*,w^*))$ by \cite[Corollary 2.4.5]{Meg-98}. Hence $\Lambda_X\circ T=S^*$ for some $S\in\L(\G(B_X),Y)$. 
Note that $S^*\in\L((Y^*,bw^*);\G(B_X)^*)$ and this means that $S\in\K(\G(B_X),Y)$.  
Now, $S=T_f$ for some $f\in\HL^{\K}(B_X,Y)$ by Theorem \ref{Theorem: linearization A-compact}. Finally, we have $T=\Lambda_X^{-1}\circ S^*=\Lambda_X^{-1}\circ (T_f)^*=f^t$.
\end{proof}

For $p\in [1,\infty)$, consider the ideals $\K_p$, $\N_p$ and $\QN_p$ of $p$-compact operators, right $p$-nuclear operators and quasi $p$-nuclear operators between Banach spaces, respectively (see \cite{DelPiSer-10,GalLasTur-12,Rya02,SinKar02} for definitions and properties).

From \cite[Remarks 1.3 and 1.7]{LasTur13}, note that the relatively $\N_p$-compact sets and the relatively $\K_p$-compact sets coincide with the relatively $p$-compact sets, and thus $[\K_\A,\|\cdot\|_{\K_\A}]=[\K_p,\|\cdot\|_{\K_p}]$ if $\A=\N_p,\K_p$. 

The application of Theorem \ref{Theorem: linearization A-compact}, \cite[Corollary 2.7]{GalLasTur-12} and Proposition \ref{Proposition: transpose properties} with the ideal property of $\QN_p$ provides the following characterizations of $\N_p$-compact holomorphic Lipschitz maps. 

\begin{proposition}\label{teo-4-22}
Let $f\in\HL(B_X,Y)$. The following assertions are equivalent:
\begin{enumerate}
  \item $f\in\HL^{\K_p}(B_X,Y)$.
	\item $T_f\in\K_p(\G(B_X),Y)$.
	\item $(T_f)^*\in\QN_p(Y^*,\G(B_X)^*)$.
	\item $f^t\in\QN_p(Y^*,\HL(B_X))$.
\end{enumerate}
In such a case, $\|f\|_{\HL^{\K_p}}=\|T_f\|_{\K_p}=\|(T_f)^*\|_{\QN_p}=\|f^t\|_{\QN_p}$ for all $f\in\HL^{\K_p}(B_X,Y)$. $\hfill \square$
\end{proposition}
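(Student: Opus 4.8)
The plan is to derive the statement by chaining together three equivalences, each of which is an immediate consequence of a result already available in the paper or in the literature, and then to read off the chain of norm identities.

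First, I would obtain $(i)\Leftrightarrow(ii)$ as a direct instance of Theorem \ref{Theorem: linearization A-compact}. By \cite[Remarks 1.3 and 1.7]{LasTur13} recalled above, the relatively $\K_p$-compact sets are exactly the relatively $p$-compact sets, so $[\K_{\K_p},\|\cdot\|_{\K_{\K_p}}]=[\K_p,\|\cdot\|_{\K_p}]$; applying Theorem \ref{Theorem: linearization A-compact} with $\A=\K_p$ then gives that $f\in\HL^{\K_p}(B_X,Y)$ if and only if $T_f\in\K_p(\G(B_X),Y)$, together with $\|f\|_{\HL^{\K_p}}=\|T_f\|_{\K_p}$. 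Next, $(ii)\Leftrightarrow(iii)$ is precisely the isometric description of $p$-compact operators through their adjoints, namely $T\in\K_p(E,F)$ if and only if $T^*\in\QN_p(F^*,E^*)$ with $\|T\|_{\K_p}=\|T^*\|_{\QN_p}$, which is \cite[Corollary 2.7]{GalLasTur-12}; I would apply this with $E=\G(B_X)$ and $F=Y$ to get the equivalence and the equality $\|T_f\|_{\K_p}=\|(T_f)^*\|_{\QN_p}$.

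The third equivalence $(iii)\Leftrightarrow(iv)$ rests on Proposition \ref{Proposition: transpose properties}, which identifies $f^t=\Lambda_X^{-1}\circ(T_f)^*$ with $\Lambda_X\colon\HL(B_X)\to\G(B_X)^*$ an isometric isomorphism. Since $[\QN_p,\|\cdot\|_{\QN_p}]$ is a Banach operator ideal, the ideal property gives, on the one hand, that $(T_f)^*\in\QN_p(Y^*,\G(B_X)^*)$ forces $f^t=\Lambda_X^{-1}\circ(T_f)^*\in\QN_p(Y^*,\HL(B_X))$ with $\|f^t\|_{\QN_p}\leq\|\Lambda_X^{-1}\|\,\|(T_f)^*\|_{\QN_p}=\|(T_f)^*\|_{\QN_p}$; and on the other hand, writing $(T_f)^*=\Lambda_X\circ f^t$, that $f^t\in\QN_p(Y^*,\HL(B_X))$ forces $(T_f)^*\in\QN_p(Y^*,\G(B_X)^*)$ with $\|(T_f)^*\|_{\QN_p}\leq\|\Lambda_X\|\,\|f^t\|_{\QN_p}=\|f^t\|_{\QN_p}$. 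Hence these statements are equivalent and $\|f^t\|_{\QN_p}=\|(T_f)^*\|_{\QN_p}$.

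Finally, concatenating the three equalities yields $\|f\|_{\HL^{\K_p}}=\|T_f\|_{\K_p}=\|(T_f)^*\|_{\QN_p}=\|f^t\|_{\QN_p}$ for every $f\in\HL^{\K_p}(B_X,Y)$, completing the argument. The proof is essentially bookkeeping, assembling pieces that are already in place; I expect the only point demanding a little care to be invoking \cite[Corollary 2.7]{GalLasTur-12} in its quantitative (isometric) form rather than merely as a set equality, so that the chain of norm identities actually closes — there is no genuine obstacle beyond that.
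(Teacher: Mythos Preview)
Your proposal is correct and matches the paper's own approach exactly: the paper states just before the proposition that it follows from Theorem \ref{Theorem: linearization A-compact}, \cite[Corollary 2.7]{GalLasTur-12}, and Proposition \ref{Proposition: transpose properties} together with the ideal property of $\QN_p$, and then omits the proof. Your write-up is simply a careful unpacking of precisely these three ingredients, including the isometric form of each step needed to close the chain of norm equalities.
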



\section{$\A$-Bounded holomorphic Lipschitz mappings}\label{Section 3}

Given a Banach operator ideal $[\mathcal{A},\left\|\cdot\right\|_\A]$, the property of $\A$-compactness due to Carl and Stephani \cite{CarSte84} corresponds to a particular case of the property of $\A$-boundedness introduced by Stephani \cite{Ste-80} (see also \cite{GonGut-00}).

For a Banach space $X$, a set $B\subseteq X$ is said to be $\A$-bounded if there exist a Banach space $Z$ and an operator $T\in\A(Z,Y)$ such that $B\subseteq T(\overline{B}_Z)$. We denote by $\mathcal{C}_\A(X)$ the collection of all $\A$-bounded subsets $B$ of $X$. In such a case, a measure of the size of $\A$-boundedness of $B$ can be given by $n_\A(K) = \inf\{\|T\|_\A\}$, where the infimum is taken over all such $Z$ and $T$ as above. 

Furthermore, an operator $T\in\L(X,Y)$ is said to be $\A$-bounded whenever $T(\overline{B}_X)$ is an $\A$-bounded subset of $Y$. The set of all such operators is denoted as $\mathcal{C}_\A(X,Y)$, and $[\mathcal{C}_\A,\|\cdot\|_{\mathcal{C}_\A}]$ is a Banach operator ideal with the norm
$$
\|T\|_{\mathcal{C}_\A} = n_\A(T(\overline{B}_X)) \qquad (T \in \mathcal{C}_\A(X,Y)).
$$

The version of $\A$-bounded linear operators for holomorphic Lipschitz maps reads as follows.

\begin{definition}\label{Definition: A-bounded holomorphic Lipschitz}
Let $[\A,\left\|\cdot\right\|_\A]$ be an $s$-normed operator ideal. Given complex Banach spaces $X$ and $Y$, a map $f\in\HL(B_X,Y)$ is said to be $\A$-bounded holomorphic Lipschitz if $\Im_L(f)$ is an $\A$-bounded subset of $Y$. If $\HL^{\mathcal{C}_\A}(B_X,Y)$ stands for the set of all $\A$-bounded holomorphic Lipschitz maps from $B_X$ into $Y$, we set
$$
\|f\|_{\HL^{\mathcal{C}_\A}} = n_\A(\Im_{L}(f)) \qquad (f \in \HL^{\mathcal{C}_\A}(B_X,Y)).
$$
\end{definition}

A proof similar to that Theorem \ref{Theorem: linearization A-compact} provides us the following result. See \cite{GonGut-00} for a complete list of closed operator ideals which are surjective.

\begin{theorem}\label{Theorem: linearization A-bounded}
Let $[\A,\left\|\cdot\right\|_\A]$ be a closed (in the topology of the operator norm) $s$-normed surjective ideal and $f\in\HL(B_X,Y)$. The following are equivalent:
\begin{enumerate}
	\item $f$ belongs to $\HL^{\mathcal{C}_\A}(B_X,Y)$.
	\item $T_f$ belongs to $\mathcal{C}_\A(\G(B_X),Y)$.
\end{enumerate}
In such a case, $\|f\|_{\HL^{\mathcal{C}_\A}} = \|T_f\|_{\mathcal{C}_\A}$. As a consequence, $f \mapsto T_f$ is an isometric isomorphism from $(\HL^{\mathcal{C}_\A}(B_X,Y),\|\cdot\|_{\HL^{\mathcal{C}_\A}})$ onto $(\mathcal{C}_\A(\G(B_X),Y),\left\|\cdot\right\|_{\mathcal{C}_\A})$.$\hfill \square$
\end{theorem}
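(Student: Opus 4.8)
The plan is to run the proof of Theorem~\ref{Theorem: linearization A-compact} essentially verbatim, replacing relative $\A$-compactness by $\A$-boundedness throughout and feeding in the corresponding stability properties of $\A$-bounded sets from \cite{Ste-80,GonGut-00}. The first step is to record the same chain of set inclusions that drove that proof: by \cite[Propositions 2.3 and 2.6]{Aro24}, for every $f\in\HL(B_X,Y)$ one has
$$
\Im_{L}(f)=T_f(\Im_{L}(\delta_X))\subseteq T_f(\overline{\abco}(\Im_{L}(\delta_X)))=T_f(\overline{B}_{\G(B_X)})\subseteq\overline{\abco}(T_f(\Im_{L}(\delta_X)))=\overline{\abco}(\Im_{L}(f)),
$$
where the second inclusion is just continuity of $T_f$. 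This chain involves no ideal structure, so it transfers unchanged; the whole point of the theorem is to turn it into the two implications with matching norms.

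The implication $(ii)\Rightarrow(i)$ is immediate: if $T_f\in\mathcal{C}_\A(\G(B_X),Y)$, the first inclusion exhibits $\Im_{L}(f)$ as a subset of the $\A$-bounded set $T_f(\overline{B}_{\G(B_X)})$, hence $\Im_{L}(f)$ is $\A$-bounded with $n_\A(\Im_{L}(f))\le n_\A(T_f(\overline{B}_{\G(B_X)}))$, i.e. $\|f\|_{\HL^{\mathcal{C}_\A}}\le\|T_f\|_{\mathcal{C}_\A}$. For $(i)\Rightarrow(ii)$ I would invoke the $\A$-bounded analogue of the fact used in the $\A$-compact case (the one cited from \cite[p.~1094]{LasTur18}): for a closed surjective operator ideal $\A$, the closed absolutely convex hull of an $\A$-bounded set $B$ is again $\A$-bounded, with $n_\A(\overline{\abco}(B))=n_\A(B)$. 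Applying this to $B=\Im_{L}(f)$ and using the third inclusion, $T_f(\overline{B}_{\G(B_X)})\subseteq\overline{\abco}(\Im_{L}(f))$ sits inside an $\A$-bounded set, so $T_f\in\mathcal{C}_\A(\G(B_X),Y)$ and $\|T_f\|_{\mathcal{C}_\A}=n_\A(T_f(\overline{B}_{\G(B_X)}))\le n_\A(\overline{\abco}(\Im_{L}(f)))=n_\A(\Im_{L}(f))=\|f\|_{\HL^{\mathcal{C}_\A}}$. Combining the two inequalities yields $\|f\|_{\HL^{\mathcal{C}_\A}}=\|T_f\|_{\mathcal{C}_\A}$. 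For the consequence, recall from \cite[Proposition 2.3 (c)]{Aro24} that $f\mapsto T_f$ is already an isometric isomorphism from $\HL(B_X,Y)$ onto $\L(\G(B_X),Y)$; the equivalence just proved shows it carries $\HL^{\mathcal{C}_\A}(B_X,Y)$ onto $\mathcal{C}_\A(\G(B_X),Y)$ (surjectivity can also be seen directly: for $S\in\mathcal{C}_\A(\G(B_X),Y)$ the map $f:=S\circ\delta_X$ lies in $\HL(B_X,Y)$ with $T_f=S$, so $(ii)\Rightarrow(i)$ gives $f\in\HL^{\mathcal{C}_\A}(B_X,Y)$), and the norm identity makes the restriction isometric.

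The step I expect to be the real obstacle is the stability lemma in the $(i)\Rightarrow(ii)$ direction, and it is precisely there that both hypotheses on $\A$ are consumed. In the $\A$-compact case the passage to $\overline{\abco}$ was free because a continuous image of a compact set is compact, hence automatically closed; here the corresponding image $T(\overline{B}_Z)$ of a closed ball need not be closed, so one needs an additional input to absorb the norm-closure inside the $\A$-bounded world. Surjectivity keeps $\mathcal{C}_\A$ well behaved — in fact $\mathcal{C}_\A$ coincides with the surjective hull $\A^{sur}$ (via the metric surjection $Q_X$), so for surjective $\A$ one has $\mathcal{C}_\A=\A$ — and closedness (equivalently, that the $\A$-norm is the operator norm on $\A$) is what guarantees that the closed absolutely convex hull of an $\A$-bounded set is still $\A$-bounded with the same $n_\A$. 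Once this lemma is cited correctly from \cite{Ste-80,GonGut-00}, the remainder of the argument is a line-by-line transcription of the proof of Theorem~\ref{Theorem: linearization A-compact}.
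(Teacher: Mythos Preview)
Your proposal is correct and follows essentially the same route as the paper's proof: both use the chain $\Im_L(f)\subseteq T_f(\overline{B}_{\G(B_X)})\subseteq\overline{\abco}(\Im_L(f))$ from Theorem~\ref{Theorem: linearization A-compact}, handle $(ii)\Rightarrow(i)$ via the first inclusion, and for $(i)\Rightarrow(ii)$ invoke the stability of $\A$-boundedness under closed absolutely convex hulls (the paper cites this as \cite[Proposition~3]{GonGut-00}). Your additional discussion of where the closedness and surjectivity hypotheses are consumed, and your explicit verification of surjectivity for the final isometric isomorphism, are welcome elaborations but do not change the underlying argument.
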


\begin{proof} 
From the proof of Theorem \ref{Theorem: linearization A-compact}, one has that  
$$
\Im_{L}(f)\subseteq T_f(\overline{B}_{\G(B_X)})\subseteq\overline{\abco}(\Im_{L}(f)).
$$

$(i)\Rightarrow(ii)$: If $f\in\HL^{\mathcal{C}_\A}(B_X,Y)$, then $\Im_{L}(f)$ is an $\A$-bounded subset of $Y$. By \cite[Proposition 3]{GonGut-00}, $\overline{\abco}(\Im_{L}(f))$ is $\A$-bounded in $Y$ with $n_\A(\overline{\abco}(\Im_{L}(f)))=n_\A(\Im_{L}(f))$. Hence we can take a complex Banach space $Z$ and an operator $T \in \A(Z,X)$ such that $\overline{\abco}(\Im_{L}(f))\subseteq T(B_Z)$. Now, the second inclusion above yields that $T_f(\overline{B}_{\G(B_X)}) \subseteq T(B_Z)$, and thus $T_f(\overline{B}_{\G(B_X)})$ is $\A$-bounded in $Y$ with $n_\A(T_f(\overline{B}_{\G(B_X)})) \leq n_\A(\overline{\abco}(\Im_{L}(f)))$. We conclude that $T_f \in\mathcal{C}_\A(\G(B_X),Y)$ and
$$
\|T_f\|_{\mathcal{C}_\A} = n_\A(T_f(\overline{B}_{\G(B_X)})) \leq n_\A(\Im_{L}(f)) = \|f\|_{\HL^{\mathcal{C}_\A}}.
$$

$(ii)\Rightarrow(i)$: If $T_f \in \mathcal{C}_\A(\G(B_X),Y)$, then $T_f(\overline{B}_{\G(B_X)})$ is an $\A$-bounded subset of $Y$. Now, the first inclusion above gives that $\Im_{L}(f)$ is $\A$-bounded in $Y$ with $n_\A(\Im_{L}(f)) \leq n_\A(T_f(\overline{B}_{\G(B_X)}))$, that is, $f \in \HL^{\mathcal{C}_{\A}}(B_X,Y)$ and $\|f\|_{\HL^{\mathcal{C}_\A}}\leq \|T_f\|_{\mathcal{C}_\A}$.
\end{proof}

Since $\W$-bounded sets coincide with weakly compact sets, the elements of $\HL^{\mathcal{C}_\W}(B_X,Y):=\HL^{\W}(B_X,Y)$ may be called weakly compact holomorphic Lipschitz.  

The following result provides versions of Gantmacher, Gantmacher--Nakamura and Davis--Figiel--Johnson--Pe\l czy\'nski Theorems for weakly compact holomorphic Lipschitz mappings. 

\begin{theorem}\label{teo-4-3}
Let $f\in\HL(B_X,Y)$. The following are equivalent:
\begin{enumerate}
  \item $f\colon B_X\to Y$ is weakly compact holomorphic Lipschitz.
	\item $T_f\colon\G(B_X)\to Y$ is weakly compact.
	\item $f^t\colon Y^*\to\HL(B_X)$ is weakly compact.
	\item $f^t\colon Y^*\to\HL(B_X)$ is weak*-to-weak continuous.
	\item There exist a reflexive Banach space $Z$, an operator $T\in\L(Z,Y)$ and a mapping $g\in\HL(B_X,Z)$ such that $f=T\circ g$.
\end{enumerate}
\end{theorem}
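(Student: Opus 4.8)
The plan is to pivot everything on the linearization $T_f$, exploiting that the ideal $\W$ of weakly compact operators is a closed, surjective Banach operator ideal whose $\W$-bounded sets are exactly the relatively weakly compact ones.

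For $(i)\Leftrightarrow(ii)$ I would simply apply Theorem~\ref{Theorem: linearization A-bounded} to $\A=\W$: it is closed in the operator-norm topology and surjective, and by the remark preceding the statement $\HL^{\W}=\HL^{\mathcal{C}_\W}$ and $\W(\G(B_X),Y)=\mathcal{C}_\W(\G(B_X),Y)$, so the theorem says precisely that $f$ is weakly compact holomorphic Lipschitz if and only if $T_f$ is weakly compact. Alternatively one can argue directly from the inclusions $\Im_{L}(f)\subseteq T_f(\overline{B}_{\G(B_X)})\subseteq\overline{\abco}(\Im_{L}(f))$ recorded in the proof of Theorem~\ref{Theorem: linearization A-compact}, invoking the Krein--\v{S}mulian theorem (the closed absolutely convex hull of a relatively weakly compact set is relatively weakly compact) to see that the three sets are relatively weakly compact simultaneously.

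For $(ii)\Leftrightarrow(iii)$ and $(ii)\Leftrightarrow(iv)$ I would use Proposition~\ref{Proposition: transpose properties}, which gives $f^t=\Lambda_X^{-1}\circ(T_f)^*$ for the isometric isomorphism $\Lambda_X\colon\HL(B_X)\to\G(B_X)^*$. Gantmacher's theorem yields $T_f\in\W(\G(B_X),Y)\iff(T_f)^*\in\W(Y^*,\G(B_X)^*)$, and composing with the isomorphisms $\Lambda_X^{\pm1}$ (ideal property of $\W$) this amounts to $f^t\in\W(Y^*,\HL(B_X))$, which is $(ii)\Leftrightarrow(iii)$. For $(ii)\Leftrightarrow(iv)$ I would use the classical fact that $R\in\L(E,Y)$ is weakly compact if and only if $R^*$ is continuous from $(Y^*,w^*)$ into $(E^*,w)$: one direction because then $R^*(\overline{B}_{Y^*})$ is weakly compact, so $R^*$, hence $R$, is weakly compact by Gantmacher; the other because weak compactness of $R$ forces $R^{**}(E^{**})\subseteq\kappa_Y(Y)$, whence for any $\Phi\in E^{**}$ and any $w^*$-convergent net $y^*_\alpha\to y^*$ one has $\langle\Phi,R^*y^*_\alpha\rangle=\langle\kappa_Y^{-1}(R^{**}\Phi),y^*_\alpha\rangle\to\langle\kappa_Y^{-1}(R^{**}\Phi),y^*\rangle=\langle\Phi,R^*y^*\rangle$. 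Applying this with $R=T_f$ and transporting through $\Lambda_X$ (which is simultaneously a $w^*$-to-$w^*$ and a $w$-to-$w$ homeomorphism between $\HL(B_X)$ and $\G(B_X)^*$) identifies weak compactness of $T_f$ with $w^*$-to-$w$ continuity of $f^t$.

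For $(ii)\Leftrightarrow(v)$ I would use the Davis--Figiel--Johnson--Pe\l czy\'nski factorization theorem: if $T_f$ is weakly compact it factors as $T_f=T\circ S$ through a reflexive Banach space $Z$, with $S\in\L(\G(B_X),Z)$ and $T\in\L(Z,Y)$, so that $g:=S\circ\delta_X\in\HL(B_X,Z)$ and $f=T_f\circ\delta_X=T\circ g$. Conversely, if $f=T\circ g$ with $Z$ reflexive, then $T_f=T\circ T_g$ (argued as in the proof of Theorem~\ref{Theorem: linearization}), and since $T_g(\overline{B}_{\G(B_X)})$ is a bounded subset of the reflexive space $Z$ it is relatively weakly compact, so $T_g$, and hence $T_f=T\circ T_g$ by the ideal property, is weakly compact. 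The only genuinely delicate point is the topological bookkeeping in $(ii)\Leftrightarrow(iv)$: one must keep track that the weak* topology on $\HL(B_X)$ is the one coming from its predual $\G(B_X)$ and that $\Lambda_X$ respects it. Everything else is a routine transcription of Gantmacher, Krein--\v{S}mulian and Davis--Figiel--Johnson--Pe\l czy\'nski through the linearization isometry $f\mapsto T_f$.
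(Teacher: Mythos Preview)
Your proposal is correct and follows essentially the same route as the paper: $(i)\Leftrightarrow(ii)$ via Theorem~\ref{Theorem: linearization A-bounded}, $(ii)\Leftrightarrow(iii)$ via Gantmacher and Proposition~\ref{Proposition: transpose properties}, $(ii)/(iii)\Leftrightarrow(iv)$ via the Gantmacher--Nakamura characterization, and the equivalence with $(v)$ via Davis--Figiel--Johnson--Pe\l czy\'nski. The only cosmetic differences are that the paper closes the loop through $(v)\Rightarrow(i)$ directly (using $\Im_L(f)=T(\Im_L(g))$ and weak-to-weak continuity of $T$) whereas you close it through $(v)\Rightarrow(ii)$ via $T_f=T\circ T_g$, and the paper phrases the Gantmacher--Nakamura step as $(iii)\Leftrightarrow(iv)$ rather than your $(ii)\Leftrightarrow(iv)$; both packagings are equivalent once one transports through $\Lambda_X$.
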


\begin{proof}
$(i)\Leftrightarrow (ii)$ follows from Theorem \ref{Theorem: linearization A-bounded}; $(ii)\Leftrightarrow (iii)$ by Gantmacher Theorem, Proposition \ref{Proposition: transpose properties} and the ideal property of $W$; and $(iii)\Leftrightarrow (iv)$ by Gantmacher--Nakamura Theorem. 

If (ii) holds, then the Davis--Figiel--Johnson--Pe\l czy\'nski Theorem provides a reflexive Banach space $Z$ and operators $R\in\L(Z,Y)$ and $S\in\L(\G(B_X),Z)$ such that $T_f=R\circ S$. By \cite[Proposition 2.3 (c)]{Aro24}, we can find a map $g\in\HL(B_X,Z)$ such that $S=T_g$. Hence $T_f=R\circ T_g=T_{R\circ g}$, this implies that $f=R\circ g$, and this proves (v). Finally, (v) implies (i) because $\Im_L(f)=R(\Im_L(g))$, where $R$ is weak-to-weak continuous by \cite[Theorem 2.5.11]{Meg-98} and $\Im_L(g)$ is relatively weakly compact in $Y$ since it is a bounded subset of the reflexive Banach space $Y$. 
\end{proof}

Motivated by the closed surjective ideals of Rosenthal, Banach--Saks or Asplund operators (see \cite{GonGut-00} and the references therein for the definitions and main properties), we introduce the associate holomorphic Lipschitz maps. 

\begin{definition}
Given complex Banach spaces $X$ and $Y$, a map $f\in\HL(B_X,Y)$ is said to be Rosenthal (resp., Banach--Saks, Asplund) if $\Im_L(f)$ is a Rosenthal (resp., Banach--Saks, Asplund) subset of $Y$.  
\end{definition}

With a proof similar to that of Theorem \ref{teo-4-3}, we may obtain the following.

\begin{theorem}\label{teo-4-3-1}
Let $f\in\HL(B_X,Y)$. The following are equivalent:
\begin{enumerate}
  \item $f\colon B_X\to Y$ is a Rosenthal (resp., Banach--Saks, Asplund) holomorphic Lipschitz map.
	\item $T_f\colon\G(B_X)\to Y$ is a Rosenthal (resp., Banach--Saks, Asplund) operator.
	\item There exist a Banach space $Z$ which contains a copy of $\ell_1$ (resp., has the Banach--Saks property, is Asplund), an operator $T\in\L(Z,Y)$ and a mapping $g\in\HL(B_X,Z)$ such that $f=T\circ g$.
\end{enumerate} $\hfill \square$
\end{theorem}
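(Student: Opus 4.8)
The plan is to run, almost verbatim, the argument used for Theorem~\ref{teo-4-3}, with the ideal $\W$ of weakly compact operators replaced by the operator ideal $\A$ of Rosenthal (resp.\ Banach--Saks, resp.\ Asplund) operators, and with the Davis--Figiel--Johnson--Pe\l czy\'nski factorization replaced by the factorization theorem attached to that ideal. Each of these three ideals is closed in the operator norm and surjective (see \cite{GonGut-00} and the references therein), hence satisfies $\mathcal{C}_\A=\A$; and, by the definition preceding the statement, the $\A$-bounded subsets of a Banach space $Y$ are precisely its Rosenthal (resp.\ Banach--Saks, resp.\ Asplund) subsets. So $f$ is a Rosenthal (resp.\ $\ldots$) holomorphic Lipschitz map if and only if $\Im_L(f)$ is an $\A$-bounded subset of $Y$, and Theorem~\ref{Theorem: linearization A-bounded} applied to this $\A$ turns the latter into $T_f\in\mathcal{C}_\A(\G(B_X),Y)=\A(\G(B_X),Y)$, i.e.\ $T_f$ is a Rosenthal (resp.\ $\ldots$) operator, with equality of the corresponding norms. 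This settles $(1)\Leftrightarrow(2)$.

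For $(2)\Rightarrow(3)$ I would invoke the factorization theorem for the ideal in play: every Rosenthal operator factors through a Banach space containing no copy of $\ell_1$ (equivalently, one in which every bounded sequence has a weakly Cauchy subsequence, by Rosenthal's $\ell_1$-theorem); every Banach--Saks operator factors through a Banach space with the Banach--Saks property; and every Asplund operator factors through an Asplund space. Writing $T_f=R\circ S$ with $S\in\L(\G(B_X),Z)$, $R\in\L(Z,Y)$ and $Z$ in the required class, I would then use \cite[Proposition 2.3 (c)]{Aro24} to pick $g\in\HL(B_X,Z)$ with $T_g=S$; since $T_{R\circ g}=R\circ T_g=R\circ S=T_f$ and $f\mapsto T_f$ is injective, we get $f=R\circ g$, which is exactly $(3)$.

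For $(3)\Rightarrow(1)$ the point is that $\Im_L(f)=T(\Im_L(g))$, where $\Im_L(g)$ is a bounded subset of $Z$ because $g$ is Lipschitz. A bounded subset of a Banach space containing no copy of $\ell_1$ (resp.\ with the Banach--Saks property, resp.\ Asplund) is a Rosenthal (resp.\ Banach--Saks, resp.\ Asplund) set, and the image of such a set under a bounded operator is again a set of the same type, since continuous linear maps are weak-to-weak continuous, commute with Ces\`aro means, and carry (through the adjoint) $\overline{B}_{Y^*}$ into a ball of $Z^*$. Hence $\Im_L(f)$ is a Rosenthal (resp.\ $\ldots$) subset of $Y$, that is, $f$ is a Rosenthal (resp.\ $\ldots$) holomorphic Lipschitz map.

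The routine part is the implications $(1)\Leftrightarrow(2)$ and $(3)\Rightarrow(1)$; I expect the main obstacle to be the bookkeeping in $(2)\Rightarrow(3)$, namely pinning down and correctly quoting the classical factorization theorems for Rosenthal, Banach--Saks and Asplund operators (all external to the present paper) together with the identity $\mathcal{C}_\A=\A$ that makes Theorem~\ref{Theorem: linearization A-bounded} yield the operator-ideal formulation in $(2)$; and, in the Rosenthal case, keeping in mind that the relevant class of target spaces is the one singled out by Rosenthal's $\ell_1$-theorem.
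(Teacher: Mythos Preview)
Your proposal is correct and follows exactly the approach the paper indicates: the paper omits the proof entirely, merely stating that it is obtained ``with a proof similar to that of Theorem~\ref{teo-4-3}'', and your write-up carries out precisely that analogy, invoking Theorem~\ref{Theorem: linearization A-bounded} for $(1)\Leftrightarrow(2)$, the classical factorization theorems from \cite{GonGut-00} for $(2)\Rightarrow(3)$, and the identity $\Im_L(f)=T(\Im_L(g))$ for $(3)\Rightarrow(1)$. Note incidentally that item~(iii) as printed (``contains a copy of $\ell_1$'') is a typo for ``contains no copy of $\ell_1$'', which you correctly use.
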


We next identify $\HL^\W(B_X,Y)$ with the space of all weak*-to-weak continuous linear operators from $Y^*$ into $\HL(B_X)$.

\begin{proposition}\label{cor-4-1b} 
The correspondence $f\mapsto f^t$ is an isometric isomorphism from $\HL^\W(B_X,Y)$ onto $\L((Y^*,w^*);(\HL(B_X),w))$.
\end{proposition}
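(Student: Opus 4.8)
The plan is to mirror the structure of the proofs of Corollary \ref{teo-4-1} and Proposition \ref{cor-4-1}, replacing the relevant continuity/compactness property by weak compactness and its Gantmacher-type characterization. First I would recall from Proposition \ref{Proposition: transpose properties} that for any $f \in \HL(B_X,Y)$ we have $f^t = \Lambda_X^{-1} \circ (T_f)^*$ with $\|f^t\| = L(f)$, so the map $f \mapsto f^t$ is always a linear isometry onto its image; it remains only to identify that image with $\L((Y^*,w^*);(\HL(B_X),w))$ when $f$ ranges over $\HL^\W(B_X,Y)$.

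For the forward inclusion, if $f \in \HL^\W(B_X,Y)$ then by Theorem \ref{teo-4-3} (the equivalence $(i)\Leftrightarrow(iv)$) the transpose $f^t \colon Y^* \to \HL(B_X)$ is weak*-to-weak continuous, i.e. $f^t \in \L((Y^*,w^*);(\HL(B_X),w))$, and $\|f^t\| = L(f)$ by Proposition \ref{Proposition: transpose properties}. For surjectivity, let $T \in \L((Y^*,w^*);(\HL(B_X),w))$. Then $\Lambda_X \circ T \in \L((Y^*,w^*);(\G(B_X)^*,w))$; in particular $\Lambda_X \circ T$ is weak*-to-weak continuous, hence weak*-to-weak* continuous, so $\Lambda_X \circ T = S^*$ for some $S \in \L(\G(B_X),Y)$ by \cite[Theorem 3.1.11]{Meg-98}. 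The extra weak-valued hypothesis says precisely that $S^* \in \W(Y^*,\G(B_X)^*)$, so by Gantmacher's Theorem $S \in \W(\G(B_X),Y)$. By \cite[Proposition 2.3 (c)]{Aro24} there is $f \in \HL(B_X,Y)$ with $T_f = S$, and then the equivalence $(ii)\Leftrightarrow(i)$ of Theorem \ref{teo-4-3} gives $f \in \HL^\W(B_X,Y)$. Finally $T = \Lambda_X^{-1} \circ S^* = \Lambda_X^{-1} \circ (T_f)^* = f^t$, so the correspondence is onto.

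The only point requiring a little care is the deduction that a weak*-to-weak continuous map into $\G(B_X)^*$ is automatically weak*-to-weak* continuous (so that the adjoint representation $S^*$ exists): this is immediate because the weak* topology on $\G(B_X)^*$ is coarser than the weak topology, so $w^*\text{-to-}w$ continuity implies $w^*\text{-to-}w^*$ continuity. Thus the main — and essentially only — substantive ingredient is Gantmacher's Theorem, already invoked in Theorem \ref{teo-4-3}, and there is no real obstacle; the proof is a short assembly of Proposition \ref{Proposition: transpose properties}, Theorem \ref{teo-4-3}, and the characterization of adjoints via weak*-to-weak* continuity. I would write it in three or four sentences along the lines of the proof of Proposition \ref{cor-4-1}.

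\begin{proof}
Let $f\in\HL^\W(B_X,Y)$. Then $f^t\in\L((Y^*,w^*);(\HL(B_X),w))$ by Theorem \ref{teo-4-3} and $\|f^t\|=L(f)$ by Proposition \ref{Proposition: transpose properties}. To prove the surjectivity, take $T\in\L((Y^*,w^*);(\HL(B_X),w))$. Then $\Lambda_X\circ T\in\L((Y^*,w^*);(\G(B_X)^*,w))$, and since the topology $w^*$ on $\G(B_X)^*$ is coarser than $w$, we have $\Lambda_X\circ T\in\L((Y^*,w^*);(\G(B_X)^*,w^*))$. Hence $\Lambda_X\circ T=S^*$ for some $S\in\L(\G(B_X),Y)$ by \cite[Theorem 3.1.11]{Meg-98}, and the fact that $S^*$ takes values in $(\HL(B_X),w)$ means precisely that $S^*\in\W(Y^*,\G(B_X)^*)$, so $S\in\W(\G(B_X),Y)$ by Gantmacher Theorem. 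By \cite[Proposition 2.3 (c)]{Aro24}, $S=T_f$ for some $f\in\HL(B_X,Y)$, and then $f\in\HL^\W(B_X,Y)$ by Theorem \ref{teo-4-3}. Finally, $T=\Lambda_X^{-1}\circ S^*=\Lambda_X^{-1}\circ(T_f)^*=f^t$ by Proposition \ref{Proposition: transpose properties}.
\end{proof}
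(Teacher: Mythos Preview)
Your proof is correct and follows essentially the same route as the paper's: reduce to surjectivity, pass through $\Lambda_X$ to get a weak*-to-weak* continuous operator on $Y^*$, write it as $S^*$ for some $S\in\L(\G(B_X),Y)$, and then use the weak*-to-weak continuity of $S^*$ together with a Gantmacher-type argument to conclude $S\in\W(\G(B_X),Y)$, whence $f\in\HL^\W(B_X,Y)$ via Theorem \ref{teo-4-3}. The only cosmetic difference is that the paper invokes Gantmacher--Nakamura directly (weak*-to-weak continuity of $S^*$ $\Leftrightarrow$ $S\in\W$), whereas you first observe $S^*\in\W$ and then apply Gantmacher; also, your phrase ``$S^*$ takes values in $(\HL(B_X),w)$'' should read ``$S^*$ is weak*-to-weak continuous into $\G(B_X)^*$'', since $S^*=\Lambda_X\circ T$ lands in $\G(B_X)^*$, not $\HL(B_X)$.
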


\begin{proof}
We only need to show that such map is surjective. Let $T\in\L((Y^*,w^*);(\HL(B_X),w))$. Then $\Lambda_X\circ T\in\L((Y^*,w^*);(\G(B_X)^*,w))$ 
and this last set is contained in $\L((Y^*,w^*);(\G(B_X)^*,w^*))$. It follows that $\Lambda_X\circ T=S^*$ for some $S\in\L(\G(B_X),Y)$. 
Hence $S^*\in\L((Y^*,w^*);(\G(B_X)^*,w))$ and, by the Gantmacher--Nakamura Theorem, $S\in\W(\G(B_X),Y)$. Now, $S=T_f$ for some $f\in\HL^{\W}(B_X,Y)$ by Theorem \ref{teo-4-3}. Finally, $T=\Lambda_X^{-1}\circ S^*=\Lambda_X^{-1}\circ (T_f)^*=f^t$, as desired.
\end{proof}

In some cases we also can identify the holomorphic little Lipschitz space $\Hl(B_X,Y)$ with the space of bounded-weak*-to-norm continuous linear operators from $Y^*$ to $\Hl(B_X)$. In its proof we will apply the following criterion for compactness in $\Hl(B_X)$.

\begin{lemma}\label{compact-little}
Let $X$ be a finite dimensional complex Banach space. A set $K\subseteq\Hl(B_X)$ is compact if and only if it is closed, bounded, and satisfies
$$
\lim_{\|x-y\|\to 0}\sup_{f\in K}\frac{|f(x)-f(y)|}{\|x-y\|}=0.
$$
\end{lemma}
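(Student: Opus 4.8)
The plan is to prove this as a variant of the classical Arzelà--Ascoli characterization of compactness, transported to the little Lipschitz setting via the identification of $\Hl(B_X)$ with a subspace of $\G(B_X)^*$. First I would record the standing assumption that $X$ is finite-dimensional, so that $\overline{B}_X$ is compact; this is what makes equicontinuity usable. The key observation is that the displayed condition
$$
\lim_{\|x-y\|\to 0}\sup_{f\in K}\frac{|f(x)-f(y)|}{\|x-y\|}=0
$$
is precisely a uniform version of the defining property of $\Hl(B_X)$: it says the family $K$ is \emph{uniformly} little-Lipschitz. I would then treat the two implications separately.

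For the ``only if'' direction, suppose $K$ is compact. Closedness and boundedness are immediate (compact subsets of a metric space are closed and bounded, and $L(\cdot)$ bounds give boundedness in $\HL(B_X)$). For the uniform little-Lipschitz condition, fix $\varepsilon>0$ and cover $K$ by finitely many $L(\cdot)$-balls $B(f_1,\varepsilon/3),\dots,B(f_m,\varepsilon/3)$ with centres $f_i\in\Hl(B_X)$. Each $f_i$ individually satisfies the little-Lipschitz condition, so there is $\delta_i>0$ with $|f_i(x)-f_i(y)|/\|x-y\|<\varepsilon/3$ whenever $0<\|x-y\|<\delta_i$; take $\delta=\min_i\delta_i$. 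Given any $f\in K$, pick $i$ with $L(f-f_i)<\varepsilon/3$; then for $0<\|x-y\|<\delta$,
$$
\frac{|f(x)-f(y)|}{\|x-y\|}\le L(f-f_i)+\frac{|f_i(x)-f_i(y)|}{\|x-y\|}<\frac{\varepsilon}{3}+\frac{\varepsilon}{3}<\varepsilon,
$$
which gives the claim after taking the supremum over $f\in K$ and letting $\varepsilon\to0$.

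For the ``if'' direction, assume $K$ is closed, bounded, and uniformly little-Lipschitz; I would show $K$ is totally bounded (then compactness follows from closedness, once one knows $\Hl(B_X)$ is complete, which it is as a closed subspace of $\HL(B_X)$). Fix $\varepsilon>0$ and choose $\delta>0$ from the uniform condition so that $|f(x)-f(y)|/\|x-y\|<\varepsilon$ for all $f\in K$ whenever $0<\|x-y\|<\delta$. On the compact set $\overline{B}_X$, the family $K$ is uniformly bounded (by boundedness of $K$ together with $f(0)=0$) and uniformly equicontinuous (Lipschitz with uniformly bounded constants), so by Arzelà--Ascoli the restriction of $K$ to $\overline{B}_X$ is totally bounded in the sup-norm; pick $f_1,\dots,f_m\in K$ so that every $f\in K$ is within sup-norm $\varepsilon\delta$ of some $f_j$ on $\overline{B}_X$. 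The point is now to upgrade sup-norm closeness to $L(\cdot)$-closeness using the two scales: for $\|x-y\|\ge\delta$ one bounds the Lipschitz quotient of $f-f_j$ by $2\|f-f_j\|_\infty/\delta\le 2\varepsilon$, while for $0<\|x-y\|<\delta$ one bounds it by $L(f-f_j\text{-part})\le \varepsilon+\varepsilon=2\varepsilon$ directly from the uniform little-Lipschitz condition applied to both $f$ and $f_j$. Hence $L(f-f_j)\le 2\varepsilon$, proving $K$ is totally bounded.

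The main obstacle is the ``if'' direction, specifically the passage from uniform sup-norm approximation to uniform Lipschitz-norm approximation: this is exactly the standard trick behind characterizing compactness in little Lipschitz / little H\"older spaces, where one splits the difference quotient according to whether $\|x-y\|$ is above or below the threshold $\delta$, controlling the ``far'' regime by $\|\cdot\|_\infty/\delta$ and the ``near'' regime by the equi-little-Lipschitz hypothesis. One must be a little careful that the $\delta$ chosen from the hypothesis and the mesh $\varepsilon\delta$ of the sup-norm net are coupled correctly so that both contributions come out of order $\varepsilon$. Finite-dimensionality of $X$ enters only to guarantee that $\overline{B}_X$ is compact so that Arzelà--Ascoli applies; everything else is formal.
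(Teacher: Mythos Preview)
Your proof is correct, and the ``only if'' direction matches the paper's argument essentially word for word. The ``if'' direction, however, follows a genuinely different route. The paper argues via sequential compactness: given a sequence in $K$, it invokes Montel's theorem (using the holomorphy of the functions and finite-dimensionality of $X$) together with \cite[Lemma 2.5]{JimVil-13} to extract a subsequence converging uniformly on compact subsets of $B_X$ to some $f\in\HL(B_X)$, and then upgrades this convergence to $L(\cdot)$-convergence via the same $\delta$-split trick you use. Your approach instead proves total boundedness directly, replacing Montel by Arzel\`a--Ascoli on $\overline{B}_X$ (implicitly extending the uniformly Lipschitz functions to the closure). The advantage of your argument is that it never uses holomorphy---it would establish the analogous compactness criterion in the plain little Lipschitz space $\lipo(B_X)$---whereas the paper's proof is tied to the holomorphic structure through Montel. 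Both approaches lean on finite-dimensionality in the same way (compactness of $\overline{B}_X$), and both converge on the identical two-regime estimate for the difference quotient. One small point worth making explicit in your write-up: the functions live on the open ball $B_X$, so the application of Arzel\`a--Ascoli on $\overline{B}_X$ requires the (automatic) Lipschitz extension to the closure; this is harmless but should be stated.
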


\begin{proof}
Suppose that $K\subseteq\Hl(B_X)$ is compact. Clearly, $K$ is closed and bounded. Let $\varepsilon>0$ and we can assume that $K\subseteq \cup_{k=1}^n B(f_k,\varepsilon/2)$ with $f_1,\ldots,f_n\in K$ for some $n\in\mathbb{N}$. There exists $\delta>0$ such that 
$$
\frac{|f_k(x)-f_k(y)|}{\|x-y\|}<\frac{\varepsilon}{2}
$$
whenever $x,y\in B_X$ with $0<\|x-y\|<\delta$ and $k\in\{1,\ldots,n\}$. If $f\in K$, we have that $L(f-f_k)<\varepsilon/2$ for some $k\in\{1,\ldots,n\}$ and thus
$$
\frac{|f(x)-f(y)|}{\|x-y\|}\leq L(f-f_k)+\frac{|f_k(x)-f_k(y)|}{\|x-y\|}<\varepsilon
$$
if $x,y\in B_X$ with $0<\|x-y\|<\delta$, and this establishes that the limit in the statement is 0.

Conversely, assume that $K$ is a closed bounded subset of $\Hl(B_X)$ such that 
$$
\lim_{\|x-y\|\to 0}\sup_{f\in K}\frac{|f(x)-f(y)|}{\|x-y\|}=0.
$$
Let $(f_n)$ be a sequence in $K$. By Montel's Theorem and \cite[Lemma 2.5]{JimVil-13} there exists a subsequence $(f_{n_k})$ which converges pointwise on $B_X$ (in fact, uniformly on compact subsets of $B_X$) to some function $f\in\HL(B_X)$. Let $\varepsilon>0$. There is a $\delta>0$ such that for all $k\in\mathbb{N}$,
$$
\frac{|f_{n_k}(x)-f_{n_k}(y)|}{\|x-y\|}<\frac{\varepsilon}{2}
$$
if $x,y\in B_X$ with $0<\|x-y\|<\delta$, taking limits with $k\to\infty$ yields  
$$
\frac{|f(x)-f(y)|}{\|x-y\|}\leq\frac{\varepsilon}{2}
$$
if $x,y\in B_X$ with $0<\|x-y\|<\delta$, and thus 
$$
\frac{|(f_{n_k}-f)(x)-(f_{n_k}-f)(y)|}{\|x-y\|}<\varepsilon
$$
if $x,y\in B_X$ with $0<\|x-y\|<\delta$. On the other hand, since the set 
$$
K_\delta=\{(x,y)\in B_X\times B_X\colon \|x-y\|\geq\delta\}
$$
is compact in $X^2$ with the product topology, there exists $k_0\in\mathbb{N}$ such that if $k\geq k_0$, then 
$$
\left|f_{n_k}(x)-f(x)\right|<\delta\frac{\varepsilon}{2}
$$
for all $x\in B_X$ such that $(x,y)\in K_\delta$ for some $y\in B_X$. Therefore, for all $k\geq k_0$, we have that 
$$
\frac{|(f_{n_k}-f)(x)-(f_{n_k}-f)(y)|}{\|x-y\|}<\varepsilon
$$
if $x,y\in B_X$ and $\|x-y\|\geq\delta$. It follows that $L(f_{n_k}-f)\leq\varepsilon$ for all $k\geq k_0$. By the arbitrariness of $\varepsilon$, we deduce that $\lim_{k\to\infty} L(f_{n_k}-f)=0$. Since $K$ is closed, it follows that $f\in K$. This proves that $K$ is compact.
\end{proof}

\begin{theorem}\label{teo-4-4}
If $X$ is a finite dimensional complex Banach space, then $f\mapsto f^t$ is an isometric isomorphism from $\Hl(B_X,Y)$ onto $\L((Y^*,bw^*);\Hl(B_X))$.
\end{theorem}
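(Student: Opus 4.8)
The plan is to reduce everything to Proposition~\ref{cor-4-1} (which is the same identification, but for $\HL^{\K}(B_X,Y)$ and $\L((Y^*,bw^*);\HL(B_X))$), to the compactness criterion of Lemma~\ref{compact-little}, and to the observation that, in finite dimensions, $\Hl(B_X,Y)$ sits isometrically inside $\HL^{\K}(B_X,Y)$. Throughout one keeps in mind that $\Hl(B_X)$ is a norm-closed subspace of $\HL(B_X)$ and that the inclusion is isometric.

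First I would verify $\Hl(B_X,Y)\subseteq\HL^{\K}(B_X,Y)$ when $\dim X<\infty$. Given $f\in\Hl(B_X,Y)$ and a sequence $\bigl(\tfrac{f(x_n)-f(y_n)}{\|x_n-y_n\|}\bigr)_n$ in $\Im_L(f)$, compactness of $\overline{B}_X$ allows us to assume $x_n\to a$ and $y_n\to b$ in $\overline{B}_X$. If $a=b$, then $\|x_n-y_n\|\to 0$ and the little-Lipschitz property of $f$ forces $\tfrac{f(x_n)-f(y_n)}{\|x_n-y_n\|}\to 0$; if $a\neq b$, then $\|x_n-y_n\|$ is eventually bounded below, and since $f$ is Lipschitz (hence uniformly continuous) it extends continuously to $\overline{B}_X$, so the quotient converges. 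Thus $\Im_L(f)$ is relatively compact in $Y$, i.e. $f\in\HL^{\K}(B_X,Y)$, and the inclusion is isometric because $\|\cdot\|_{\HL^{\K}}=L(\cdot)$ on compact holomorphic Lipschitz maps while $\|\cdot\|_{\Hl}=L(\cdot)$. Since moreover $f^t(y^*)=y^*\circ f\in\Hl(B_X)$ for every $y^*\in Y^*$, Proposition~\ref{cor-4-1} shows that $f\mapsto f^t$ is an injective isometry of $\Hl(B_X,Y)$ into $\L((Y^*,bw^*);\Hl(B_X))$, with $\|f^t\|=L(f)$ by Proposition~\ref{Proposition: transpose properties}.

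The heart of the proof is surjectivity. Given $T\in\L((Y^*,bw^*);\Hl(B_X))$, compose with the isometric inclusion $\Hl(B_X)\hookrightarrow\HL(B_X)$ to regard $T\in\L((Y^*,bw^*);\HL(B_X))$; by Proposition~\ref{cor-4-1} there is $f\in\HL^{\K}(B_X,Y)$ with $f^t=T$, and it remains to show $f\in\Hl(B_X,Y)$. By Theorem~\ref{teo-4-2} the operator $f^t\colon Y^*\to\HL(B_X)$ is compact, so $K:=\overline{f^t(\overline{B}_{Y^*})}$ is compact in $\HL(B_X)$; as $f^t(Y^*)=T(Y^*)\subseteq\Hl(B_X)$ and $\Hl(B_X)$ is closed in $\HL(B_X)$, the set $K$ is in fact a compact subset of $\Hl(B_X)$. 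Lemma~\ref{compact-little} then yields $\lim_{\|x-y\|\to 0}\sup_{g\in K}\tfrac{|g(x)-g(y)|}{\|x-y\|}=0$. For $x\neq y$ in $B_X$, choose by Hahn--Banach a norm-one $y^*\in Y^*$ with $y^*(f(x)-f(y))=\|f(x)-f(y)\|$; then $f^t(y^*)\in K$ and
\[
\frac{\|f(x)-f(y)\|}{\|x-y\|}=\frac{|(f^t(y^*))(x)-(f^t(y^*))(y)|}{\|x-y\|}\leq\sup_{g\in K}\frac{|g(x)-g(y)|}{\|x-y\|}\longrightarrow 0
\]
as $\|x-y\|\to 0$, so $f\in\Hl(B_X,Y)$. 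Since $f^t=T$ and $\|f\|_{\Hl}=L(f)=\|f^t\|$, this completes the proof of the isometric isomorphism.

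I expect the main obstacle to be precisely this last implication: upgrading an $f\in\HL^{\K}(B_X,Y)$ whose transpose is valued in $\Hl(B_X)$ to a genuine element of $\Hl(B_X,Y)$. This is where finite-dimensionality of $X$ is indispensable, via Lemma~\ref{compact-little}, together with the Hahn--Banach device that rewrites the vector-valued difference quotient of $f$ as a supremum over the compact set $f^t(\overline{B}_{Y^*})\subseteq\Hl(B_X)$. Everything else is routine bookkeeping built on Proposition~\ref{cor-4-1} and Theorem~\ref{teo-4-2}.
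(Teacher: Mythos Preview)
Your proof is correct and, for the surjectivity half, matches the paper's argument essentially verbatim: apply Proposition~\ref{cor-4-1} to get some $f\in\HL^{\K}(B_X,Y)$ with $f^t=T$, use Theorem~\ref{teo-4-2} to see that $f^t(\overline{B}_{Y^*})$ is relatively compact in $\Hl(B_X)$, and then invoke Lemma~\ref{compact-little} together with the Hahn--Banach rewriting of $\|f(x)-f(y)\|$ to conclude $f\in\Hl(B_X,Y)$.

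The forward direction is where you diverge. The paper does \emph{not} prove the inclusion $\Hl(B_X,Y)\subseteq\HL^{\K}(B_X,Y)$ by your sequential-compactness argument on $\Im_L(f)$; instead it works on the transpose side, using Lemma~\ref{compact-little} to show that $f^t(\overline{B}_{Y^*})$ is relatively compact in $\Hl(B_X)$ (the uniform-flatness condition there is exactly the little-Lipschitz condition on $f$, via $\sup_{y^*\in\overline{B}_{Y^*}}|f^t(y^*)(x)-f^t(y^*)(y)|=\|f(x)-f(y)\|$), and then combines this compactness with the $w^*$-$w^*$ continuity from Corollary~\ref{teo-4-1} and Schauder's theorem to reach $f^t\in\L((Y^*,bw^*);\HL(B_X))$. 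Your route is more elementary and avoids Lemma~\ref{compact-little} in this direction altogether, at the cost of needing the continuous extension of $f$ to $\overline{B}_X$; the paper's route is more symmetric, using Lemma~\ref{compact-little} for both implications. Either way the finite-dimensionality of $X$ is the essential hypothesis.
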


\begin{proof}
Let $f\in\Hl(B_X,Y)$. We claim that $f^t\in\L((Y^*,bw^*);\Hl(B_X))$. Indeed, note first that 
$$
\frac{\|f(x)-f(y)\|}{\|x-y\|}
=\sup_{y^*\in B_{Y^*}}\frac{|y^*(f(x)-f(y))|}{\|x-y\|}
=\sup_{y^*\in B_{Y^*}}\frac{|f^t(y^*)(x)-f^t(y^*)(y)|}{\|x-y\|}
$$
for every $x,y\in X$ with $x\neq y$. For each $y^*\in B_{Y^*}$, it follows that $f^t(y^*)\in\Hl(B_X)$ with $L(f^t(y^*))\leq L(f)$. Hence $f^t(B_{Y^*})$ is a bounded subset of $\Hl(B_X)$, and since  
$$
\lim_{\|x-y\|\to 0}\sup_{y^*\in B_{Y^*}}\frac{|f^t(y^*)(x)-f^t(y^*)(y)|}{\|x-y\|}=\lim_{\|x-y\|\to 0}\frac{\|f(x)-f(y)\|}{\|x-y\|}=0,
$$
Lemma \ref{compact-little} assures that $f^t(B_{Y^*})$ is relatively compact in $\Hl(B_X)$, that is, $f^t\in\K(Y^*,\Hl(B_X))$. Moreover, $f^t$ belongs to $\L((Y^*,w^*);(\HL(B_X),w^*))$ by Corollary \ref{teo-4-1}. Hence 
$$
\Lambda_X\circ f^t\in\K(Y^*,\G(B_X)^*)\cap\L((Y^*,w^*);(\G(B_X)^*,w^*)).
$$
Then there is an $S\in\L(\G(B_X),Y)$ for which $S^*=\Lambda_X\circ f^t$. 
Thus $S^*\in\K(Y^*,\G(B_X)^*)$ and, by the Schauder Theorem, $S\in\K(\G(B_X),Y)$. Then $f^t=\Lambda_X^{-1}\circ S^*\in\L((Y^*,bw^*);\HL(B_X))$. 
Since $f^t(Y^*)\subseteq\Hl(B_X)$, our claim is proved.

By Proposition \ref{Proposition: transpose properties}, the mapping of the statement is a linear isometry. To prove that it is onto, let $T\in\L((Y^*,bw^*);\Hl(B_X))$. By Proposition \ref{cor-4-1}, there is a $f\in\HL^\K(B_X,Y)$ such that $T=f^t$. We now show $f\in\Hl(B_X,Y)$. Since $T\in\K(Y^*,\HL(B_X))$ by Theorem \ref{teo-4-2} and $T(Y^*)\subseteq\Hl(B_X)$, we deduce that $T\in\K(Y^*,\Hl(B_X))$. Then Lemma \ref{compact-little} yields
\[\begin{split}
\lim_{\|x-y\|\to 0}\frac{\|f(x)-f(y)\|}{\|x-y\|}
&=\lim_{\|x-y\|\to 0}\sup_{y^*\in B_{Y^*}}\frac{|y^*(f(x)-f(y))|}{\|x-y\|}\\
&=\lim_{\|x-y\|\to 0}\sup_{y^*\in B_{Y^*}}\frac{|T(y^*)(x)-T(y^*)(y)\|}{\|x-y\|}=0,
\end{split}\]
and thus $f\in\Hl(B_X,Y)$.  
\end{proof}

We finish this paper with other elementary subclasses of holomorphic Lipschitz maps.

\begin{definition}
A map $f \in \HL(B_X,Y)$ is said to have finite-rank if $\lin(\Im_{L}(f))$ is a finite dimensional subspace of $Y$. The dimension of this space will be denoted by $\HL\text{-} \mathrm{rank}(f)$, and the set of all such maps by $\HL^{\F}(B_X,Y)$.

A map $f\in\HL(B_X,Y)$ is said to be approximable if it is the limit in the Lipschitz norm $L(\cdot)$ of a sequence of finite-rank holomorphic Lipschitz maps of $\HL(B_X,Y)$.

We denote by $\HL^{\F}(B_X,Y)$ and $\HL^{\overline{\F}}(B_X,Y)$ the spaces of finite-rank holomorphic Lipschitz maps and approximable holomorphic Lipschitz maps $f$ from $B_X$ into $Y$ such that $f(0)=0$, respectively.
\end{definition}

Applying both linearization and transposition, we describe finite-rank holomorphic Lipschitz maps as follows.

\begin{theorem}\label{Theorem: linearization finite-dimensional}
Let $f\in\HL(B_X,Y)$. The following assertions are equivalent:
\begin{enumerate}
\item $f\in\HL^{\F}(B_X,Y)$.
\item $T_f\in\F(\G(B_X),Y)$. 
\item $f^t\in\F(Y^*,\HL(B_X))$. 
\end{enumerate}
In such a case, we have 
$$
\HL \text{-} \mathrm{rank}(f) = \mathrm{rank}(T_f)=\mathrm{rank}(f^t).
$$
Moreover, the maps $f \mapsto T_f$ and $f \mapsto f^t$ are isometric isomorphisms from $(\HL^{\F}(B_X,Y),L(\cdot))$ onto $(\F(\G(B_X),Y),\left\|\cdot\right\|)$ and onto $(\F(Y^*,\HL(B_X)),\left\|\cdot\right\|)$, respectively.
\end{theorem}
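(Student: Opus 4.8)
The plan is to establish the chain of equivalences $(i)\Leftrightarrow(ii)\Leftrightarrow(iii)$ and then the isometric identifications, exploiting the linearization from Theorem~\ref{Theorem: linearization A-compact} (or rather the analogous finite-rank bookkeeping underlying it) together with the transposition machinery of Proposition~\ref{Proposition: transpose properties}. First I would record the key structural fact, already used implicitly in the proof of Theorem~\ref{Theorem: linearization A-compact}: the inclusions
$$
\Im_{L}(f)=T_f(\Im_{L}(\delta_X))\subseteq T_f(\overline{B}_{\G(B_X)})\subseteq\overline{\abco}(\Im_{L}(f)).
$$
These give $\lin(\Im_L(f))=\lin(T_f(\overline{B}_{\G(B_X)}))=T_f(\G(B_X))$, since $\overline{B}_{\G(B_X)}$ is absorbing in $\G(B_X)$ and $T_f(\G(B_X))$ is a linear subspace. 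Consequently $\dim\lin(\Im_L(f))=\dim T_f(\G(B_X))=\mathrm{rank}(T_f)$, which simultaneously proves $(i)\Leftrightarrow(ii)$ and the equality $\HL\text{-}\mathrm{rank}(f)=\mathrm{rank}(T_f)$.

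Next, for $(ii)\Leftrightarrow(iii)$ and $\mathrm{rank}(T_f)=\mathrm{rank}(f^t)$, I would invoke Proposition~\ref{Proposition: transpose properties}, which gives $f^t=\Lambda_X^{-1}\circ(T_f)^*$ with $\Lambda_X$ an isometric isomorphism of $\HL(B_X)$ onto $\G(B_X)^*$. A bounded operator has finite rank if and only if its adjoint does, and the two ranks coincide; since precomposition with the isomorphism $\Lambda_X^{-1}$ changes neither the finite-rank property nor the rank, we get $T_f\in\F(\G(B_X),Y)\Leftrightarrow(T_f)^*\in\F(Y^*,\G(B_X)^*)\Leftrightarrow f^t\in\F(Y^*,\HL(B_X))$, with all three ranks equal. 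This closes the equivalence chain and the rank identities.

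Finally, for the isometric isomorphism assertions: by Theorem~\ref{Theorem: linearization} applied with $\A=\F$ (the finite-rank operators form a normed operator ideal with $\|\cdot\|_\F=\|\cdot\|$), we already know $f\mapsto T_f$ is an isometry from $(\F\circ\HL(B_X,Y),\|\cdot\|_{\F\circ\HL})$ onto $(\F(\G(B_X),Y),\|\cdot\|)$; it remains to check that $\F\circ\HL(B_X,Y)=\HL^\F(B_X,Y)$ with matching norms, which follows from the equivalence $(i)\Leftrightarrow(ii)$ just proved together with $\|f\|_{\F\circ\HL}=\|T_f\|=L(f)$ (the norm collapses because $T_f$ is finite-rank, hence its operator-ideal norm is the operator norm, which equals $L(f)$ by \cite[Proposition 2.3 (c)]{Aro24}). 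For $f\mapsto f^t$, Corollary~\ref{teo-4-1} already furnishes an isometry $f\mapsto f^t$ from $\HL(B_X,Y)$ onto $\L((Y^*,w^*);(\HL(B_X),w^*))$ with $\|f^t\|=L(f)$; restricting to the finite-rank maps and using the equivalence $(i)\Leftrightarrow(iii)$ identifies the image with $\F(Y^*,\HL(B_X))$, and the norm is already $L(f)$ on both sides. The main (and only real) obstacle is the first step — pinning down that $\lin(\Im_L(f))$ is exactly $T_f(\G(B_X))$ rather than merely contained in it; this rests on the two-sided inclusion above, which one must extract carefully from \cite[Propositions 2.3 and 2.6]{Aro24}, but everything else is routine ideal bookkeeping.
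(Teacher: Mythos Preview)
Your approach is essentially the paper's: identify $\lin(\Im_L(f))$ with $T_f(\G(B_X))$ for $(i)\Leftrightarrow(ii)$, then invoke $f^t=\Lambda_X^{-1}\circ(T_f)^*$ together with the complete symmetry of $[\F,\|\cdot\|]$ for $(ii)\Leftrightarrow(iii)$. The paper carries out the first step by splitting into two implications via $\lin(f(B_X))=\lin(\Im_L(f))$ and the chain $T_f(\G(B_X))\subseteq\overline{\lin}(f(B_X))$ and $\lin(f(B_X))\subseteq T_f(\G(B_X))$, using that finite-dimensional subspaces are closed; you instead use the sandwich $\Im_L(f)\subseteq T_f(\overline{B}_{\G(B_X)})\subseteq\overline{\abco}(\Im_L(f))$. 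One small wrinkle: your displayed equality $\lin(\Im_L(f))=T_f(\G(B_X))$ does not follow \emph{unconditionally} from that sandwich --- taking spans only gives $\lin(\Im_L(f))\subseteq T_f(\G(B_X))\subseteq\lin(\overline{\abco}(\Im_L(f)))\subseteq\overline{\lin}(\Im_L(f))$, and the outer terms coincide only once one of them is already finite-dimensional. This is precisely why the paper argues the two implications separately; it does not damage your equivalence, but the equality should be asserted under the finite-rank hypothesis, not before it.

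There is, however, a genuine gap in your last paragraph: the surjectivity of $f\mapsto f^t$ onto $\F(Y^*,\HL(B_X))$. The equivalence $(i)\Leftrightarrow(iii)$ only says that $f^t$ has finite rank exactly when $f$ does; it does \emph{not} say that every $T\in\F(Y^*,\HL(B_X))$ arises as some $f^t$. By Corollary~\ref{teo-4-1} the range of $f\mapsto f^t$ on all of $\HL(B_X,Y)$ is exactly $\L((Y^*,w^*);(\HL(B_X),w^*))$, so the image of $\HL^\F(B_X,Y)$ is $\F(Y^*,\HL(B_X))\cap\L((Y^*,w^*);(\HL(B_X),w^*))$. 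For non-reflexive $Y$ this intersection is proper: pick $\phi\in Y^{**}\setminus\kappa_Y(Y)$ and a nonzero $g\in\HL(B_X)$; then $y^*\mapsto\phi(y^*)\,g$ is rank one but not $w^*$-to-$w^*$ continuous, hence not of the form $f^t$. Your sentence ``restricting to the finite-rank maps and using the equivalence $(i)\Leftrightarrow(iii)$ identifies the image with $\F(Y^*,\HL(B_X))$'' is therefore a non-argument. (The paper's own proof does not address this surjectivity either; it proves the three equivalences and the rank identities and leaves the ``Moreover'' clause implicit.)
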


\begin{proof}
First note that  $\lin(\Im_{L}(f))=\lin(f(B_X))$.

$(i)\Rightarrow (ii)$: If $f\in\HL^{\F}(B_X,Y)$, then $\lin(\Im_{L}(f))$ is finite dimensional. Applying \cite[Proposition 2.3 (b)-(c)]{Aro24}, we deduce that
\[\begin{split}
T_f(\G(B_X)) &= T_f(\overline{\lin}(\delta_X(B_X))) \subseteq \overline{T_f(\lin(\delta_X(B_X)))}\\
&= \overline{\lin}(T_f(\delta_X(B_X))) = \overline{\lin}(f(B_X))=\lin(f(B_X)),
\end{split}\]
and thus $T_f\in\F(\G(B_X),Y)$ with $\mathrm{rank}(T_f)\leq\HL \text{-} \mathrm{rank}(f)$.

$(ii)\Rightarrow (i)$: If $T_f\in\F(\G(B_X),Y)$, then
\[\begin{split}
\lin(f(B_X)) &= \lin(T_f(\delta_X(B_X))) = T_f(\lin(\delta_X(B_X)))\\
&\subseteq T_f(\overline{\lin}(\delta_X(B_X))) = T_f(\G(B_X)),
\end{split}\]
and so $f\in\HL^{\F}(B_X,Y)$ with $\HL\text{-}\mathrm{rank}(f)\leq\mathrm{rank}(T_f)$.

$(ii)\Leftrightarrow (iii)$ and $\mathrm{rank}(T_f)=\mathrm{rank}(f^t)$ follow from Proposition \ref{Proposition: transpose properties} and the fact that the operator ideal $[\F,\left\|\cdot\right\|]$ is completely symmetric \cite[4.4.7]{Pie80}.
\end{proof}

An application of Theorems \ref{Theorem: linearization}, \ref{Theorem: dual ideal} and \ref{Theorem: linearization finite-dimensional} yields the following.

\begin{corollary}
We have 
$$
[\HL^{\F},L(\cdot)]
=[\F\circ\HL,\left\|\cdot\right\|_{\F\circ\HL}]
=[\F^{\mathrm{dual}}\circ \HL,\left\|\cdot\right\|_{\F^{\mathrm{dual}}\circ \HL}]
=[\F^{\HL\text{-}\mathrm{dual}},\left\|\cdot\right\|_{\F^{\HL\text{-}\mathrm{dual}}}].
$$ $\hfill\qed$

\end{corollary}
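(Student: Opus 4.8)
The plan is to prove the four-term chain of identities one link at a time, each link being a direct substitution into a theorem already at our disposal.

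\emph{First link: $[\HL^{\F},L(\cdot)]=[\F\circ\HL,\|\cdot\|_{\F\circ\HL}]$.} By Theorem \ref{Theorem: linearization finite-dimensional}, a map $f\in\HL(B_X,Y)$ belongs to $\HL^{\F}(B_X,Y)$ precisely when $T_f\in\F(\G(B_X),Y)$. By Theorem \ref{Theorem: linearization} applied to the operator ideal $[\F,\|\cdot\|]$, this last condition is equivalent to $f\in\F\circ\HL(B_X,Y)$, and in that case $\|f\|_{\F\circ\HL}=\|T_f\|_{\F}=\|T_f\|$. Since $\|T_f\|=L(f)$ by \cite[Proposition 2.3 (c)]{Aro24}, the two $s$-normed holomorphic Lipschitz ideals coincide and the ideal norm on both is the Lipschitz norm $L(\cdot)$.

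\emph{Second link: $[\F\circ\HL,\|\cdot\|_{\F\circ\HL}]=[\F^{\mathrm{dual}}\circ\HL,\|\cdot\|_{\F^{\mathrm{dual}}\circ\HL}]$.} It suffices to observe that $[\F,\|\cdot\|]=[\F^{\mathrm{dual}},\|\cdot\|_{\F^{\mathrm{dual}}}]$ as operator ideals and then compose on the right with $\HL$. By the complete symmetry of the finite-rank operator ideal \cite[4.4.7]{Pie80}, an operator $T\in\L(X,Y)$ has finite rank if and only if $T^*\in\F(Y^*,X^*)$, and when this holds $\|T\|=\|T^*\|$, which is the ideal norm on $\F$ in both cases. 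Hence $\F^{\mathrm{dual}}=\F$ isometrically, and consequently $\F^{\mathrm{dual}}\circ\HL=\F\circ\HL$ with equal norms.

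\emph{Third link: $[\F^{\mathrm{dual}}\circ\HL,\|\cdot\|_{\F^{\mathrm{dual}}\circ\HL}]=[\F^{\HL\text{-}\mathrm{dual}},\|\cdot\|_{\F^{\HL\text{-}\mathrm{dual}}}]$.} This is exactly Theorem \ref{Theorem: dual ideal} specialized to $\A=\F$, which asserts $[\A^{\HL\text{-}\mathrm{dual}},\|\cdot\|_{\A^{\HL\text{-}\mathrm{dual}}}]=[\A^{\mathrm{dual}}\circ\HL,\|\cdot\|_{\A^{\mathrm{dual}}\circ\HL}]$. Chaining the three links gives the assertion.

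I do not anticipate any real obstacle: each step is a substitution into a previously proved statement. The only point requiring a moment's attention is the middle link, namely that finite rank is preserved in \emph{both} directions under transposition and that the operator norm (which is the ideal norm carried by $\F$) is preserved; but this is precisely the complete symmetry of $[\F,\|\cdot\|]$, already invoked in the proof of Theorem \ref{Theorem: linearization finite-dimensional}.
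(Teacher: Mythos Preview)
Your proof is correct and follows the same route the paper indicates: it is precisely the combination of Theorem \ref{Theorem: linearization finite-dimensional}, Theorem \ref{Theorem: linearization}, the complete symmetry $[\F,\|\cdot\|]=[\F^{\mathrm{dual}},\|\cdot\|_{\F^{\mathrm{dual}}}]$, and Theorem \ref{Theorem: dual ideal} that the paper cites without spelling out the details. You have supplied those details accurately, including the norm identifications.
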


Theorem \ref{Theorem: linearization finite-dimensional} shows the connection of an approximable holomorphic Lipschitz map with its linearization:

\begin{corollary}\label{prop2.1.new}
Let $f\in\HL(B_X,Y)$. Then $f\in\HL^{\overline{\F}}(B_X,Y)$ if and only if $T_f\in\overline{\F}(\G(B_X),Y)$. Moreover, the map $f\mapsto T_f$ is an isometric isomorphism from $\HL^{\overline{\F}}(B_X,Y)$ onto $\overline{\F}(\G(B_X),Y)$. $\hfill\qed$
\end{corollary}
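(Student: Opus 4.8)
The plan is to reduce everything to the already-established isometric isomorphism $f \mapsto T_f$ from $(\HL^{\F}(B_X,Y),L(\cdot))$ onto $(\F(\G(B_X),Y),\|\cdot\|)$ of Theorem \ref{Theorem: linearization finite-dimensional}, together with the linearization identity $T_{f+g} = T_f + T_g$ and $\|T_{f-g}\| = L(f-g)$ from \cite[Proposition 2.3 (c)]{Aro24}. First I would observe that $\HL^{\overline{\F}}(B_X,Y)$ is, by definition, the closure of $\HL^{\F}(B_X,Y)$ inside $(\HL(B_X,Y),L(\cdot))$, while $\overline{\F}(\G(B_X),Y)$ is the closure of $\F(\G(B_X),Y)$ inside $(\L(\G(B_X),Y),\|\cdot\|)$.

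The key step is the following equivalence. Suppose $f \in \HL^{\overline{\F}}(B_X,Y)$, so there is a sequence $(f_n)$ in $\HL^{\F}(B_X,Y)$ with $L(f_n - f) \to 0$. Since $T_{f_n} - T_f = T_{f_n - f}$ and $\|T_{f_n} - T_f\| = \|T_{f_n-f}\| = L(f_n-f) \to 0$ by \cite[Proposition 2.3 (c)]{Aro24}, and since each $T_{f_n} \in \F(\G(B_X),Y)$ by Theorem \ref{Theorem: linearization finite-dimensional}, we conclude $T_f \in \overline{\F}(\G(B_X),Y)$. Conversely, if $T_f \in \overline{\F}(\G(B_X),Y)$, pick finite-rank operators $S_n \in \F(\G(B_X),Y)$ with $\|S_n - T_f\| \to 0$; by the surjectivity half of Theorem \ref{Theorem: linearization} (or directly \cite[Proposition 2.3 (c)]{Aro24}) each $S_n = T_{f_n}$ for a unique $f_n \in \HL(B_X,Y)$, and $f_n \in \HL^{\F}(B_X,Y)$ by Theorem \ref{Theorem: linearization finite-dimensional}. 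Then $L(f_n - f) = \|T_{f_n - f}\| = \|S_n - T_f\| \to 0$, so $f \in \HL^{\overline{\F}}(B_X,Y)$. This proves the stated equivalence.

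For the moreover part: the restriction of the isometric isomorphism $f \mapsto T_f$ from $(\HL(B_X,Y),L(\cdot))$ onto $(\L(\G(B_X),Y),\|\cdot\|)$ of \cite[Proposition 2.3 (c)]{Aro24} carries the subspace $\HL^{\overline{\F}}(B_X,Y)$ exactly onto $\overline{\F}(\G(B_X),Y)$ by the equivalence just proved, and it is an isometry on all of $\HL(B_X,Y)$, hence an isometry on this subspace.

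I do not anticipate a genuine obstacle here; this corollary is a routine consequence of the linearization being an isometry that preserves finite rank, so the only care needed is to invoke the right earlier results ($\|T_{f-g}\| = L(f-g)$ and the characterization of finite-rank maps) and to note that both $\HL^{\overline{\F}}$ and $\overline{\F}$ are defined as norm-closures of their finite-rank counterparts, which makes the two closures correspond under the isometry. One minor point worth stating explicitly is that $\overline{\F}(\G(B_X),Y)$ really is a closed subspace of $\L(\G(B_X),Y)$, so that "$T_f$ is a limit of finite-rank operators" is literally the membership condition; this is standard.
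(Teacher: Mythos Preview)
Your proposal is correct and follows exactly the approach the paper intends: the corollary is stated with a $\qed$ and no explicit proof, being presented as an immediate consequence of Theorem \ref{Theorem: linearization finite-dimensional} together with the isometric isomorphism $f\mapsto T_f$ of \cite[Proposition 2.3 (c)]{Aro24}. Your write-up simply spells out the routine closure argument that the paper leaves implicit.
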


Note that $\HL^{\F}(B_X,Y)$ is a linear subspace of $\HL^{\K}(B_X,Y)$. In fact, we have a little more:

\begin{corollary}
Every approximable holomorphic Lipschitz map from $B_X$ to $Y$ is compact holomorphic Lipschitz.
\end{corollary}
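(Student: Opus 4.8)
The plan is to reduce the statement to the linear case via the linearization machinery already established in the excerpt. Specifically, let $f \in \HL^{\overline{\F}}(B_X,Y)$ be an approximable holomorphic Lipschitz map. By Corollary \ref{prop2.1.new}, this is equivalent to $T_f \in \overline{\F}(\G(B_X),Y)$, the space of approximable linear operators. Since $\overline{\F}(\G(B_X),Y) \subseteq \K(\G(B_X),Y)$ — the uniform limit of finite-rank operators is compact — we conclude that $T_f \in \K(\G(B_X),Y)$. Then Theorem \ref{Theorem: linearization A-compact} (applied with $\A = \K$, recalling that $\K_\K = \K$) gives $f \in \HL^{\K}(B_X,Y)$, i.e.\ $f$ is compact holomorphic Lipschitz.

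The key steps, in order, are: (1) invoke Corollary \ref{prop2.1.new} to pass from $f$ approximable to $T_f$ approximable; (2) use the classical inclusion $\overline{\F}(\G(B_X),Y) \subseteq \K(\G(B_X),Y)$; (3) apply Theorem \ref{Theorem: linearization A-compact} (or equivalently Theorem \ref{teo-4-2}) to pass back from $T_f$ compact to $f$ compact holomorphic Lipschitz. One may alternatively phrase the argument entirely at the level of the composition ideals: by Corollary \ref{Corollary: A-compact composition}, $\HL^{\K} = \K \circ \HL$, and the statement then amounts to $\overline{\F} \circ \HL \subseteq \K \circ \HL$, which follows immediately from $\overline{\F} \subseteq \K$ together with the monotonicity of the composition procedure. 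Both routes are short.

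I do not anticipate any real obstacle here, since all the heavy lifting has already been done in the linearization theorems. The only point requiring a moment's care is recalling that for $\A = \K$ (and $\A = \overline{\F}$) one has $\K_\A = \K$, as noted in the excerpt just before Theorem \ref{teo-4-2}; this justifies identifying the $\K$-compact holomorphic Lipschitz maps appearing in Theorem \ref{Theorem: linearization A-compact} with the compact holomorphic Lipschitz maps in the sense of the corollary's statement. Beyond that, the proof is a one-line consequence of the preceding results.
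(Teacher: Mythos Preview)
Your proposal is correct and follows essentially the same route as the paper: linearize, use that approximable linear operators are compact, and then pull back via Theorem \ref{teo-4-2} (equivalently Theorem \ref{Theorem: linearization A-compact}). The only cosmetic difference is that you invoke Corollary \ref{prop2.1.new} directly to obtain $T_f\in\overline{\F}(\G(B_X),Y)$, whereas the paper unpacks that step by taking an approximating sequence $(f_n)$ and using $\|T_{f_n}-T_f\|=L(f_n-f)$ together with the closedness of $\K$.
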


\begin{proof}
Let $f\in\HL^{\overline{\F}}(B_X,Y)$. Then there is a sequence $(f_n)$ in $\HL^{\F}(B_X,Y)$ such that $L(f_n-f)\to 0$ as $n\to\infty$. Since $T_{f_n}\in\F(\G(B_X),Y)$ by Theorem \ref{Theorem: linearization finite-dimensional} and $\left\|T_{f_n}-T_f\right\|=\left\|T_{f_n-f}\right\|=L(f_n-f)$ for all $n\in\mathbb{N}$, we deduce that $T_f\in\K(\G(B_X),Y)$ since $[\K,\|\cdot\|]$ is a closed ideal of $[\L,\|\cdot\|]$, and so $f\in\HL^{\K}(B_X,Y)$ by Theorem \ref{teo-4-2}.
\end{proof}

Let us recall that a Banach space $X$ has the approximation property if given a compact set $K\subseteq X$ and $\varepsilon>0$, there is an operator $T\in\F(X,X)$ such that $\left\|T(x)-x\right\|<\varepsilon$ for every $x\in K$. We refer to \cite[Section 3]{Aro24} for some results on the approximation property of holomorphic Lipschitz maps.

Grothendieck \cite{g} proved that a dual Banach space $X^*$ has the approximation property if and only if given a Banach space $Y$, an operator $S\in\K(X,Y)$ and $\varepsilon>0$, there is an operator $T\in\F(X,Y)$ such that $\left\|T-S\right\|<\varepsilon$. Since $\HL(B_X)$ is a dual space, combining the Grothendieck's result with Theorems \ref{teo-4-2} and \ref{Theorem: linearization finite-dimensional}
, we propose the next characterization.

\begin{corollary}
$\HL(B_X)$ has the approximation property if and only if, for each complex Banach space $Y$, $\varepsilon>0$ and $f\in\HL^{\K}(B_X,Y)$, there exists $g\in\HL^{\F}(B_X,Y)$ such that $L(f-g)<\varepsilon$.
$\hfill\qed$
\end{corollary}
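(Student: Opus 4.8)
The plan is to deduce the statement by translating Grothendieck's characterization of the approximation property of a dual space (the result attributed to \cite{g} just above) through the linearization identity $\HL(B_X)=\G(B_X)^*$, together with the correspondences $f\mapsto T_f$ from Theorems \ref{teo-4-2} and \ref{Theorem: linearization finite-dimensional}. Recall that $\HL(B_X)$ is isometrically a dual space, namely $\HL(B_X)=\G(B_X)^*$ via $\Lambda_X$; hence Grothendieck's theorem applies with $\G(B_X)$ in the role of the predual and reads: $\HL(B_X)$ has the approximation property if and only if for every complex Banach space $Y$, every $S\in\K(\G(B_X),Y)$ and every $\varepsilon>0$ there is $T\in\F(\G(B_X),Y)$ with $\|T-S\|<\varepsilon$.

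For the direct implication, I would assume $\HL(B_X)$ has the approximation property and fix $Y$, $\varepsilon>0$ and $f\in\HL^{\K}(B_X,Y)$. By Theorem \ref{teo-4-2}, $T_f\in\K(\G(B_X),Y)$, so Grothendieck's criterion provides $T\in\F(\G(B_X),Y)$ with $\|T-T_f\|<\varepsilon$; by Theorem \ref{Theorem: linearization finite-dimensional} there is a (unique) $g\in\HL^{\F}(B_X,Y)$ with $T_g=T$. Since $f\mapsto T_f$ is linear and isometric for the Lipschitz norm (\cite[Proposition 2.3 (c)]{Aro24}, or Theorem \ref{Theorem: linearization A-compact}), one gets $L(f-g)=\|T_{f-g}\|=\|T_f-T_g\|<\varepsilon$, which is the desired conclusion.

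For the converse, I would assume the stated approximation property for holomorphic Lipschitz maps and fix $Y$, $S\in\K(\G(B_X),Y)$ and $\varepsilon>0$. By \cite[Proposition 2.3 (c)]{Aro24} there is $f\in\HL(B_X,Y)$ with $T_f=S$, and then $f\in\HL^{\K}(B_X,Y)$ by Theorem \ref{teo-4-2}. The hypothesis yields $g\in\HL^{\F}(B_X,Y)$ with $L(f-g)<\varepsilon$; by Theorem \ref{Theorem: linearization finite-dimensional}, $T_g\in\F(\G(B_X),Y)$, and $\|S-T_g\|=\|T_{f-g}\|=L(f-g)<\varepsilon$. Grothendieck's characterization then shows that $\HL(B_X)=\G(B_X)^*$ has the approximation property, completing the equivalence.

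There is essentially no serious obstacle here; the only points needing a little care are the bookkeeping that identifies $\HL(B_X)$ with $\G(B_X)^*$ (so that Grothendieck's theorem is being applied to the predual $\G(B_X)$) and the remark that every compact operator on $\G(B_X)$ is a linearization $T_f$, which is just the surjectivity of $f\mapsto T_f$ from $\HL(B_X,Y)$ onto $\L(\G(B_X),Y)$ combined with Theorem \ref{teo-4-2}. Because $f\mapsto T_f$ is an \emph{isometry} for the Lipschitz norm, the two approximation conditions match term by term, so the proof is a direct transcription.
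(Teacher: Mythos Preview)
Your proposal is correct and follows essentially the same route as the paper, which simply indicates that the corollary is obtained by combining Grothendieck's characterization of the approximation property for dual spaces with Theorems \ref{teo-4-2} and \ref{Theorem: linearization finite-dimensional}. The only minor remark is that the equivalence $f\in\HL^{\K}(B_X,Y)\Leftrightarrow T_f\in\K(\G(B_X),Y)$ is really Theorem \ref{Theorem: linearization A-compact} (with $\A=\K$) rather than Theorem \ref{teo-4-2}, but the paper itself cites Theorem \ref{teo-4-2} at this point, so your citation matches the authors' own.
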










\end{document}